\newcommand{\GL}{\mathrm{GL}}
\newcommand{\PGL}{\mathrm{PGL}}
\newcommand{\T}{\mathrm{T}}
\newcommand{\F}{\mathbb{F}}
\newcommand\yang[1]{}
\newcommand\bs[1]{\boldsymbol{ #1}}
\newtheorem{definition}{Definition}[section]
\newtheorem{defn}[definition]{Definition}
\newtheorem{thm}[definition]{Theorem}
\newtheorem{prop}[definition]{Proposition}
\newtheorem{lem}[definition]{Lemma}
\newtheorem{cor}[definition]{Corollary}
\newtheorem{conj}[definition]{Conjectures}
\theoremstyle{definition}
\title{Reduced Power Graphs of $PGL_3(\F_q)$}
\author{Yilong Yang}
\date{\today}
\begin{document}
\maketitle
%\tableofcontents

%%%%%%%%%%%%%%%%%%%%
%%%%%%%%%%%%%%%%%%%%
%%%%%%%%%%%%%%%%%%%%

\abstract{
Given a group $G$, let us connect two non-identity elements by an edge if and only if one is a power of another. This gives a graph structure on $G$ minus identity, called the reduced power graph.

In this paper, we shall find the exact number of connected components and the exact diameter of each component for the reduced power graphs of $PGL_3(\F_q)$ for all prime power $q$.
}

\section{Introduction}

%\subsection{Background and Main Results}

Given a group $G$, the \emph{reduced power graph} of $G$ has all non-identity elements of $G$ as vertices, and two distinct vertices are connected by an edge if and only if one is a power of the other. One interesting aspect of the reduced power graph is its connectivity. In particular, Akbari and Ashrafi proposed the following conjecture in 2015.

\begin{conj}[Akbari and Ashrafi,\cite{AA15}]
The reduced power graph of a non-abelian simple group $G$ is connected only if $G$ is isomorphic to some alternating group $A_n$.
\end{conj}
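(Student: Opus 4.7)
The plan is to use the Classification of Finite Simple Groups. As a first reduction, observe that two non-identity elements lie in the same component of the reduced power graph if and only if the maximal cyclic subgroups containing them can be joined by a chain of maximal cyclic subgroups in which each consecutive pair shares a non-identity element. Hence the contrapositive of the conjecture --- disconnection for every non-alternating non-abelian simple $G$ --- reduces to exhibiting, for each such $G$, a maximal cyclic subgroup $C \leq G$ that is \emph{isolated} in the sense that $C \cap C' = \{1\}$ for every other maximal cyclic subgroup $C'$. Any element of such a $C$ then lies in its own connected component of the reduced power graph together with the rest of $C \setminus \{1\}$.

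For the sporadic simple groups (together with the Tits group), the verification is finitary: using the ATLAS, for each of the $26$ groups identify a prime $r$ dividing $|G|$ whose Sylow $r$-subgroup is cyclic with a controllable normalizer, and check isolation of a Sylow $r$-subgroup directly from character-table and fusion data.

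The substantive work is for the groups of Lie type $G(q)$. The key structural input is the classification of maximal tori combined with Zsigmondy's theorem: for each untwisted or twisted family there exists a cyclic maximal torus $T$ whose order involves a cyclotomic factor $\Phi_n(q)$ for a suitable $n$, and --- outside a short explicit exceptional list --- $q^n - 1$ admits a primitive prime divisor $r$. For $\PSL_n(q)$ and $\PGL_n(q)$ one takes $T$ to be the Singer cycle; for the classical types $B$, $C$, $D$, and ${}^2 A_n$ one uses the analogous large tori of order $(q^n \pm 1)/d$; for the exceptional types $G_2$, $F_4$, $E_6$, $E_7$, $E_8$, ${}^2 B_2$, ${}^2 G_2$, ${}^3 D_4$, ${}^2 F_4$ the standard tables supply a cyclic torus of order $\Phi_n(q)$ with $n$ as large as possible. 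To upgrade ``$T$ is isolated at $r$'' to full isolation of $T$, one argues that every non-identity element of $T$ is regular semisimple modulo the centre, so its centraliser lies in $T$; this in turn reduces to showing the corresponding eigenvalues generate the full extension $\F_{q^n}$ rather than a proper subfield, a purely cyclotomic statement.

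The principal obstacle --- and the reason a uniform proof is delicate --- is the exhaustive treatment of small cases. Zsigmondy exceptions (notably $n = 2$ with $q+1$ a power of $2$, and the $(n,q) = (6,2)$ exception), groups of small rank over small fields, and the exceptional isomorphisms $\PSL_2(4) \cong \PSL_2(5) \cong A_5$, $\PSL_2(9) \cong A_6$, $\PSL_4(2) \cong A_8$, $\PSL_3(2) \cong \PSL_2(7)$ must each be addressed individually, either to confirm disconnection in the non-alternating instance or to confirm consistency with the alternating exception in the isomorphic instance. The present paper contributes one concrete piece of this program --- the detailed analysis for $\PGL_3(\F_q)$ --- which already illustrates both the maximal-torus philosophy behind the general attack and the small-case care that the full conjecture demands.
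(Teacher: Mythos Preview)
The statement you are attempting to prove is not a theorem in the paper; it is a \emph{conjecture} that the paper quotes only to report that it is \textbf{false}. Immediately after stating it, the paper writes: ``In a previous work \cite{previous}, we showed that the conjecture above is false.'' There is therefore no ``paper's own proof'' to compare against, and your proposal is aiming to establish a statement already known to be incorrect.

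Consequently the programme you outline cannot possibly be completed. Somewhere in the case analysis --- among the groups of Lie type, the sporadics, or the small exceptional cases --- there is a non-alternating non-abelian simple group whose reduced power graph \emph{is} connected, and for that group no isolated maximal cyclic subgroup $C$ exists. Your sketch treats the Zsigmondy exceptions and small-rank cases as merely tedious bookkeeping, but in fact at least one of them is a genuine obstruction: the torus you would select fails to be isolated, and every maximal cyclic subgroup meets another nontrivially. The present paper does not identify which group furnishes the counterexample (that is in the cited companion paper), but its detailed component analysis for $\PGL_3(\F_q)$ is part of a project to understand connectivity \emph{given} that the conjecture fails, not to confirm it.
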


In a previous work \cite{previous}\yang{cite!!!!}, we showed that the conjecture above is false, and have provided a description about the connected components of the reduced power graphs of $\PGL_n(\F_q)$ and an upper bound for these components. In this paper, we shall find the precise diameter bounds when $n=3$. In particular, we have the following result:

\begin{thm}
\label{MainThm:3}
Let $p$ be the characteristic of a finite field $\F_q$. The reduced power graph of $\PGL_3(\F_q)$ must fall into one of the following cases.
\begin{enumerate}
\item The reduced power graph of $\PGL_3(\F_2)$ has $51$ components, each with diameter $1$. \yang{type 788, type 4, type 5}
\item The reduced power graph of $\PGL_3(\F_3)$ has $321$ components. One has diameter $11$, and $312$ components have diameter $1$. and $8$ components have diameter $2$.  \yang{main, type 8, type 5}
\item If $q\neq 2$ is even and $q-1$ is prime, then the reduced power graph of $\PGL_3(\F_q)$ has $1+q^3(q^3-1)(q+1)(q-3)+\frac{1}{3}q^3(q^2-1)(q-1)$ components. One has diameter $13$, $q^3(q^3-1)(q+1)(q-3)$ components has diameter $1$, and the other $\frac{1}{3}q^3(q^2-1)(q-1)$ components have diameter $1$ if $q=8$, and diameter $2$ otherwise. \yang{main, type 1, type 5 depending on $q=8$ or not.}
\item If $q\neq 2$ is even and $q-1$ is not prime, then the reduced power graph of $\PGL_3(\F_q)$ has $1+\frac{1}{3}q^3(q^2-1)(q-1)$ components. One has diameter $10$, and the other $\frac{1}{3}q^3(q^2-1)(q-1)$ components have diameter $2$. \yang{main, type 5}
\item If $q\neq 3$ is odd and $q-1$ is a prime power (i.e., when $q=9$ or $q$ is a Fermat prime), then the reduced power graph of $\PGL_3(\F_q)$ has $1+\frac{1}{p-1}(q^3-1)(q^3-q)+\frac{1}{3}q^3(q^2-1)(q-1)$ components. One has diameter $12$, $\frac{1}{p-1}(q^3-1)(q^3-q)$ components has diameter $1$, and the other $\frac{1}{3}q^3(q^2-1)(q-1)$ components have diameter $1$ if $q^2+q+1$ is a prime, and diameter $2$ otherwise. \yang{main, type 8, type 5 depending on $q^2+q+1$ is prime or not}
\item If $q\neq 3$ is odd and $q-1$ is not a prime power, then the reduced power graph of $\PGL_3(\F_q)$ has $1+\frac{1}{p-1}(q^3-1)(q^3-q)+\frac{1}{3}q^3(q^2-1)(q-1)$ components. One has diameter $8$, all other components have diameter $2$. \yang{main, type 8, type 5}
\end{enumerate}
\end{thm}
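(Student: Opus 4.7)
The plan is to leverage the classification of conjugacy classes in $\PGL_3(\F_q)$ and the case analysis of connected components already developed in \cite{previous}. That work partitions the nonidentity elements of $\PGL_3(\F_q)$ into several ``types'' according to rational canonical form and the splitting field of the eigenvalues (split semisimple, unipotent of each Jordan shape, rank-$1$ semisimple with an eigenvalue in $\F_{q^2}$, Singer-cycle elements with eigenvalues in $\F_{q^3}$, and various mixed semisimple--unipotent types), and for each type records the orders of the maximal cyclic subgroups containing a generic element of that type. From \cite{previous} I already know which types can possibly be joined by an edge, so the remaining task splits into two finer pieces of analysis: (a) counting the number of components of each type exactly and (b) pinning down the exact diameter of every component.

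For the counting and for the ``small'' components the argument is uniform. A maximal cyclic subgroup $C$ whose nonidentity elements share no edge with any element outside $C$ yields a component that is precisely the set of nonidentity elements of $C$; this component is a clique exactly when $|C|$ is a prime power (since then the generator is a power of every other nonidentity element), and otherwise has diameter $2$ with the generator serving as a universal hub. The number of such components equals the number of $G$-conjugates of $C$, computed by dividing $|G|$ by the order of the normalizer of the containing torus or unipotent subgroup, which produces the explicit formulas $\tfrac{1}{p-1}(q^3-1)(q^3-q)$ and $\tfrac{1}{3}q^3(q^2-1)(q-1)$ appearing in the theorem. Which types are isolated, and whether the clique-vs-diameter-$2$ dichotomy is triggered, is controlled entirely by arithmetic conditions on $q-1$, $q+1$, and $q^2+q+1$, which gives the six-way case split.

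The principal obstacle is pinning down the diameter of the large ``main'' component. For the upper bound I will fix a convenient center --- a regular semisimple element in a split torus of order $q-1$ --- and show type-by-type that every element of the main component is reachable from the center in a bounded number of steps, using repeatedly the fact that if $x$ is a power of $y$ then $\gen{x}\subseteq\gen{y}$ and $x$ is adjacent to every nonidentity element of $\gen{y}$. The eccentricity of this center is computed explicitly, and the diameter then follows by combining eccentricities, sharpened by exhibiting short paths between extremal type pairs. The real difficulty is the matching lower bound: I must produce pairs of elements for which every power-chain between them uses at least the claimed number of edges, which requires ruling out cyclic subgroups of certain intermediate orders, and this in turn hinges on the arithmetic of $p$, $q-1$, $q+1$, and $q^2+q+1$ --- hence the dependence of the main-component diameter on whether $q-1$ is prime, a prime power, or composite. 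The outlier cases $q=2$ and $q=3$ are dispatched by direct computation, since the tiny cyclic-subgroup orders there collapse almost every component to a clique and leave only a small main component that can be checked by hand.
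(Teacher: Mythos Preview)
Your high-level plan matches the paper in spirit (classify by Jordan type, count isolated components via torus normalizers, and treat the pivot component separately), but the two technical cores of the argument are either missing or misdescribed.

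\textbf{Upper bound.} Fixing a single center and bounding eccentricities will not produce the exact diameters $8,10,11,12,13$. The paper does not route through one element; it routes every element to a nearby \emph{pivot matrix} and then bounds the distance between \emph{any two} pivot matrices. When $q-1$ is not a prime power the key step is a factorization lemma: every $X\in\GL_3(\F_q)$ decomposes as $X_1X_2X_3$ with $X_1,X_3$ in the centralizer $S_1$ of a fixed pivot matrix and $X_2$ in $S_2\cup S_3\cup S_J$ (centralizers of two other pivots and one Jordan pivot). This yields a pivot-to-pivot bound of $4$, and hence the diameters $8$ and $10$. Without this factorization you will only recover the weaker bound $8$ from the previous paper and end up with $12$ or worse. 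Note also that your proposed center, a regular split semisimple element, is an LLL matrix; when $q$ is even and $q-1$ is prime those matrices are \emph{isolated} from the pivot component, so in case~(3) your center is not even in the component you are trying to bound.

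\textbf{Lower bound.} Ruling out cyclic subgroups of intermediate orders is not how the paper obtains the matching lower bounds, and I do not see how that idea alone can succeed: the pivot component contains many cyclic subgroups of all the relevant orders, so purely arithmetic obstructions will not force long paths. The paper's mechanism is geometric. Along any path it tracks the eigenspaces (for pivot matrices), invariant line--plane pairs (for LP and LLP matrices), and column space/kernel pairs (for Jordan pivots), proving rigidity lemmas such as: two pivot matrices at distance $\le 3$ must have nested eigenspaces; a path between pivot matrices avoiding Jordan pivots forces identical eigenspaces when $q-1$ is a prime power; a pivot and a Jordan pivot on a short path must be ``compatible'' in a precise incidence sense. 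The lower-bound witnesses are then explicit matrices $A$ and $XAX^{-1}$ with $X$ chosen so that these incidence constraints cannot be satisfied along any short path. Your proposal gives no hint of this eigenspace-tracking machinery, and without it the exact lower bounds are out of reach.
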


\section{Preliminary}

\subsection{Projectively Reduced Power Graph and Pivot Component}

In this paper, we shall study a special graph on $\GL_n(q)$, which has the reduced power graph of $\PGL_n(q)$ as a quotient graph.

\begin{defn}
Given a group $G$, let $Z$ be its center. Then the projectively reduced power graph of $G$ is obtained from the reduced power graph of $G$ by deleting vertices in $Z$. 
\end{defn}

\begin{prop}
Given a group $G$, let $Z$ be its center. Let $\Gamma$ be projective reduced power graph of $G$, and let $\Gamma'$ be the reduced power graph of $G/Z$. If $\Gamma$ is connected, then $\Gamma'$ is connected with the same or less diameter.
\end{prop}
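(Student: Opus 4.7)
My plan is to exhibit the canonical projection $\pi \colon G \to G/Z$ as a surjective graph map from $\Gamma$ onto $\Gamma'$ that cannot increase distances. The first step is vertex-level behavior: a coset $xZ$ is the identity of $G/Z$ precisely when $x \in Z$, so $\pi$ restricts to a well-defined surjection from the vertex set $G \setminus Z$ of $\Gamma$ onto the vertex set $(G/Z) \setminus \{\bar{e}\}$ of $\Gamma'$, and distinct vertices of $\Gamma'$ pull back to disjoint subsets of non-central elements.

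The second step is edge-level behavior. Suppose $u, v \in G \setminus Z$ are adjacent in $\Gamma$, so without loss of generality $v = u^k$ for some integer $k$. Passing to the quotient gives $\bar{v} = \bar{u}^k$. Either $\bar{u} = \bar{v}$ and the edge collapses to a single vertex of $\Gamma'$, or $\bar{u} \neq \bar{v}$ and $\bar{v}$ is still a power of $\bar{u}$, which is exactly the adjacency condition in $\Gamma'$. Thus every edge of $\Gamma$ either collapses or projects to an edge of $\Gamma'$.

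Combining the two steps delivers both connectivity and the diameter bound at once. Given distinct vertices $\bar{a}, \bar{b}$ of $\Gamma'$, use Step 1 to lift them to distinct vertices $a, b$ of $\Gamma$; by connectivity of $\Gamma$ take a shortest path $a = x_0, x_1, \ldots, x_n = b$ of length $n \leq \diam(\Gamma)$; then project via Step 2 to obtain a walk $\bar{a} = \bar{x}_0, \ldots, \bar{x}_n = \bar{b}$ in $\Gamma'$ whose consecutive terms are either equal or adjacent. Deleting repetitions yields a genuine path in $\Gamma'$ of length at most $n$, proving $\diam(\Gamma') \leq \diam(\Gamma)$.

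There is no real obstacle: the argument is the standard observation that a vertex-surjective graph homomorphism cannot increase diameter. The only points that need care are that taking powers commutes with $\pi$ (immediate from $\pi$ being a group homomorphism) and that preimages of distinct vertices of $\Gamma'$ contain distinct elements of $G \setminus Z$ (immediate from cosets being disjoint); both are formal.
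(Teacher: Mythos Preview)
Your proposal is correct and is exactly the argument the paper has in mind: the paper's entire proof is the one-line remark that $\Gamma'$ is a quotient graph of $\Gamma$, and your write-up is simply a careful unpacking of that statement (surjection on vertices, edges map to edges or collapse, hence distances can only shrink). There is nothing to add or correct.
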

\begin{proof}
$\Gamma'$ is a quotient graph of $\Gamma$.
\end{proof}

When $q>2$ is a prime power, we say a matrix $A\in\GL_3(\F_q)$ is a \emph{pivot matrix} if it is diagonalizable over $\F_q$ with exactly two distinct eigenvalues. In the projectively reduced power graph of $G$, the connected component containing a pivot matrix is called a \emph{pivot component}. We also call the corresponding connected component in $\PGL_3(\F_q)$ a \emph{pivot component} as well. In the previous work \cite{previous}, we have shown that for $q\neq 2$, all pivot matrices are connected in the projectively reduced power graph of $\GL_3(\F_q)$. So in particular, when $q\neq 2$, the pivot component of $\GL_3(\F_q)$ or $\PGL_3(\F_q)$ is unique. Furthermore, the precise diameter of all non-pivot components of $\PGL_3(\F_q)$ is already obtained in \cite{previous}. So we only need to figure out the precise diameter of the pivot component.

\subsection{Obstructions and Connections}

We now list some important results from \cite{previous} about obstructions to the connectivity of the reduced power graph, and about certain connection results. We say a matrix $A\in\GL_n(\F_q)$ is a \emph{Jordan pivot matrix} if $A-I$ has rank one and $(A-I)^2=0$.

\begin{prop}[Diagonalizable obstruction]
\label{prop:ConnObstDiag}
Suppose $q>2$ is a power of $2$ with $q-1$ prime, and $2\leq n<q$. Let $A$ be an $n\times n$ diagonalizable matrix with $n$ distinct non-zero eigenvalues over $\F_q$. Then the image of $A$ in the reduced power graph of $\PGL_n(q)$ is trapped in a connected component with $q-2$ vertices and diameter $1$, made of images of non-identity powers of $A$.
\end{prop}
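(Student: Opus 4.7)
The plan is to identify the connected component of $\bar A$ in the reduced power graph of $\PGL_n(\F_q)$ as exactly the set of non-identity powers $\{\bar A,\bar A^2,\dots,\bar A^{q-2}\}$, and to show this set is a clique of $q-2$ vertices. First I would compute the orders of $A$ and $\bar A$. Since $q-1$ is prime, the cyclic group $\F_q^*$ has prime order $q-1$, so every non-identity element of $\F_q^*$ has order exactly $q-1$. Because $A$ has $n\ge 2$ distinct eigenvalues $\lambda_1,\dots,\lambda_n\in\F_q^*$, at most one equaling $1$, the LCM of their orders gives $A$ order $q-1$ in $\GL_n(\F_q)$. For the projective order, $A^k$ is scalar iff all $\lambda_i^k$ coincide, iff every ratio $(\lambda_i/\lambda_j)^k=1$; since a nontrivial such ratio has order $q-1$, this forces $(q-1)\mid k$, so $\bar A$ also has order $q-1$ in $\PGL_n(\F_q)$. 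Hence $\bar A,\bar A^2,\dots,\bar A^{q-2}$ are $q-2$ distinct non-identity vertices, each generating the same cyclic subgroup of prime order, so any two are joined by an edge and the set is a clique of diameter $1$.

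The main work is to show this clique is closed under adjacency. Suppose $\bar B\ne\bar I$ is adjacent to some $\bar A^k$. Either $\bar B\in\langle\bar A^k\rangle=\langle\bar A\rangle$, placing $\bar B$ in the clique, or else $\bar A^k\in\langle\bar B\rangle$, whence $\bar A\in\langle\bar A^k\rangle\subseteq\langle\bar B\rangle$ and $\bar A=\bar B^j$ for some integer $j$. Lift to $A=cB^j$ for some $c\in\F_q^*$, and write the Jordan decomposition $B=B_sB_u$. Since $c^{-1}A$ is semisimple, uniqueness of the Jordan decomposition applied to $B^j=B_s^jB_u^j=c^{-1}A$ forces $B_s^j=c^{-1}A$ and $B_u^j=I$. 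The crux is to show $B_s$ is diagonalizable over $\F_q$: if its characteristic polynomial had an irreducible factor of degree $\ge 2$, there would be an eigenvalue $\mu\notin\F_q$ together with its Galois conjugate $\mu^q\ne\mu$; then $\mu^j$ and $\mu^{qj}$ would both be eigenvalues of $B_s^j$, but $\mu^j\in\F_q$ (as an eigenvalue of the $\F_q$-diagonalizable $c^{-1}A$) gives $\mu^{qj}=\mu^j$, creating a repeated eigenvalue of $B_s^j$ and contradicting that $c^{-1}A$ has $n$ distinct eigenvalues.

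So in a suitable basis $B_s=\mathrm{diag}(\nu_1,\dots,\nu_n)$ with $\nu_i\in\F_q^*$. If $(q-1)\mid j$ then every $\nu_i^j=1$, contradicting the $n\ge 2$ distinct eigenvalues of $B_s^j$; hence $\gcd(j,q-1)=1$, the $j$-th power map is a bijection on $\F_q^*$, and the $\nu_i$ are themselves $n$ distinct elements. Any matrix commuting with a regular semisimple $B_s$ is diagonal in this eigenbasis, and a diagonal unipotent matrix is $I$, so $B_u=I$ and $B=B_s$ has $n$ distinct nonzero eigenvalues in $\F_q$. By the order computation of the first paragraph, $\bar B$ has order $q-1$, and $\langle\bar B\rangle\supseteq\langle\bar A\rangle$ with both of order $q-1$ forces $\bar B\in\langle\bar A\rangle$. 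The main obstacle is precisely this trapping step, namely ruling out a neighbor $\bar B$ whose lift has eigenvalues outside $\F_q$ or a nontrivial unipotent part; the Frobenius-orbit collision and the regular-semisimple centralizer observation, together with the primality of $q-1$, are what make the obstruction work.
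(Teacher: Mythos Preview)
The paper does not actually prove this proposition here; it is imported from the authors' earlier work without argument, so there is no in-paper proof to compare against.

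Your proof is correct and self-contained. The order computation is clean: primality of $q-1$ forces every nontrivial eigenvalue ratio to have order $q-1$, giving $\bar A$ projective order $q-1$ and hence a clique of $q-2$ vertices. The trapping step is the substantive part, and your argument handles it properly. Passing to the multiplicative Jordan decomposition $B=B_sB_u$ and using uniqueness to get $B_s^j=c^{-1}A$ and $B_u^j=I$ is the right move. The Frobenius-orbit observation (an eigenvalue $\mu\notin\F_q$ of $B_s$ would force $\mu^j=(\mu^q)^j$, collapsing two distinct eigenvalues of $B_s$ to one of $c^{-1}A$) correctly rules out non-split semisimple parts, and once $B_s$ is regular semisimple over $\F_q$ the centralizer argument kills $B_u$.

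One small remark: your argument nowhere uses that $q$ is a power of $2$; it only uses that $q-1$ is prime and $n\ge 2$. The hypothesis $n<q$ is just what is needed for a matrix with $n$ distinct eigenvalues in $\F_q^*$ to exist, so the component in question is nonempty. This is not a gap, just an indication that the obstruction holds slightly more generally than stated.
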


\begin{prop}[Irredicible obstruction]
\label{prop:ConnObstIrred}
Let $n$ be prime and $A$ be a matrix over $\F_q$ whose characteristic polynomial is irreducible. Then the image of $A$ in the reduced power graph of $\PGL_n(q)$ is trapped in a connected component with $(q^n-q)/(q-1)$ vertices, made of images of polynomials of $A$ that are not multiples of identity. The diameter of this component is at most $2$.
\end{prop}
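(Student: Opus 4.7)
The plan is to show that the entire connected component of $A$ in the projectively reduced power graph of $\GL_n(\F_q)$ is trapped inside $\F_q[A]^\times$ modulo scalars, and then invoke the elementary fact that the reduced power graph of a cyclic group has diameter at most $2$.

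First, since the characteristic polynomial $p(x)$ of $A$ is irreducible of degree $n$, it equals the minimal polynomial of $A$, so $\F_q[A] \cong \F_q[x]/(p(x)) \cong \F_{q^n}$ is a field sitting inside $M_n(\F_q)$. Consequently every nonzero element of $\F_q[A]$ is invertible, and the centralizer of $A$ in $\GL_n(\F_q)$ is exactly $\F_q[A]^\times \cong \F_{q^n}^\times$.

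Next I would argue that the whole connected component of $A$ (in either the projectively reduced power graph of $\GL_n(\F_q)$ or the reduced power graph of $\PGL_n(\F_q)$) lies inside the image of $\F_q[A]^\times$. Any neighbor $B$ of $A$ satisfies $B = \lambda A^k$ or $A = \lambda B^k$ for some $\lambda \in \F_q^\times$ and some integer $k$, and in either case $A$ and $B$ commute, forcing $B \in \F_q[A]^\times$. Crucially, this property propagates: for any non-scalar $B \in \F_q[A]^\times$, the subfield $\F_q[B]$ lies between $\F_q$ and $\F_q[A] \cong \F_{q^n}$, and since $n$ is prime there are no proper intermediate subfields, so $\F_q[B] = \F_q[A]$ and the centralizer of $B$ is again $\F_q[A]^\times$. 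By induction on path length, every vertex in the component of $A$ lifts to an element of $\F_q[A]^\times$.

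Finally, the image of $\F_q[A]^\times \setminus \F_q^\times I$ in $\PGL_n(\F_q)$ is $(\F_q[A]^\times / \F_q^\times I) \setminus \{\bar I\}$, which has exactly $\tfrac{q^n-1}{q-1} - 1 = \tfrac{q^n - q}{q-1}$ elements and sits inside the cyclic group $\F_{q^n}^\times / \F_q^\times$ of order $(q^n-1)/(q-1)$. In the reduced power graph of any cyclic group, every non-identity element is a power of any generator, so each generator is adjacent to every other non-identity vertex; this yields connectivity of the full set together with the diameter bound of $2$. I expect the main obstacle to be the middle step: cleanly verifying, via the primality of $n$, that the component cannot leak out of $\F_q[A]$, since this is the only place the prime hypothesis is essential, and without it a proper intermediate subfield $\F_q[B] \subsetneq \F_q[A]$ could give $B$ a strictly larger centralizer.
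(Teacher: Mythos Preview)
Your argument is correct. The paper itself does not prove this proposition; it is quoted from the earlier work \cite{previous} in the preliminaries (Section~2.2) without proof, so there is no in-paper argument to compare against. Your route---identifying $\F_q[A]\cong\F_{q^n}$, using primality of $n$ to force $\F_q[B]=\F_q[A]$ for every non-scalar $B\in\F_q[A]$ so that the component cannot escape $\F_q[A]^\times$, and then reading off the vertex count and diameter from the cyclic quotient $\F_{q^n}^\times/\F_q^\times$---is exactly the standard one and is presumably what \cite{previous} does. The one step worth stating a bit more explicitly is why the centralizer of a non-scalar $B\in\F_q[A]$ in $\GL_n(\F_q)$ is again $\F_q[A]^\times$: since $\F_q[B]=\F_q[A]$, the minimal polynomial of $B$ (as a field element, hence also as a matrix) has degree $n$, so $B$ is again regular and its centralizer is $\F_q[B]^\times=\F_q[A]^\times$. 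With that said, the induction on path length goes through cleanly.
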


\begin{prop}[Jordan-type obstruction]
\label{prop:ConnObstJordan}
Consider a finite field $\F_q$ with characteristic $p$. If $2\leq n\leq p$, let $A$ be any matrix over $\F_q$ similar to the $n\times n$ matrix $\begin{bmatrix}\lambda&1&&\\ &\ddots&\ddots&\\&&\ddots&1\\ &&&\lambda\end{bmatrix}$ for some $\lambda\in\F_q^*$. Then the image of $A$ in the reduced power graph of $\PGL_n(q)$ is trapped in a connected component with $p-1$ vertices and diameter $1$, made of images of powers of $A$ that are not multiples of identity.
\end{prop}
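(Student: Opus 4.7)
The plan is to show that in $\PGL_n(\F_q)$ the image $[A]$ has order exactly $p$, so its cyclic subgroup $\langle[A]\rangle$ has $p-1$ nonidentity elements; these form a clique of diameter $1$ in the reduced power graph because $p$ is prime; and no other matrix can be adjacent to any of them.

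First I would write $A = \lambda(I+N)$ with $N = \lambda^{-1}A - I$ nilpotent, $N^{n-1}\ne 0$, $N^n = 0$. Since $n \le p$, the freshman's dream gives $(I+N)^p = I + N^p = I$, hence $A^p = \lambda^p I$ and $[A]^p = [I]$. For $0 \le k < p$, the expansion $(I+N)^k = \sum_{i=0}^{n-1}\binom{k}{i} N^i$ has $I$-coefficient $1$, so two such matrices can be projectively equal only if they are literally equal, which forces $k\equiv j\pmod p$. Therefore $[A]$ has order exactly $p$ in $\PGL_n(\F_q)$, and $\langle[A]\rangle \setminus \{[I]\}$ consists of exactly $p-1$ distinct projective images of non-identity powers of $A$. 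Because $p$ is prime, every nonidentity element of $\langle[A]\rangle$ generates the whole cyclic group and is therefore a power of every other; these $p-1$ vertices form a clique of diameter $1$.

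The main obstacle is the \emph{trapped} clause: ruling out that some $[B] \notin \langle[A]\rangle$ is adjacent to some $[A^k]$ with $1 \le k \le p-1$. If $[B]$ is a power of $[A^k]$, then trivially $[B] \in \langle[A^k]\rangle = \langle[A]\rangle$. The only remaining case is $[A^k] = [B]^m$, i.e.\ $B^m = c A^k$ for some scalar $c$, which forces $B$ to commute with $A^k$. A short leading-order calculation shows $A^k - \lambda^k I$ is again nilpotent of index $n$ (the coefficient of $N$ in $(I+N)^k - I$ is $k\ne 0$ in $\F_q$), so $A^k$ is similar to a single Jordan block of size $n$ and its centralizer equals that of $A$, namely $\F_q[A]^\times = \F_q[N]^\times$. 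Hence $B = a_0 I + a_1 N + \ldots + a_{n-1} N^{n-1}$ with $a_0 \ne 0$, which I factor as $B = a_0(I + M)$ with $M$ a polynomial in $N$ having no constant term.

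Finally, any product of $p$ terms from $M$ has total $N$-degree at least $p \ge n$, so $M^p = 0$; combined again with the freshman's dream, $(I+M)^p = I + M^p = I$, hence $[B]^p = [I]$. The order of $[B]$ in $\PGL_n(\F_q)$ then divides the prime $p$, so it is $1$ or $p$. Order $1$ would make $B$ scalar, hence not a vertex of the projectively reduced power graph; order $p$ forces $\langle[B]\rangle = \langle[A^k]\rangle = \langle[A]\rangle$, as both are groups of order $p$ and one contains the other. Iterating along any path shows the entire component of $[A]$ equals the $(p-1)$-element clique $\langle[A]\rangle \setminus \{[I]\}$, completing the proof.
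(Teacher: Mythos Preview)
The paper does not actually prove this proposition here; it is quoted from the author's earlier work \cite{previous} along with the other obstruction results in Section~2.2, so there is no in-paper argument to compare against.

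Your proof is correct and self-contained. The key steps all check out: writing $A=\lambda(I+N)$ with $N$ nilpotent of index $n\le p$ gives $(I+N)^p=I$ via the vanishing of $\binom{p}{i}$ in characteristic $p$, and comparing the $N^0$ and $N^1$ coefficients (which makes sense because $I,N,\dots,N^{n-1}$ are linearly independent) shows the projective order of $A$ is exactly $p$. For the trapping argument, the crucial point is that for $1\le k\le p-1$ the matrix $A^k-\lambda^kI=\lambda^k\bigl((I+N)^k-I\bigr)$ has leading term $k\lambda^kN$ with $k\ne 0$ in $\F_q$, so it is again nilpotent of index exactly $n$; hence $A^k$ is a single Jordan block and its centralizer is $\F_q[A^k]=\F_q[N]$. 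Any $B$ with $B^m=cA^k$ then lies in $\F_q[N]^\times$, and the same freshman's-dream computation forces $[B]^p=[I]$, pinning $[B]$ inside $\langle[A]\rangle$. The induction along a path is the right way to conclude.

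One stylistic remark: you could streamline the order-$p$ argument for $[B]$ by noting directly that $B=a_0(I+M)$ with $M\in N\F_q[N]$, hence $M^n=0$ and $M^p=0$; you do this, but the phrase ``any product of $p$ terms from $M$'' slightly obscures that $M$ is a single element and one simply uses $M=N\cdot(\text{unit})$.
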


\begin{prop}[Quasi-diagonalizable obstruction when $q=2$]
\label{prop:ConnObstDiag2}
Suppose we have a prime $p_0=2^{p_1}-1$ (i.e., Mersenne prime) for some positive integer $p_1$. 
Suppose $n\leq p_0$. 
Let $A$ be an $n\times n$ non-identity matrix such that $A^{p_0}=I$, and its minimal polynomial equal to its characteristic polynomial. Then the $A$ in the reduced power graph of $\GL_n(2)$ is trapped in a connected component with $p_0-1$ vertices and diameter $1$, made of non-identity powers of $A$.

Note that in this case, $p_1$ must be a prime factor of $n$ or $n-1$.
\end{prop}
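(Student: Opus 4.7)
The plan is to combine the non-derogatory centralizer description with a CRT analysis of $\F_2[A]$. First, since $p_0$ is prime and $A\neq I$ with $A^{p_0}=I$, the order of $A$ in $\GL_n(\F_2)$ is exactly $p_0$, so the non-identity powers $\{A,A^2,\ldots,A^{p_0-1}\}$ form a set of size $p_0-1$. Because $p_0$ is prime, every such power generates $\langle A\rangle$, so any two of them are mutually powers of each other; this gives a clique of diameter $1$ in the reduced power graph. It therefore suffices to show that any $B\in\GL_n(\F_2)$ adjacent to some $A^i$ already lies in $\langle A\rangle$.

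If $B$ is a power of $A^i$, then $B\in\langle A\rangle$ trivially, so assume instead $B^k=A^i$ for some $k\geq 1$. Then $B$ commutes with $A^i$, and since $\gcd(i,p_0)=1$ makes $A$ a power of $A^i$, $B$ commutes with $A$. Because $A$ is non-derogatory (min poly equals char poly of degree $n$), its centralizer in $M_n(\F_2)$ is $\F_2[A]$, so $B\in\F_2[A]$. The min poly $m(x)$ of $A$ divides $x^{p_0}-1$, and over $\F_2$ we have $x^{p_0}-1=(x-1)\prod_\ell f_\ell(x)$ with each $f_\ell$ irreducible of degree $p_1$: this is because the multiplicative order of $2$ modulo the prime $p_0=2^{p_1}-1$ is exactly $p_1$. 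Since $x^{p_0}-1$ is separable over $\F_2$ (as $p_0$ is odd), $m$ is squarefree, so writing $m=(x-1)^\epsilon\prod_{j\in J}f_j$ with $\epsilon\in\{0,1\}$, CRT gives an isomorphism $\F_2[A]\cong\F_2^{\epsilon}\times\prod_{j\in J}\F_{2^{p_1}}$.

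In each field factor $\F_{2^{p_1}}$, the projection $\alpha_j$ of $A$ is a root of the corresponding $f_j$, hence a primitive $p_0$-th root of unity; but $\lvert\F_{2^{p_1}}^*\rvert=2^{p_1}-1=p_0$, so $\alpha_j$ actually generates the whole multiplicative group. The projection $B_j$ of $B$ satisfies $B_j^k=\alpha_j^i\neq 0$ in the cyclic group of prime order $p_0$, which forces $\gcd(k,p_0)=1$ and uniquely determines $B_j=\alpha_j^c$ with $c:=ik^{-1}\bmod p_0$. The integer $c$ is the same across every factor (and also gives $B_j=1=\alpha_j^c$ in the trivial factor), so $B=A^c\in\langle A\rangle$, completing the proof that the component of $A$ is precisely the clique of $p_0-1$ non-identity powers. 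For the final divisibility claim, $n=\deg m=\epsilon+p_1\lvert J\rvert$ with $\epsilon\in\{0,1\}$ immediately yields $p_1\mid n$ or $p_1\mid n-1$.

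The main technical obstacle is the rigidity step: showing that a \emph{single} exponent $c$ governs the projection of $B$ in every CRT factor simultaneously. This hinges on the Mersenne identity $\lvert\F_{2^{p_1}}^*\rvert=p_0$, which makes each $\alpha_j$ a generator rather than merely a $p_0$-th root of unity in a larger group; without this, the relation $\alpha_j^{ck}=\alpha_j^i$ would determine $c$ only modulo the order of $\alpha_j$ within $\F_{2^{p_1}}^*$, and the exponents across factors could disagree, allowing spurious adjacencies outside $\langle A\rangle$.
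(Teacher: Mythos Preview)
The paper does not supply its own proof of this proposition; it is quoted from the companion paper \cite{previous}, so there is no in-paper argument to compare against.  Your proof is correct and self-contained.  The essential points---that the centralizer of a non-derogatory matrix is $\F_2[A]$, that $x^{p_0}-1$ splits over $\F_2$ into $x-1$ times irreducibles of degree $p_1$ (since $2$ has order exactly $p_1$ modulo the prime $p_0=2^{p_1}-1$), and that in each CRT factor $\F_{2^{p_1}}$ the equation $B_j^k=\alpha_j^i$ forces a \emph{single} exponent $c\equiv ik^{-1}\pmod{p_0}$ because $\alpha_j$ generates the full unit group of order $p_0$---are all handled cleanly.  One tiny remark: the final sentence of the proposition asserts that $p_1$ is a \emph{prime} factor of $n$ or $n-1$; your degree count gives $p_1\mid n$ or $p_1\mid n-1$, and you implicitly use that $p_1$ is prime when computing the order of $2$ modulo $p_0$, but you might state explicitly the standard fact that a Mersenne prime $2^{p_1}-1$ forces $p_1$ itself to be prime.
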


\begin{prop}[Extra irredicible obstruction when $q=2$]
\label{prop:ConnObstIrred2}
Let $n-1$ be prime and $A$ be an $n\times n$ matrix over $\F_2$ whose characteristic polynomial has an irreducible factor of degree $n-1$. Then the image of $A$ in the reduced power graph of $\PGL_n(2)$ is trapped in a connected component with $2^{n-1}-2$ vertices, made of non-identity powers of a matrix $C\in\PGL_n(2)$. The diameter of this component is at most $2$.
\end{prop}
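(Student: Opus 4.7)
The plan is to identify the connected component of $A$ explicitly as $\gen{C} \setminus \{I\}$ for a specific element $C \in \GL_n(\F_2) = \PGL_n(\F_2)$.

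First I would normalize $A$. Since $A$ is invertible over $\F_2$, the only invertible linear polynomial available is $x+1$, so the characteristic polynomial of $A$ factors as $(x+1) p(x)$ with $p$ irreducible of degree $n-1$. Coprimality of the two factors (note $p(1) \neq 0$, since $p$ has no roots in $\F_2$) gives a direct sum decomposition $\F_2^n = V_1 \oplus V_{n-1}$ on which $A$ acts as $1 \oplus M$, with $M$ of characteristic polynomial $p$. Identifying $V_{n-1}$ with $\F_{2^{n-1}}$ makes $M$ multiplication by a root $\lambda$ of $p$. Let $\gamma$ generate the cyclic group $\F_{2^{n-1}}^*$ and set $C := 1 \oplus \gamma$, which has order $2^{n-1}-1$. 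Writing $\lambda = \gamma^k$ exhibits $A = C^k$, so $A$ and $C$ lie in the same component.

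The heart of the proof is to show that every vertex in the component of $A$ lies in $\gen{C}$. Traversing a path in the reduced power graph alternates between taking powers and taking roots; powers stay in $\gen{C}$ automatically, so it suffices to establish: whenever $X \in \gen{C} \setminus \{I\}$ and $B^j = X$ for some $B \in \GL_n(\F_2)$ and some $j \geq 1$, then $B \in \gen{C}$. Write $X = 1 \oplus \mu$ with $\mu \in \F_{2^{n-1}}^* \setminus \{1\}$. Primality of $n-1$ guarantees that $\F_{2^{n-1}}$ has no proper intermediate subfield, so $\mu$ generates $\F_{2^{n-1}}$ over $\F_2$. Consequently, $V_1 = \Ker(X-I)$ and $V_{n-1} = \Im(X-I)$ are intrinsic to $X$ and thus preserved by any matrix commuting with $X$, in particular by $B$. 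On the one-dimensional $V_1$, $B$ must act as the unique unit in $\F_2$, namely $1$; on $V_{n-1}$, $B$ commutes with multiplication by the field generator $\mu$, so $B|_{V_{n-1}}$ is itself multiplication by some $\nu \in \F_{2^{n-1}}^*$. Thus $B = 1 \oplus \nu$, and as above $\nu$ is either $1$ (forcing $B=I$) or a field generator, in which case $B \in \gen{C}$.

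The conclusion is immediate: the component of $A$ coincides with $\gen{C} \setminus \{I\}$, a set of exactly $2^{n-1}-2$ vertices, each a non-identity power of $C$, and the diameter is at most $2$ because every vertex is adjacent to $C$. The main obstacle will be the centralizer step: it relies twice on the primality of $n-1$, first to force every non-identity element of $\F_{2^{n-1}}^*$ to be a field generator (so that the kernel/image decomposition is intrinsic to $X$), and second to rule out intermediate subfields that could harbor a rogue $\nu$ and let $B$ escape $\gen{C}$.
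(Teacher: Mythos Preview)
Your argument is correct. Note that the paper does not actually prove this proposition here; it is imported from the companion paper \cite{previous}, so there is no in-paper proof to compare against. Your route---normalizing $A$ to $1\oplus M$, realizing $M$ as multiplication by an element of $\F_{2^{n-1}}$, taking $C=1\oplus\gamma$ with $\gamma$ a generator of $\F_{2^{n-1}}^*$, and then showing $\langle C\rangle\setminus\{I\}$ is closed under taking roots via a centralizer computation---is the natural one and goes through cleanly.

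One small clarification on your final paragraph: primality of $n-1$ is really invoked only once, not twice. It guarantees that every non-identity $\mu\in\F_{2^{n-1}}$ satisfies $\F_2(\mu)=\F_{2^{n-1}}$, which is what forces the centralizer of $X|_{V_{n-1}}$ in $\GL(V_{n-1})$ to collapse to $\F_{2^{n-1}}^*$. Once you know $\nu\in\F_{2^{n-1}}^*$ you are finished immediately, because $\gamma$ was chosen to generate the \emph{multiplicative group} $\F_{2^{n-1}}^*$, so $\nu=\gamma^m$ and $B=C^m$ regardless of whether $\nu$ itself is a field generator. Your closing phrase ``a field generator, in which case $B\in\langle C\rangle$'' slightly obscures this; no second appeal to primality is needed there.
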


\begin{prop}
\label{Prop:Pivot3}
For $q\neq 2$ and $n=3$, any two pivot matrices in the projectively reduced power graph of $\GL_n(\F_q)$ will have distance at most $8$.
\end{prop}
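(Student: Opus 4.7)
My plan is to associate to each pivot matrix its eigenspace decomposition $V = L_A \oplus H_A$, where $L_A$ is the simple $1$-dimensional eigenspace and $H_A$ is the double $2$-dimensional eigenspace, so that $A = a P_{H_A} + b P_{L_A}$ for distinct nonzero scalars $a, b \in \F_q^*$. I will show that every pivot lies within $2$ edges of a small family of canonical pivot matrices, and that any two canonical pivots are within $4$ edges of each other; this yields the target bound as the sum $2+4+2=8$.

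For the reduction step, I fix a generator $g$ of $\F_q^*$ and, for each decomposition $(L,H)$, designate two canonical pivot hubs $C_1(L,H) := gP_H + P_L$ and $C_2(L,H) := P_H + gP_L$. Given a pivot $A = aP_H + bP_L$, the power $A^{\mathrm{ord}(a)} = P_H + b^{\mathrm{ord}(a)} P_L$ is a non-identity matrix whenever $\mathrm{ord}(b) \nmid \mathrm{ord}(a)$, and in that case it is automatically a power of $C_2(L,H)$. Symmetrically, $A^{\mathrm{ord}(b)} = a^{\mathrm{ord}(b)} P_H + P_L$ is a non-identity power of $C_1(L,H)$ when $\mathrm{ord}(a) \nmid \mathrm{ord}(b)$. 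So outside the degenerate case $\mathrm{ord}(a) = \mathrm{ord}(b)$, the matrix $A$ lies within two edges of a canonical hub of its own decomposition; the degenerate case will be handled by a separate hub of the form $gP_H + g^s P_L$ or by routing $A$ through a regular diagonal matrix of which it is a power.

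For the bridging step, I call decompositions $(L_1, H_1)$ and $(L_2, H_2)$ \emph{compatible} when $L_1 \subset H_2$, $L_2 \subset H_1$, and $H_1 \ne H_2$; then $\ell := H_1 \cap H_2$ is a line and $L_1, L_2, \ell$ span $V$, so there exists a regular diagonalizable matrix $D$ with precisely these three eigenlines. By choosing the three eigenvalues $d_1, d_2, d_3$ of $D$ carefully, a suitable power $D^r$ coincides with a power of $C_j(L_1, H_1)$ (by arranging $d_1^r = 1$ and $d_2^r = d_3^r \ne 1$) and another power $D^{r'}$ coincides with a power of $C_j(L_2, H_2)$. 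This gives the path $C_j(L_1, H_1) \to D^r \to D \to D^{r'} \to C_j(L_2, H_2)$ of length $4$. For a general pair of pivots $A, B$ whose decompositions are not already compatible, a short incidence argument in $\mathbb{P}^2(\F_q)$ shows that there is always a decomposition compatible with both, reducing to the compatible case.

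Concatenating the three steps gives the target $2+4+2 = 8$. The main obstacle is the degenerate case $\mathrm{ord}(a) = \mathrm{ord}(b)$ in the reduction step, where neither default canonical hub is reachable by a direct power and one must perform a longer detour without overshooting the $2$-edge budget. A secondary technical point is verifying that the eigenvalues $d_1, d_2, d_3$ of the bridging matrix $D$ can always be chosen to satisfy the requisite order and distinctness constraints simultaneously for every $q \ne 2$; this shrinks to a number-theoretic check on subgroup orders in $\F_q^*$, which is most delicate when $q$ is small and must be carried out in parallel with the geometric incidence argument guaranteeing a compatible intermediate decomposition.
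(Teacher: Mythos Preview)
This proposition is not proved in the present paper; it is quoted from the companion work \cite{previous} in the preliminary section, so there is no argument here to compare against directly.

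On its own terms, your outline has a structural budget problem in the bridging step. An LLL matrix $D$ with eigenlines $\ell_1,\ell_2,\ell_3$ has only three possible pivot powers, with decompositions $(\ell_i,\mathrm{span}(\ell_j,\ell_k))$; so if two distinct pivot decompositions $(L_1,H_1)$ and $(L_2,H_2)$ are both reachable as powers of the same $D$, one is forced into exactly your compatibility condition $L_1\subset H_2$, $L_2\subset H_1$, with $\ell_3=H_1\cap H_2$. For a generic pair of pivots this fails (a random line lies in a random plane with probability about $1/q$). Your remedy---insert an intermediate decomposition $(L,H)$ compatible with both ends---then requires \emph{two} length-$4$ bridges glued at $\mathrm{hub}(L,H)$, and the total becomes $2+4+4+2=12$. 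The phrase ``reducing to the compatible case'' hides this doubling; nothing in the proposal recovers the missing four edges. One can check directly that no choice of $C_1$ versus $C_2$ at the two ends makes compatible hubs sit at distance $2$ via a common LLL root, so the $4$ in your bridge is genuinely tight.

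The reduction step has a parallel looseness. In the degenerate case $\mathrm{ord}(a)=\mathrm{ord}(b)$ you fall back on hubs $gP_H+g^sP_L$ with $s$ varying, so the canonical family is no longer two matrices per decomposition but an entire coset's worth; the bridging step would then have to connect an arbitrary member of one family to an arbitrary member of another, which you have not argued. For orientation, the related Corollary~\ref{Cor:n3qNotPowerPivotDistance} in this paper (a sharper bound under the extra hypothesis that $q-1$ is not a prime power) does \emph{not} rely solely on LLL bridges: it routes through a factorization $X=X_1X_2X_3$ in $\GL_3(\F_q)$ with $X_2$ allowed to lie in the centralizer of a Jordan pivot as well. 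That Jordan-type route is precisely what covers the geometric configurations your LLL bridge cannot reach, and any repair of your argument will almost certainly need a second bridging mechanism of that kind.
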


\begin{prop}
\label{Prop:PivotJordan}
For $q\neq 2$ and $n\geq 3$ or for $q=2$ and $n\geq 4$, any two Jordan pivot matrices in the projectively reduced power graph of $\GL_n(\F_q)$ will have distance at most $8$.
\end{prop}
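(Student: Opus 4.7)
The plan is to exhibit, for any Jordan pivot $J = I + uv^T$ (with $v^T u = 0$) in $\GL_n(\F_q)$, a family of short paths to ``semisimple hubs'' in the projective reduced power graph. Given any $D \in \GL_n(\F_q)$ that commutes with $J$ and has order $d > 1$ coprime to $p = \mathrm{char}(\F_q)$, set $C = JD$. Since $J$ and $D$ commute, $C^{m_0} = J$ and $C^p = D^p$ for some integer $m_0$ with $m_0 \equiv 1 \pmod{p}$ and $m_0 \equiv 0 \pmod{d}$ (which exists by the Chinese Remainder Theorem). Hence both $[J]$ and $[D^p]$ are powers of $[C]$ in $\PGL_n(\F_q)$, producing the length-$2$ path $[J] \sim [C] \sim [D^p]$. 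We call $[D^p]$ a hub for $[J]$.

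Given two Jordan pivots $A = I + u_A v_A^T$ and $B = I + u_B v_B^T$, the strategy is to find a single matrix $D$ commuting with both $A$ and $B$, so that the corresponding hubs coincide at $[D^p]$ and we obtain $d([A],[B]) \leq 4$ via $[A] \sim [AD] \sim [D^p] \sim [BD] \sim [B]$. Since a matrix commutes (up to absorbing a scalar) with $I + uv^T$ iff $Du = u$ and $v^T D = v^T$, the required $D$ must fix $u_A$ and $u_B$ pointwise and preserve both $\ker v_A^T$ and $\ker v_B^T$. In the \emph{generic} configuration where $v_A^T u_B \neq 0$ and $v_B^T u_A \neq 0$, a direct calculation yields $\langle u_A, u_B\rangle \cap \ker v_A^T \cap \ker v_B^T = 0$, so $\F_q^n = \langle u_A, u_B\rangle \oplus W$ with $W := \ker v_A^T \cap \ker v_B^T$ of dimension $n-2$. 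One then takes $D$ acting as the identity on $\langle u_A, u_B\rangle$ and as an element of $\GL(W)$ of order coprime to $p$ on $W$: for $q > 2$ with $n \geq 3$, a non-identity scalar on a line of $W$ suffices; for $q = 2$ with $n \geq 4$, an order-$3$ element of $\GL_2(\F_2)$ acting on a $2$-dimensional subspace of $W$ suffices. These constructions cover exactly the hypotheses of the proposition and explain why $q = 2, n = 3$ is excluded (then $\dim W = 1$ and $\GL_1(\F_2)$ is trivial).

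When the generic configuration fails---for instance, when $v_B^T u_A = 0$ and $v_A^T u_B = 0$, so that $\langle u_A, u_B\rangle$ meets $\ker v_A^T \cap \ker v_B^T$ non-trivially---we insert an intermediate Jordan pivot $J = I + u_J v_J^T$ chosen so that both pairs $(A,J)$ and $(J,B)$ are in generic configuration: $v_A^T u_J$, $v_J^T u_A$, $v_B^T u_J$, $v_J^T u_B$ are all nonzero, and $v_J^T u_J = 0$. Applying the previous bound to each pair and using the triangle inequality gives $d([A],[B]) \leq d([A],[J]) + d([J],[B]) \leq 4 + 4 = 8$.

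The main obstacle is verifying the existence of the intermediate $J$ in every degenerate configuration. The constraints on $u_J$ amount to avoiding the union of the two hyperplanes $\ker v_A^T, \ker v_B^T$, those on $v_J$ symmetrically avoid two hyperplanes in the dual, and the Jordan-pivot condition $v_J^T u_J = 0$ is a single linear equation; counting shows the complement has size at least $q^{n-2}(q-1)^2$, so valid $(u_J, v_J)$ exist under the stated hypotheses $(n \geq 3, q \neq 2)$ or $(n \geq 4, q = 2)$. A careful case split is still needed when $u_A \parallel u_B$ or $v_A \parallel v_B$ (the former forcing $B$ to be a power of $A$ via a scalar adjustment and reducing to distance $1$), and in the small-dimensional case $n = 4, q = 2$ one must confirm by explicit construction that the centralizer of $\{A, J\}$ contains an order-$3$ element---which is where constructing $D$ with an irreducible $2\times 2$ block in $\GL_2(\F_2)$ pays off. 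The bound $8$ is tight in the most degenerate subcases and parallels the pivot-matrix bound of Proposition~\ref{Prop:Pivot3}.
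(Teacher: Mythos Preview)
The paper does not prove Proposition~\ref{Prop:PivotJordan}; it is imported from the earlier work~\cite{previous} along with Proposition~\ref{Prop:Pivot3} and the obstruction results, and is used here only as a black box. So there is no in-paper argument to compare your approach against.

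That said, your sketch is a sound route to the bound and would go through with minor repairs. Two points deserve tightening. First, your claim that $u_A\parallel u_B$ forces $[B]$ to be a power of $[A]$ is not correct as stated: you would also need $v_A\parallel v_B$, and even then the combined scaling factor must lie in the prime field $\F_p$ for $B$ to be an honest power of $A$. This is not fatal, since the case $u_A\parallel u_B$ (or $v_A\parallel v_B$) with the other pair independent is still absorbed by your intermediate-$J$ construction---the constraints on $(u_J,v_J)$ only simplify. Second, the centralizer characterization is $Du=\lambda u$ and $v^TD=\lambda v^T$ for some $\lambda\in\F_q^*$, not literally $\lambda=1$; your parenthetical ``up to absorbing a scalar'' gestures at this, but it should be made explicit. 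Finally, your existence count for the intermediate $J$ is a bit loose when $q=2$: the correct lower bound for valid $v_J$ (given a valid $u_J$) comes from inclusion--exclusion on the two sub-hyperplanes of $\{v:v^Tu_J=0\}$ and is $q^{n-3}(q-1)^2$, which is positive precisely when $n\geq 3$ (any $q$) but only useful once $\dim W=n-2\geq 2$ in the $q=2$ case---consistent with your observation that $\GL_1(\F_2)$ is trivial.
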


\subsection{Other useful results}

We now list some important results that will be useful here. Proofs and references will be found in \cite{previous}.

\begin{lem}
\label{Lem:ConseqPP}
If $q,q-1$ are both prime powers, then either $q$ is Fermat prime, or $q-1$ is a Mersenne prime, or $q=9$.
\end{lem}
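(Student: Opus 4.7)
My plan is to reduce the problem to Catalan's equation and apply Mihailescu's theorem.

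Write $q = p^a$ and $q-1 = r^b$ for primes $p, r$ and positive integers $a, b$. Since $q$ and $q-1$ are consecutive and thus coprime, we have $p \neq r$. Because one of two consecutive integers must be even, exactly one of $p, r$ equals $2$. This gives two main cases, which I would treat in parallel.

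In the first case, suppose $p = 2$, so $q = 2^a$ and $q - 1 = r^b$ is an odd prime power. If $b = 1$, then $q - 1 = 2^a - 1$ is a prime, which is the very definition of a Mersenne prime, and we are done. If $b \geq 2$, then (since $a \geq 2$ as well, because $q \geq 4$ in this subcase) the equation $2^a - r^b = 1$ is a Catalan equation with all exponents at least $2$; by Mihailescu's theorem, the only positive-integer solution of $x^m - y^n = 1$ with $x, y, m, n \geq 2$ is $3^2 - 2^3 = 1$, and this does not have the form $2^a - r^b = 1$, so no solution exists.

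In the second case, suppose $p$ is odd, so $q$ is odd and hence $q - 1$ is even, forcing $r = 2$ and $q = p^a = 2^b + 1$. If $a = 1$, then $q$ is an odd prime of the form $2^b + 1$. Here I would invoke the standard elementary fact that if $2^b + 1$ is prime then $b$ must itself be a power of $2$ (otherwise $b$ has an odd factor $d > 1$, and $2^{b/d} + 1$ divides $2^b + 1$ properly), making $q$ a Fermat prime. If instead $a \geq 2$, then $p^a - 2^b = 1$ with $a \geq 2$, and one checks $b \geq 2$ as well (since $p^a \geq 9$ rules out $b = 1$); Mihailescu's theorem then forces $p^a = 9$ and $2^b = 8$, giving $q = 9$.

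I expect no serious obstacle: the argument is essentially a case split on parity together with one invocation of Mihailescu's theorem. The only subtlety is double-checking the boundary conditions that let us apply Mihailescu (both exponents at least $2$), and handling the easy Fermat-exponent lemma separately when $a = 1$.
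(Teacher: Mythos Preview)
Your argument is correct. The reduction to Catalan's equation via the parity split, followed by Mihailescu's theorem in the subcases where both exponents are at least $2$, together with the elementary Fermat-exponent lemma when $a=1$, handles every case cleanly. The boundary checks you flag (ensuring $a,b\geq 2$ before invoking Mihailescu) go through as you say.

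The paper does not actually prove this lemma; it is quoted from a companion reference \cite{previous}, so there is no in-paper argument to compare against. Your route via Mihailescu is the standard modern proof of this classical fact about consecutive prime powers, and is likely what the reference contains as well (or an equivalent argument predating Mihailescu using the specific structure of $2^a\pm 1$). One minor remark: in your $b=1$ subcase you call $2^a-1$ a Mersenne prime directly; strictly speaking one usually also notes that $a$ must then be prime, but this is automatic from the same cyclotomic factorization you use later, and in any case the paper's own usage of ``Mersenne prime'' (see Proposition~\ref{prop:ConnObstDiag2}) only requires the form $2^n-1$ with $n$ a positive integer.
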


\begin{cor}
\label{Cor:IrredJordanCoprimeRoot}
Let $q$ be a power of a prime $p$, and let $n\geq 1$. Let $p_0$ be a prime factor of $q-1$. If any $n\times n$ matrix $A$ over $\F_q$ has multiplicative order coprime to $p_0$, then $A$ has a $p_0$-th root whose multiplicative order is a multiple of $p_0$.
\end{cor}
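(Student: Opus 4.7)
The plan is to construct the required $p_0$-th root by combining the unique $p_0$-th root of $A$ sitting inside the cyclic group $\langle A\rangle$ with a primitive $p_0$-th root of unity from the centre. Since $p_0\mid q-1$, the multiplicative group $\F_q^*$ contains a primitive $p_0$-th root of unity $\zeta$, and the scalar matrix $\zeta I\in\GL_n(\F_q)$ has order exactly $p_0$ and commutes with everything.

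First I would handle the purely ``cyclic'' part of the problem. Let $m$ be the multiplicative order of $A$; by hypothesis $\gcd(m,p_0)=1$, so $p_0$ is invertible modulo $m$ and there exists an integer $k$ with $k p_0\equiv 1\pmod m$. Setting $C:=A^k$, I get $C^{p_0}=A^{k p_0}=A$, and the order of $C$ divides $m$ and is therefore still coprime to $p_0$. In effect, $C$ is the unique $p_0$-th root of $A$ inside the cyclic group $\langle A\rangle$.

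Next I would enlarge $C$ by the scalar root of unity to force the order condition. Define $B:=\zeta C$. Since $\zeta I$ is central and of order $p_0$, and $C$ has order coprime to $p_0$, these two commuting elements lie in a finite abelian subgroup in which the orders of $\zeta I$ and $C$ are coprime; hence $\text{ord}(B)=p_0\cdot\text{ord}(C)$, which is indeed a multiple of $p_0$. The root condition is immediate from the computation
\[
B^{p_0}=(\zeta C)^{p_0}=\zeta^{p_0}C^{p_0}=1\cdot A = A,
\]
so $B$ is the desired $p_0$-th root.

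The argument is essentially elementary, so there is no serious obstacle; the only point that needs care is the claim that the order of a product of two commuting elements with coprime orders is the product of the orders, which is a standard fact in any abelian group. Because this is used later to build square roots and cube roots of diagonalizable matrices within the pivot analysis, I would state the proof in this explicit constructive form so that the root $B=\zeta A^k$ can be referenced directly.
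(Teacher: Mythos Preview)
Your proof is correct. The construction $B=\zeta A^{k}$ with $kp_0\equiv 1\pmod{m}$ and $\zeta\in\F_q^*$ a primitive $p_0$-th root of unity is exactly the right idea: $B^{p_0}=A$ is immediate, and since $\zeta I$ and $A^k$ commute with coprime orders $p_0$ and a divisor of $m$, the order of their product is the product of their orders, hence divisible by $p_0$.

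The paper itself does not give a proof of this corollary here; it is one of the results imported from the earlier work \cite{previous} with the remark ``Proofs and references will be found in \cite{previous}.'' So there is no in-paper argument to compare against. Your argument is the standard elementary one and would serve perfectly well as a self-contained proof. One small stylistic point: you might note explicitly that having a multiplicative order forces $A\in\GL_n(\F_q)$, so that speaking of $\langle A\rangle$ as a finite cyclic group is justified; this is implicit in the hypothesis but worth a word since the statement only says ``$n\times n$ matrix.''
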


\begin{lem}
\label{Lem:NonJordayTypeClassify}
Let $q$ be a power of a prime $p$, and let $n\geq 2$. If a matrix $A\in\GL_n(\F_q)-Z(\GL_n(\F_q))$ has projective order coprime to $p$, then its generalized Jordan canonical form is a block diagonal matrix where each diagonal block is either $1\times 1$ or some companion matrix of an irreducible polynomial.
\end{lem}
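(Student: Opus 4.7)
The plan is to reduce the classification of the rational/generalized Jordan canonical form to a statement about the minimal polynomial of $A$, and then show that the minimal polynomial is squarefree.

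First, I would unpack the hypothesis. The projective order of $A$ is the smallest positive integer $k$ with $A^k \in Z(\GL_n(\F_q)) = \F_q^* \cdot I$, so there exists $\lambda\in\F_q^*$ with $A^k = \lambda I$, and by assumption $\gcd(k,p)=1$. Consequently $A$ satisfies the polynomial $f(x) = x^k - \lambda$, so the minimal polynomial $m_A(x)$ divides $f(x)$.

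Next I would check that $f(x) = x^k - \lambda$ is separable over $\overline{\F_q}$. Its formal derivative is $f'(x) = k x^{k-1}$, and since $\gcd(k,p) = 1$ the coefficient $k$ is nonzero in $\F_q$. Thus $f'(x)$ vanishes only at $x=0$, whereas $f(0) = -\lambda \neq 0$, so $\gcd(f,f') = 1$ and $f$ is squarefree. Hence $m_A(x)$, being a divisor of $f(x)$, is a product of distinct irreducible polynomials over $\F_q$.

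Finally I would translate this back to the generalized Jordan form. The blocks of the generalized Jordan (primary rational) canonical form correspond to the elementary divisors of $A$, each of which is a power $g(x)^e$ of some irreducible polynomial $g$, and each such elementary divisor divides the minimal polynomial $m_A(x)$. Since $m_A$ is a product of distinct irreducibles, every elementary divisor must have $e = 1$, i.e., be a single irreducible polynomial. A $1$-dimensional block occurs precisely when the irreducible factor is linear, and otherwise the block is the companion matrix of the irreducible factor, giving exactly the stated form.

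The argument is essentially self-contained and the only potentially subtle step is the separability check for $x^k - \lambda$; everything else is standard linear algebra over $\F_q$, so I do not anticipate any real obstacle.
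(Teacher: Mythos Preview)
Your argument is correct: once you observe that $A$ satisfies $x^k-\lambda$ with $\gcd(k,p)=1$, separability of this polynomial forces the minimal polynomial to be squarefree, hence every elementary divisor has exponent one, which is exactly the asserted block structure. The paper does not actually prove this lemma here---it is imported from the companion paper \cite{previous}---so there is no in-paper argument to compare against, but your proof is the standard one and almost certainly what is intended.
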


\begin{lem}
\label{Lem:CompanionPower}
Suppose the characteristic polynomial of a matrix $C\in\GL_n(\F_q)$ is irreducible, and the multiplicative order of $C$ is $k$. For any positive integer $t$, the minimal polynomial of $C^t$ is irreducible. For any factor $t$ of $k$, if $\frac{k}{t}$ is a factor of $q^m-1$, then the minimal polynomial of $C^t$ is irreducible with degree at most $m$.
\end{lem}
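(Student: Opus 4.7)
The plan is to identify the matrix algebra generated by $C$ with a finite field extension and then reduce everything to an elementary statement about orders of field elements.

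Concretely, since the characteristic polynomial $f$ of $C$ is irreducible of degree $n$, the subring $\F_q[C]\subseteq M_n(\F_q)$ is isomorphic to $\F_q[x]/(f)\cong \F_{q^n}$. Under this isomorphism, $C$ corresponds to some element $\alpha\in\F_{q^n}^*$ whose multiplicative order equals the multiplicative order of $C$, namely $k$. For any positive integer $t$, the matrix $C^t$ corresponds to $\alpha^t\in\F_{q^n}$, and the minimal polynomial of $C^t$ over $\F_q$ coincides with the minimal polynomial of $\alpha^t$ over $\F_q$. This minimal polynomial is automatically irreducible since it is the minimal polynomial of an algebraic element over a field. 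That handles the first assertion.

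For the second assertion, assume $t$ divides $k$, so that $\alpha^t$ has exact order $k/t$ in $\F_{q^n}^*$. If $k/t$ divides $q^m-1$, then $(\alpha^t)^{q^m-1}=1$, so $\alpha^t$ is a root of $x^{q^m-1}-1$, whose splitting field over $\F_q$ is $\F_{q^m}$. Thus $\alpha^t$ lies in $\F_{q^m}\cap\F_{q^n}=\F_{q^{\gcd(m,n)}}$, and hence $[\F_q(\alpha^t):\F_q]$ divides $\gcd(m,n)$. In particular, the degree of the minimal polynomial of $\alpha^t$, and therefore of $C^t$, is at most $m$.

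The argument is essentially routine once the identification $\F_q[C]\cong\F_{q^n}$ is in hand; no real obstacle presents itself beyond being careful that $\alpha^t$ lives in both $\F_{q^n}$ and $\F_{q^m}$ and hence in their intersection, which is what gives the sharp bound on the degree of the minimal polynomial.
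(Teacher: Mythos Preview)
Your argument is correct. The identification $\F_q[C]\cong\F_{q^n}$ is exactly the right tool, and once you have it both claims reduce to elementary facts about finite fields: minimal polynomials of field elements are irreducible, and an element of order dividing $q^m-1$ lies in $\F_{q^m}$, hence in $\F_{q^{\gcd(m,n)}}$, giving the degree bound.

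Note that the paper itself does not supply a proof of this lemma: it is listed in the preliminary section among results imported from the earlier paper, with the blanket remark that proofs and references may be found there. Your proof is the standard one and is almost certainly what that reference contains.
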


\begin{cor}
\label{Cor:CompanionPower}
Suppose the characteristic polynomial of a matrix $C\in\GL_n(\F_q)$ is irreducible, and the multiplicative order of $C$ is $k$. For any factor $t$ of $k$, $\frac{k}{t}$ is a factor of $q-1$ if and only if $C^t$ is a multiple of identity.
\end{cor}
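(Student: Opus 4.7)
The plan is to deduce both directions directly from Lemma \ref{Lem:CompanionPower} and a short order computation, since this corollary is essentially the $m=1$ specialization of that lemma combined with the converse.

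For the forward direction, suppose $\frac{k}{t}$ is a factor of $q-1$. I would apply Lemma \ref{Lem:CompanionPower} with $m=1$: this gives that the minimal polynomial of $C^t$ is irreducible with degree at most $1$. An irreducible polynomial of degree $\leq 1$ over $\F_q$ is of the form $x-\lambda$ with $\lambda \in \F_q$, so $C^t = \lambda I$ for some $\lambda$. Since $C \in \GL_n(\F_q)$ we have $\lambda \in \F_q^*$, so $C^t$ is a nonzero multiple of identity as required.

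For the converse, suppose $C^t = \lambda I$ for some $\lambda \in \F_q^*$. Because $t$ is a factor of $k$ and $C$ has multiplicative order $k$, the multiplicative order of $C^t$ in $\GL_n(\F_q)$ is exactly $k/t$. On the other hand, the multiplicative order of the scalar matrix $\lambda I$ equals the order of $\lambda$ in $\F_q^*$, which is a divisor of $q-1$. Equating these, $\frac{k}{t}$ divides $q-1$.

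There is no real obstacle here: both directions are one-line consequences. The only thing to be careful about is invoking Lemma \ref{Lem:CompanionPower} with the correct specialization $m=1$ and interpreting ``irreducible polynomial of degree at most $1$'' as forcing $C^t$ to be a scalar, and in the converse noting that $t \mid k$ is exactly what makes the order of $C^t$ equal to $k/t$ rather than merely dividing it.
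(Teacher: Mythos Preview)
Your proof is correct. The paper itself does not prove this corollary but imports it from \cite{previous}; your argument is exactly the natural one --- the forward direction is the $m=1$ case of Lemma~\ref{Lem:CompanionPower}, and the converse is the elementary observation that $t\mid k$ forces $C^t$ to have order precisely $k/t$, which must then divide $|\F_q^*|=q-1$.
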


\section{Classification and Connections}

By standard enumeration, here is a list of possible types of generalized Jordan canonical forms for matrices in $\GL_3(\F_q)$.

\begin{enumerate}
\item (Multiples of identity) $A$ is a multiplies of identity. These are excluded in the reduced power graph of $\PGL_3(\F_q)$.
\item (Pivot matrices) The canonical form of $A$ is $\begin{bmatrix}a&&\\&a&\\&&b\end{bmatrix}$ diagonalizable and $a\neq b$. (Only possible if $q\neq 2$.)
\item (LLL matrices) The canonical form of $A$ is $\begin{bmatrix}a&&\\&b&\\&&c\end{bmatrix}$ all eigenvalues are distinct. (Only possible if $q\neq 2,3$.) We call them LLL matrices because the most important invaraint subspaces for this kind of matrices are the three eigenlines. By Proposition~\ref{prop:ConnObstDiag}, when $q-1$ is an odd prime, images of these matrices form isolated connected components in the reduced power graphs of $\PGL_3(\F_q)$, and each component has diameter at most $2$.
\item (Irreducible Matrices) The canonical form of $A$ is the companion matrix to an irreducible polynomial of degree three over $\F_q$. By Proposition~\ref{prop:ConnObstIrred}, images of these matrices form isolated connected components in the reduced power graphs of $\PGL_3(\F_q)$, and each component has diameter at most $1$.
\item (LP matrices) The canonical form of $A$ is $\begin{bmatrix}a&\\&C\end{bmatrix}$ where $C$ is the companion matrix to an irreducible polynomial of degree two over $\F_q$. We call them LP matrices because the most important invaraint subspaces for this matrix are the eigenline and an invariant plane. By Proposition~\ref{prop:ConnObstDiag2} or Proposition~\ref{prop:ConnObstIrred2}, when $q=2$, images of these matrices form isolated connected components in the reduced power graphs of $\PGL_3(\F_q)$, and each component has diameter at most $1$.
\item (Jordan pivot matrices) The canonical form of $A$ is $\begin{bmatrix}a&&1\\&a&\\&&a\end{bmatrix}$.
\item (LLP matrices)The canonical form of $A$ is $\begin{bmatrix}a&&\\&b&1\\&&b\end{bmatrix}$. (Only possible if $q\neq 2$.) We call them LLP matrices because the most important invaraint subspaces for this matrix are the two eigenlines and an invariant plane.
\item (NPJ matrices) The canonical form of $A$ is $\begin{bmatrix}a&1&\\&a&1\\&&a\end{bmatrix}$. We call them NPJ matrices because they are non-pivot Jordan matrices. When $q$ is odd, by Proposition~\ref{prop:ConnObstJordan}, images of these matrices form isolated connected components in the reduced power graphs of $\PGL_3(\F_q)$, and each component has diameter at most $1$. When $n$ is even, then the square of this matrix is a multiple of a Jordan pivot matrix. 
\end{enumerate}

\begin{defn}
For any prime power $q$, we say the \emph{Jordan type} of a matrix $A\in\GL_3(\F_q)$ is identity, pivot, LLL, irreducible, LP, Jordan pivot, LLP, or NPJ if its generalized Jordan normal form has the corresponding form classified as above.
\end{defn}

We want to analyze the pivot components, which contains pivot matrices, Jordan pivot matrices, LLL matrices, LP matrices, LLP matrices, and (when $q$ is even) NPJ matrices. Now, by calculating potential powers, we can obtain the connection diagrams below. We use solid arrows to indicate that some matrices of a certain type has a power in another type. If the arrow has the word ``char $2$'', then the connection is only possible when $q$ is even.

\begin{figure}[H]
\centering
 \begin{tikzcd}[
 %column sep=huge,row sep=huge, 
ampersand replacement=\&]
%sep
\fbox{\begin{minipage}{10em}
\centering
LLL Matrices\\
$\begin{bmatrix}
a&&\\&b&\\&&c
\end{bmatrix}$
\end{minipage}} 
%sep
\arrow[r]
\&
%sep
\fbox{\begin{minipage}{10em}
\centering
Pivot Matrices\\
$\begin{bmatrix}
a&&\\&b&\\&&b
\end{bmatrix}$
\end{minipage}} 
\&
%sep
\fbox{\begin{minipage}{10em}
\centering
LP Matrices\\
$\begin{bmatrix}
a&\\&C
\end{bmatrix}$
\end{minipage}} 
%sep
\arrow[l]
\\
\&\&
\\
%sep
\fbox{\begin{minipage}{10em}
\centering
LLP Matrices\\
$\begin{bmatrix}
a&&\\&b&1\\&&b
\end{bmatrix}$
\end{minipage}} 
%sep
\arrow[uur]
\arrow[r]
\&
%sep
\fbox{\begin{minipage}{10em}
\centering
Jordan Pivot Matrices\\
$\begin{bmatrix}
a&&\\&a&1\\&&a
\end{bmatrix}$
\end{minipage}}
%sep
%\arrow[uu,dotted,"\leq 3"']
\&
%sep
\fbox{\begin{minipage}{10em}
\centering
NPJ Matrices\\
$\begin{bmatrix}
a&1&\\&a&1\\&&a
\end{bmatrix}$
\end{minipage}}
%sep
\arrow[l,"\text{char 2}"]
%\arrow[uul,dotted,"\text{char 2}","\leq 3"']
%sep
\end{tikzcd}
\caption{When $q-1$ is not a prime power}
\label{Fig:n3q-1NotPowerJordanConn}
\end{figure}
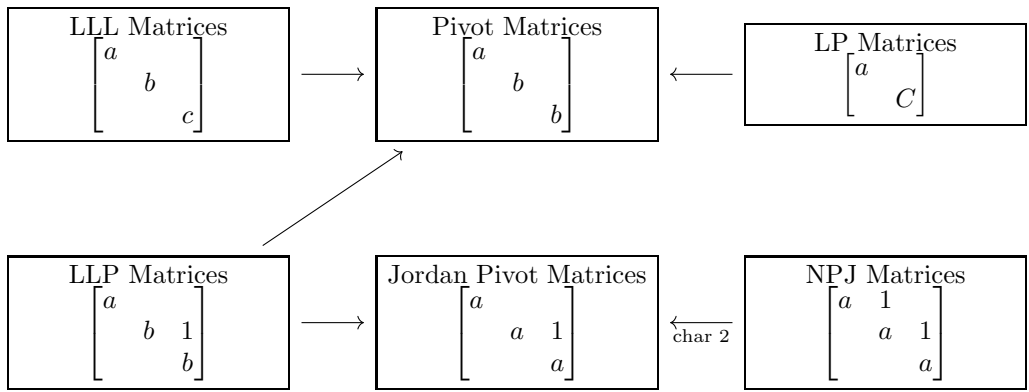

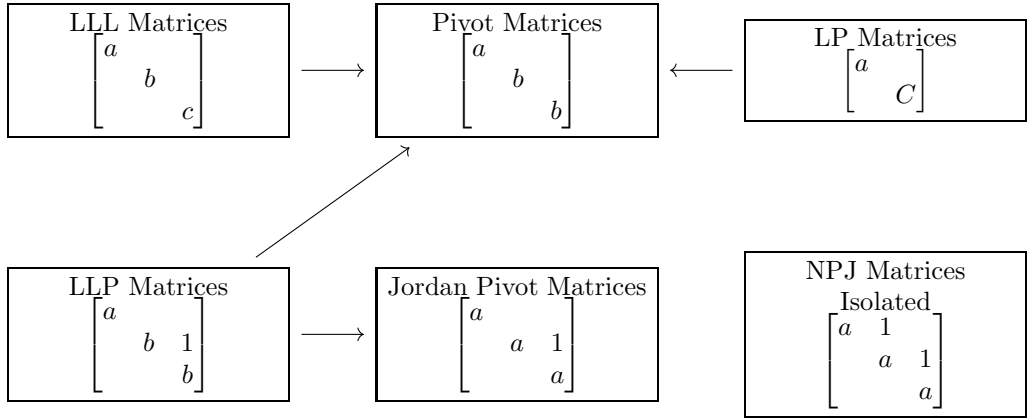
\begin{figure}[H]
\centering
 \begin{tikzcd}[
 %column sep=huge,row sep=huge, 
ampersand replacement=\&]
%sep
\fbox{\begin{minipage}{10em}
\centering
LLL Matrices\\
$\begin{bmatrix}
a&&\\&b&\\&&c
\end{bmatrix}$
\end{minipage}} 
%sep
\arrow[r]
\&
%sep
\fbox{\begin{minipage}{10em}
\centering
Pivot Matrices\\
$\begin{bmatrix}
a&&\\&b&\\&&b
\end{bmatrix}$
\end{minipage}} 
\&
%sep
\fbox{\begin{minipage}{10em}
\centering
LP Matrices\\
$\begin{bmatrix}
a&\\&C
\end{bmatrix}$
\end{minipage}} 
%sep
\arrow[l]
\\
\&\&
\\
%sep
\fbox{\begin{minipage}{10em}
\centering
LLP Matrices\\
$\begin{bmatrix}
a&&\\&b&1\\&&b
\end{bmatrix}$
\end{minipage}} 
%sep
\arrow[uur]
\arrow[r]
\&
%sep
\fbox{\begin{minipage}{10em}
\centering
Jordan Pivot Matrices\\
$\begin{bmatrix}
a&&\\&a&1\\&&a
\end{bmatrix}$
\end{minipage}}
%sep
%\arrow[uu,dotted,"\leq 3"']
\&
%sep
\fbox{\begin{minipage}{10em}
\centering
NPJ Matrices\\
Isolated\\
$\begin{bmatrix}
a&1&\\&a&1\\&&a
\end{bmatrix}$
\end{minipage}}
%sep
\end{tikzcd}
\caption{When $q=9$ or $q$ is a Fermat prime, $q\neq 3$}
\label{Fig:n3q-1OddPowerJordanConn}
\end{figure}

\begin{figure}[H]
\centering
 \begin{tikzcd}[
 %column sep=huge,row sep=huge, 
ampersand replacement=\&]
%sep
\fbox{\begin{minipage}{10em}
\centering
LLL Matrices\\
Isolated\\
$\begin{bmatrix}
a&&\\&b&\\&&c
\end{bmatrix}$
\end{minipage}} 
%sep
\&
%sep
\fbox{\begin{minipage}{10em}
\centering
Pivot Matrices\\
$\begin{bmatrix}
a&&\\&b&\\&&b
\end{bmatrix}$
\end{minipage}} 
\&
%sep
\fbox{\begin{minipage}{10em}
\centering
LP Matrices\\
$\begin{bmatrix}
a&\\&C
\end{bmatrix}$
\end{minipage}} 
%sep
\arrow[l]
\\
\&\&
\\
%sep
\fbox{\begin{minipage}{10em}
\centering
LLP Matrices\\
$\begin{bmatrix}
a&&\\&b&1\\&&b
\end{bmatrix}$
\end{minipage}} 
%sep
\arrow[uur]
\arrow[r]
\&
%sep
\fbox{\begin{minipage}{10em}
\centering
Jordan Pivot Matrices\\
$\begin{bmatrix}
a&&\\&a&1\\&&a
\end{bmatrix}$
\end{minipage}}
%sep
%\arrow[l]
%\arrow[uu,dotted,"\leq 3"']
\&
%sep
\fbox{\begin{minipage}{10em}
\centering
NPJ Matrices\\
$\begin{bmatrix}
a&1&\\&a&1\\&&a
\end{bmatrix}$
\end{minipage}}
%sep
\arrow[l]
%\arrow[uul,dotted,"\leq 3"']
%sep
\end{tikzcd}
\caption{When $q$ even and $q-1$ prime}
\label{Fig:n3q-1EvenPowerJordanConn}
\end{figure}
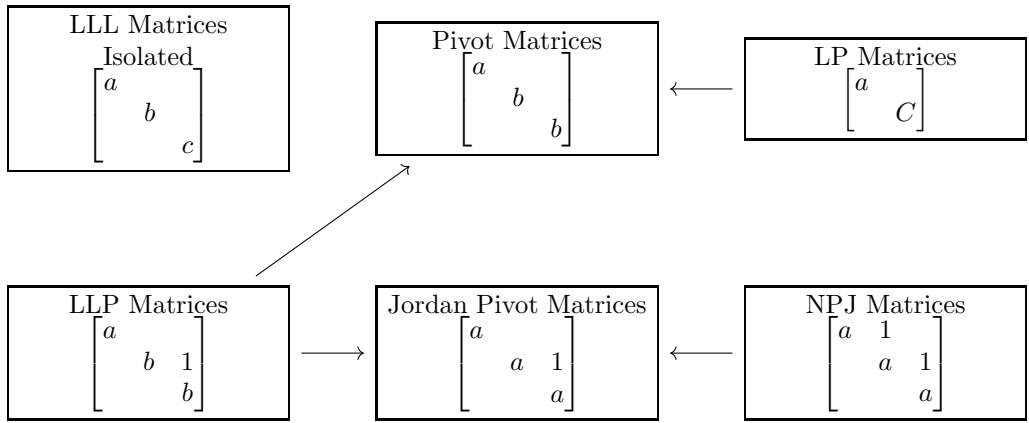

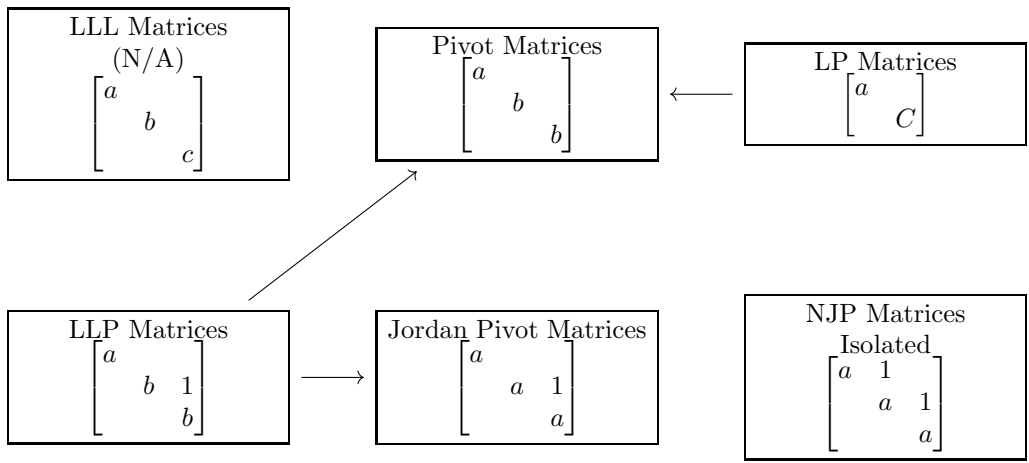
\begin{figure}[H]
\centering
 \begin{tikzcd}[
 %column sep=huge,row sep=huge, 
ampersand replacement=\&]
%sep
\fbox{\begin{minipage}{10em}
\centering
LLL Matrices\\
(N/A)\\
$\begin{bmatrix}
a&&\\&b&\\&&c
\end{bmatrix}$
\end{minipage}} 
%sep
\&
%sep
\fbox{\begin{minipage}{10em}
\centering
Pivot Matrices\\
$\begin{bmatrix}
a&&\\&b&\\&&b
\end{bmatrix}$
\end{minipage}} 
\&
%sep
\fbox{\begin{minipage}{10em}
\centering
LP Matrices\\
$\begin{bmatrix}
a&\\&C
\end{bmatrix}$
\end{minipage}} 
%sep
\arrow[l]
\\
\&\&
\\
%sep
\fbox{\begin{minipage}{10em}
\centering
LLP Matrices\\
$\begin{bmatrix}
a&&\\&b&1\\&&b
\end{bmatrix}$
\end{minipage}} 
%sep
\arrow[uur]
\arrow[r]
\&
%sep
\fbox{\begin{minipage}{10em}
\centering
Jordan Pivot Matrices\\
$\begin{bmatrix}
a&&\\&a&1\\&&a
\end{bmatrix}$
\end{minipage}}
%sep
%\arrow[l]
%\arrow[uu,dotted,"\leq 3"']
\&
%sep
\fbox{\begin{minipage}{10em}
\centering
NJP Matrices\\
Isolated\\
$\begin{bmatrix}
a&1&\\&a&1\\&&a
\end{bmatrix}$
\end{minipage}}
%sep
\end{tikzcd}
\caption{When $q=3$}
\label{Fig:n3q=3JordanConn}
\end{figure}

\begin{figure}[H]
\centering
 \begin{tikzcd}[
 %column sep=huge,row sep=huge, 
ampersand replacement=\&]
%sep
\fbox{\begin{minipage}{10em}
\centering
LLL Matrices\\
(N/A)\\
$\begin{bmatrix}
a&&\\&b&\\&&c
\end{bmatrix}$
\end{minipage}} 
%sep
\&
%sep
\fbox{\begin{minipage}{10em}
\centering
Pivot Matrices\\
(N/A)\\
$\begin{bmatrix}
a&&\\&b&\\&&b
\end{bmatrix}$
\end{minipage}} 
\&
%sep
\fbox{\begin{minipage}{10em}
\centering
LP Matrices\\
Isolated\\
$\begin{bmatrix}
a&\\&C
\end{bmatrix}$
\end{minipage}} 
%sep
\\
\&\&
\\
%sep
\fbox{\begin{minipage}{10em}
\centering
LLP Matrices\\
(N/A)\\
$\begin{bmatrix}
a&&\\&b&1\\&&b
\end{bmatrix}$
\end{minipage}} 
%sep
\&
%sep
\fbox{\begin{minipage}{10em}
\centering
Jordan Pivot Matrices\\
$\begin{bmatrix}
a&&\\&a&1\\&&a
\end{bmatrix}$
\end{minipage}}
\&
%sep
\fbox{\begin{minipage}{10em}
\centering
NPJ Matrices\\
$\begin{bmatrix}
a&1&\\&a&1\\&&a
\end{bmatrix}$
\end{minipage}}
%sep
\arrow[l]
%sep
\end{tikzcd}
\caption{When $q=2$}
\label{Fig:n3q=2JordanConn}
\end{figure}
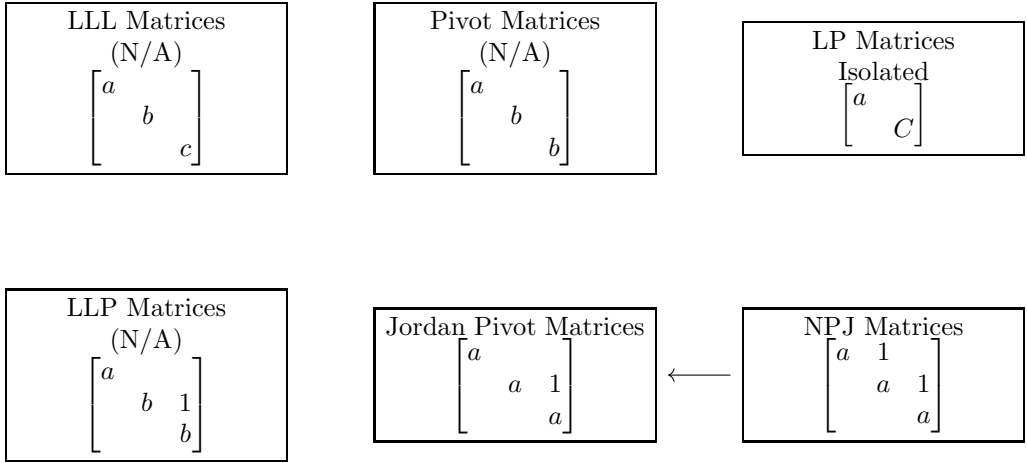

Going through these diagram, we can clearly see the following result.

\begin{lem}
\label{Lem:n3ClassifyPivotComponent}
Let $q\neq 2$ be a prime power. For any matrix $A$ whose image is in the pivot component of the reduced power graph of $\PGL_3(\F_q)$, either $A$ is similar to $\begin{bmatrix}a&\\&C\end{bmatrix}$ for some $a\in\F_q^*$ and invertible $2\times 2$ matrix $C$, or $q$ is even and $A$ is similar to $\begin{bmatrix}a&1&\\&a&1\\&&a\end{bmatrix}$ for some $a\in\F_q^*$.
\end{lem}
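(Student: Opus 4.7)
The plan is to enumerate the eight Jordan types from the classification above and check each against the two allowed forms in the lemma. Five of the types directly match $\begin{bmatrix}a & \\ & C\end{bmatrix}$: pivot matrices ($C$ scalar), LLL matrices ($C$ diagonal with distinct eigenvalues), LP matrices ($C$ the companion of an irreducible quadratic), LLP matrices ($C$ a Jordan block with a repeated eigenvalue distinct from $a$), and Jordan pivot matrices (after a permutation of basis, $\begin{bmatrix}a&&1\\&a&\\&&a\end{bmatrix}$ becomes $\begin{bmatrix}a&\\&J\end{bmatrix}$ with $J$ a $2\times 2$ Jordan block with eigenvalue $a$). Multiples of the identity are excluded from the reduced power graph of $\PGL_3(\F_q)$ by construction. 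This leaves irreducible matrices and NPJ matrices to be handled.

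For irreducible matrices, Proposition~\ref{prop:ConnObstIrred} applies since $n=3$ is prime: each such matrix is trapped in its own connected component consisting of images of non-scalar polynomials in $A$, all of which still have irreducible minimal polynomial of degree $3$ (using that $\F_q[A]\cong\F_{q^3}$ is a field). A pivot matrix has reducible minimal polynomial $(x-a)(x-b)$, so it lies in a different component from any irreducible matrix, and hence the pivot component is disjoint from all such irreducible components.

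For NPJ matrices with $q$ odd, the characteristic $p$ of $\F_q$ satisfies $p \geq 3 = n$, so Proposition~\ref{prop:ConnObstJordan} applies: NPJ matrices are trapped in their own components of size $p-1$ consisting of non-scalar powers, hence cannot lie in the pivot component. For $q$ even, the Jordan-type obstruction does not apply, and the NPJ canonical form $\begin{bmatrix}a&1&\\&a&1\\&&a\end{bmatrix}$ is exactly the second form in the statement; such matrices do join the pivot component in this case, since in characteristic $2$ the square of an NPJ matrix is a scalar multiple of a Jordan pivot matrix, which already lies in the pivot component.

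The main obstacle here is not serious; it is purely a bookkeeping exercise of checking all eight types, since the genuine work (trapping irreducible and odd-characteristic Jordan forms in their own components) was already carried out in the obstruction propositions recalled in the preliminary section.
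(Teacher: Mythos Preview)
Your proof is correct and follows the same approach as the paper, which simply says ``Check the diagrams above one by one.'' You have merely spelled out explicitly what those diagrams encode: five of the eight Jordan types are manifestly decomposable, identity is excluded, irreducible matrices are isolated by Proposition~\ref{prop:ConnObstIrred}, and NPJ matrices are isolated for odd $q$ by Proposition~\ref{prop:ConnObstJordan} while for even $q$ they match the second alternative in the statement.
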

\begin{proof}
Check the diagrams above one by one.
\end{proof}

For this purpose, we make the following definition.

\begin{defn}
For any prime power $q$, if $A\in\GL_3(\F_q)$ is similar to $\begin{bmatrix}a&\\&C\end{bmatrix}$ for some $a\in\F_q^*$ and invertible $2\times 2$ matrix $C$, then we say $A$ is \emph{decomposable}.
\end{defn}

Note that decomposable matrices are exactly the LLL matirces, pivot matrices, LP matrices, LLP matrices,  and Jordan pivot matrices.

\section{Decomposable Matrices to Pivot Matrices}

\begin{lem}
\label{Lem:n3qPrimeAvoidADecomposable}
Let $q\neq 2$ be a power of a prime $p$, and pick any factor $p_0$ of $q-1$. Suppose a decomposable matrix $A\in\GL_3(\F_q)-Z(\GL_n(\F_q))$ has projective order coprime to $p_0$. Then a power of a root of a scalar multiple of $A$ is a pivot matrix with multiplicative order $p_0$.
\end{lem}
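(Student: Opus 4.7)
The plan is as follows. Since $A$ is decomposable, we may assume up to similarity that $A = \begin{bmatrix}a & \\ & C\end{bmatrix}$ for some $a \in \F_q^*$ and $C \in \GL_2(\F_q)$. The desired pivot matrix will be obtained by scaling $A$, taking a $p_0$-th root, and raising to an appropriate power.

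First, I would find a scalar $\mu \in \F_q^*$ such that $\mu A$ has multiplicative order coprime to $p_0$. If $m$ is the projective order of $A$ with $A^m = \lambda I$, then one checks $\text{ord}(\mu A) = m \cdot \text{ord}(\mu^m \lambda)$, so it suffices to pick $\mu$ with $\mu^m \lambda$ having order coprime to $p_0$. Writing $\F_q^* \cong H_1 \times H_2$ with $H_1$ the $p_0$-Sylow subgroup, since $\gcd(m, p_0) = 1$ the $m$-th power map hits all of $H_1$, so the coset $\lambda \cdot \{m\text{-th powers}\}$ meets $H_2$, yielding such a $\mu$.

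Next, since $\mu A = \begin{bmatrix}\mu a & \\ & \mu C\end{bmatrix}$, both $\mu a$ and $\mu C$ inherit multiplicative order coprime to $p_0$. Applying Corollary~\ref{Cor:IrredJordanCoprimeRoot} separately to each block gives $b \in \F_q^*$ with $b^{p_0} = \mu a$ and $\text{ord}(b) = p_0 \cdot \text{ord}(\mu a)$, and $D \in \GL_2(\F_q)$ with $D^{p_0} = \mu C$ and $\text{ord}(D) = p_0 \cdot \text{ord}(\mu C)$. Setting $B = \begin{bmatrix}b & \\ & D\end{bmatrix}$ gives a $p_0$-th root of $\mu A$. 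Taking $k = \text{lcm}(\text{ord}(\mu a), \text{ord}(\mu C))$ (coprime to $p_0$), a direct order computation yields $\text{ord}(b^k) = \text{ord}(D^k) = p_0$. Since $\gcd(p_0, p) = 1$ and the $p_0$-th roots of unity lie in $\F_q^*$, the matrices $b^k$ and $D^k$ are diagonalizable over $\F_q$ with eigenvalues among the $p_0$-th roots of unity; hence $B^k$ is diagonalizable with three such eigenvalues.

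The main obstacle is ensuring $B^k$ has \emph{exactly two} distinct eigenvalues (pivot form) rather than one (central) or three (LLL). The trick is that replacing $b$ with $\zeta^i b$ for $\zeta$ a primitive $p_0$-th root of unity still yields a $p_0$-th root of $\mu a$ and transforms $b^k$ into $\zeta^{ik} b^k$. Because $\gcd(k, p_0) = 1$, the scalar $b^k$ can be reset to any $p_0$-th root of unity. A brief case split on whether $D^k$ is a scalar matrix or has two distinct eigenvalues then shows we can always match $b^k$ suitably: in the first case, pick $b^k$ different from the unique eigenvalue of $D^k$ (at least $p_0 - 1 \geq 1$ options); in the second, pick $b^k$ equal to one of the two eigenvalues of $D^k$. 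The resulting $B^k$ is a pivot matrix whose eigenvalues are $p_0$-th roots of unity with at least one primitive, so its multiplicative order is $p_0$.
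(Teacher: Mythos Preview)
Your argument is correct and follows the same overall strategy as the paper: scale $A$, take a $p_0$-th root using Corollary~\ref{Cor:IrredJordanCoprimeRoot}, raise to a suitable power, and then tweak by a $p_0$-th root of unity to force exactly two distinct eigenvalues. The case split at the end (whether the $2\times 2$ block becomes scalar or not after powering) matches the paper's split on whether $s_1-s_2$ is divisible by $p_0$.

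The one place where the paper is noticeably cleaner is the choice of scalar. Instead of your Sylow argument to manufacture $\mu$ with $\mathrm{ord}(\mu A)$ coprime to $p_0$, the paper simply takes $\mu=a^{-1}$, so that $a^{-1}A$ is similar to $\begin{bmatrix}1&\\&a^{-1}C\end{bmatrix}$ and automatically has multiplicative order equal to the projective order $k$ of $A$ (the top-left entry is $1$, so no scalar ambiguity survives). This normalization makes the $1\times1$ block trivial to handle---its $p_0$-th root can just be $1$---so Corollary~\ref{Cor:IrredJordanCoprimeRoot} only needs to be invoked once, on the $2\times2$ block. Your route works, but this shortcut removes the need for the coset-meets-$H_2$ step and the separate root $b$ of $\mu a$.
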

\begin{proof}
Suppose $A$ has projective order $k$. If $A$ is similar to $\begin{bmatrix}a&\\&C\end{bmatrix}$ for some $a\in\F_q^*$ and invertible $2\times 2$ matrix $C$, consider $a^{-1}A$, which is similar to $\begin{bmatrix}1&\\&a^{-1}C\end{bmatrix}$. Its multiplicative order must be $k$. So $a^{-1}C$ has multiplicative order $k$.

Since $k$ is coprime to $p_0$, by Corollary~\ref{Cor:IrredJordanCoprimeRoot}, $a^{-1}C$ has a $p_0$-th root $C'$ whose multiplicative order is a multiple of $p_0$. Let $x\in\F_q^*$ be any element with multiplicative order $p_0$. Then since $(C')^{kp_0}$ must be identity and $p_0$ is coprime to $p$, therefore $(C')^k$ must be diagonalizable with eigenvalue $x^{s_1},x^{s_2}$ for some positive integer $s_1,s_2$. Note that $k$ is coprime to $p_0$. So we can find a positive integer $k'$ such that $kk'$ is $1$ modulus $p_0$. 

Suppose $s_1-s_2$ is a multiple of $p_0$. So since $a^{-1}A$ is similar to $\begin{bmatrix}1&\\&a^{-1}C\end{bmatrix}$, therefore $a^{-1}A$ has a $p_0$-th root similar to $\begin{bmatrix}1&\\&x^{k'(1-s_1)}C'\end{bmatrix}$, whose $k$-th power is similar to $\begin{bmatrix}1&&\\&x&\\&&x\end{bmatrix}$, a pivot matrix with multiplicative order $p_0$.

Suppose $s_1-s_2$ is not a multiple of $p_0$. So since $a^{-1}A$ is similar to $\begin{bmatrix}1&\\&a^{-1}C\end{bmatrix}$, therefore $a^{-1}A$ has a $p_0$-th root similar to $\begin{bmatrix}1&\\&x^{-k's_1}C'\end{bmatrix}$, whose $k$-th power is similar to $\begin{bmatrix}1&&\\&1&\\&&x^{s_2-s_1}\end{bmatrix}$, a pivot matrix with multiplicative order $p_0$.
\end{proof}

\begin{lem}
\label{Lem:n3qOddAEven}
If $q$ is a power of an odd prime $p$. Suppose a decomposable matrix $A\in\GL_3(\F_q)-Z(\GL_n(\F_q))$ has even projective order. Then a power of a scalar multiple of $A$ is a pivot matrix with multiplicative order $2$.
\end{lem}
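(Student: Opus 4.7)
The plan is to reduce to the $2\times 2$ block and exploit a feature special to order $2$: in odd characteristic, every matrix of multiplicative order $2$ is semisimple with eigenvalues in $\{\pm 1\}$. This makes the argument considerably shorter than the general construction in Lemma~\ref{Lem:n3qPrimeAvoidADecomposable}, since no $p_0$-th root extraction is needed.

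First I would write $A$ up to similarity as $\begin{bmatrix} a & \\ & C \end{bmatrix}$ with $a \in \F_q^*$ and $C \in \GL_2(\F_q)$, and replace $A$ by the scalar multiple $a^{-1}A$, which is similar to $\begin{bmatrix} 1 & \\ & B \end{bmatrix}$ for $B := a^{-1}C$. Because of the $1$ in the top block, the multiplicative order of $a^{-1}A$ coincides with its projective order, and both coincide with the multiplicative order of $B$. Since projective order is scalar-invariant, $B$ then has even multiplicative order, say $2m$.

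The key observation is that $B^m$ has multiplicative order exactly $2$, so its minimal polynomial divides $x^2 - 1 = (x-1)(x+1)$. Since $p$ is odd these factors are coprime, hence $B^m$ is diagonalizable over $\F_q$ with eigenvalues drawn from $\{1,-1\}$, and since $B^m \neq I_2$ it must be either $-I_2$ or similar to $\mathrm{diag}(1,-1)$. Either way, $(a^{-1}A)^m$ is similar to $\mathrm{diag}(1,-1,-1)$ or $\mathrm{diag}(1,1,-1)$; both are pivot matrices of multiplicative order $2$, which is exactly what the conclusion asks for. The only bookkeeping point is the identification of projective and multiplicative orders after scaling by $a^{-1}$, and that is automatic from the fact that the $(1,1)$ entry of the normal form is $1$; there is no genuine obstacle beyond this.
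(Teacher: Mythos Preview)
Your proof is correct and follows essentially the same route as the paper: scale $A$ by $a^{-1}$ so that the top-left block becomes $1$, identify the projective order of $A$ with the multiplicative order of the resulting matrix, then take the appropriate half-power and use that an element of multiplicative order $2$ in odd characteristic is diagonalizable with eigenvalues in $\{\pm 1\}$. The only cosmetic difference is that you apply the order-$2$ diagonalization to the $2\times 2$ block $B^m$ whereas the paper applies it directly to the $3\times 3$ matrix $(a^{-1}A)^{k/2}$; the content is the same.
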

\begin{proof}
Suppose $A$ has projective order $k$.  If $A$ is similar to $\begin{bmatrix}a&\\&C\end{bmatrix}$ for some $a\in\F_q^*$ and invertible $2\times 2$ matrix $C$, consider $a^{-1}A$, which is similar to $\begin{bmatrix}1&\\&a^{-1}C\end{bmatrix}$. Its multiplicative order must be $k$. So $a^{-1}C$ has multiplicative order $k$.

Now $k$ is even. Therefore, consider $(a^{-1}A)^{\frac{k}{2}}$, which has multiplicative order $2$, and hence it must be diagonalizable over $\F_q$, and it must have eigenvalues $1,\pm 1,\pm 1$. Since it cannot be identity, $A^{\frac{k}{2}}$ is similar to $\begin{bmatrix}1&&\\&-1&\\&&-1\end{bmatrix}$ or $\begin{bmatrix}1&&\\&1&\\&&-1\end{bmatrix}$. So we are done.
\end{proof}

\begin{cor}
\label{Cor:n3qOddPath2Pivot}
If $q$ is a power of an odd prime $p$. Then for any matrix $A\in\GL_3(\F_q)-Z(\GL_n(\F_q))$, in the reduced power graph of $\PGL_3(\F_q)$, either the image of $A$ has distance at most $2$ to the image of a pivot matrix with multiplicative order $2$, or the image of $A$ has no path to the image of any pivot matrix.
\end{cor}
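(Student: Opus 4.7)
The plan is to assemble the two preceding lemmas together with the classification in Lemma~\ref{Lem:n3ClassifyPivotComponent} via a case analysis on the Jordan type and the parity of the projective order of $A$.

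First I would dispose of the non-decomposable case. By Lemma~\ref{Lem:n3ClassifyPivotComponent}, since $q$ is odd, any matrix whose image lies in the pivot component must be decomposable (the NPJ alternative there is unavailable in odd characteristic). The non-decomposable Jordan types appearing in the classification are exactly the irreducible type and the NPJ type; Propositions~\ref{prop:ConnObstIrred} and~\ref{prop:ConnObstJordan} (the latter applicable because $n = 3 \le p$ for every odd prime $p$) show that each such matrix sits in an isolated component containing no pivot matrix. So if $A$ is not decomposable, the image of $A$ has no path to the image of any pivot matrix, and the second alternative of the corollary holds.

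Now assume $A$ is decomposable with projective order $k$, and split on the parity of $k$. If $k$ is even, Lemma~\ref{Lem:n3qOddAEven} yields a scalar $c$ and integer $m$ with $(cA)^m$ a pivot matrix of multiplicative order $2$; the image of $cA$ coincides with that of $A$ in $\PGL_3(\F_q)$, and $(cA)^m$ is a power of $cA$, so the image of this pivot matrix is equal or adjacent to the image of $A$, giving distance at most $1 \le 2$. If $k$ is odd, then since $q$ is odd the prime $p_0 = 2$ divides $q-1$ and is coprime to $k$; applying Lemma~\ref{Lem:n3qPrimeAvoidADecomposable} with $p_0 = 2$ yields a matrix $B$ and integer $m$ such that $B^2$ is a scalar multiple of $A$ and $B^m$ is a pivot matrix of multiplicative order $2$. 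Then the image of $A$ is adjacent to that of $B$ (as $A$ is projectively a power of $B$), which is in turn adjacent to the image of $B^m$, so the distance from the image of $A$ to this pivot matrix is at most $2$.

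There is no substantive obstacle here since the two preceding lemmas have done the heavy lifting. The only point requiring care is translating the statements ``power of a scalar multiple'' and ``power of a root of a scalar multiple'' into edge-counts in the reduced power graph of $\PGL_3(\F_q)$, using that scalar multiples collapse to the same vertex under the quotient by the center so that each ``take a root'' or ``take a power'' step contributes at most one edge.
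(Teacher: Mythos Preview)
Your proposal is correct and follows essentially the same route as the paper: first rule out the non-decomposable Jordan types (irreducible and NPJ) via the obstruction propositions, then split the decomposable case on the parity of the projective order and invoke Lemma~\ref{Lem:n3qOddAEven} (even case, distance at most $1$) and Lemma~\ref{Lem:n3qPrimeAvoidADecomposable} with $p_0=2$ (odd case, distance at most $2$). Your use of Lemma~\ref{Lem:n3ClassifyPivotComponent} for the non-decomposable step is a slight repackaging of the paper's direct appeal to the Jordan-form classification, but the content is identical.
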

\begin{proof}
Note that the image of $A$ has a path to the image of any pivot matrix if and only if $A$ is not as described by Proposition~\ref{prop:ConnObstJordan} or Proposition~\ref{prop:ConnObstIrred}. By going through all possible generalized Jordan canonical form, $A$ is not as described by Proposition~\ref{prop:ConnObstJordan} or Proposition~\ref{prop:ConnObstIrred} if and only if $A$ is decomposable.

If $A$ has even projective order, then this distance is at most $1$ by Lemma~\ref{Lem:n3qOddAEven}. If $A$ has odd projective order, then this distance is at most $2$ by Lemma~\ref{Lem:n3qPrimeAvoidADecomposable}.
\end{proof}

\begin{lem}
\label{Lem:n3qEvenANotPower}
If $q\neq 2$ is a power of $2$. Fix any prime factor $p_0$ of $q-1$. Suppose the projective order of a decomposable matrix $A\in\GL_3(\F_q)-Z(\GL_n(\F_q))$ is not a power of $p_0$, then in the reduced power graph of $\PGL_3(\F_q)$, the image of $A$ has distance at most $3$ to the image of a pivot matrix with multiplicative order $p_0$.
\end{lem}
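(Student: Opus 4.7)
The plan is to reduce to the setting of Lemma~\ref{Lem:n3qPrimeAvoidADecomposable} by first passing from $A$ to a power of $A$ whose projective order is coprime to $p_0$, and then invoking that lemma to reach a pivot matrix of order $p_0$ within two further edges.

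To carry this out, I would first write the projective order $k$ of $A$ as $k = p_0^e \cdot m$ with $\gcd(m, p_0) = 1$; since $k$ is not a power of $p_0$ by hypothesis, $m > 1$. Setting $A' := A^{p_0^e}$, I note that because $A$ is similar to some block matrix $\begin{bmatrix}a&\\&C\end{bmatrix}$, so is $A'$, so $A'$ remains decomposable; its projective order is $m$, which is both strictly greater than $1$ (so $A'$ is non-central) and coprime to $p_0$. Applying Lemma~\ref{Lem:n3qPrimeAvoidADecomposable} to $A'$ then produces a matrix $R$ with $R^{p_0} = \alpha A'$ for some $\alpha \in \F_q^*$, together with an exponent $j$ such that $R^j$ is a pivot matrix of multiplicative order $p_0$.

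Finally, I would assemble the path in the reduced power graph of $\PGL_3(\F_q)$. In the quotient, the image of $A$ is at distance at most one from the image of $A'$: if $e = 0$ the two images coincide, while if $e > 0$ then $A'$ is a proper power of $A$ whose image remains distinct from that of $A$ because $k = p_0^e m > p_0^e - 1$. Next, the image of $A'$ equals the image of $R^{p_0}$ and so is a power of the image of $R$, supplying a second edge; a third edge runs from the image of $R$ to the image of the pivot matrix $R^j$. This yields total distance at most $3$, as claimed. I expect no serious obstacle beyond the small bookkeeping verifying $\bar A \neq \bar{A'}$ when $e > 0$, which is needed to make sure the first edge is an actual edge rather than a repeated vertex; the rest is a clean $p_0'$-part reduction followed by direct citation of the earlier lemma.
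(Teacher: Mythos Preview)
Your argument is correct and follows essentially the same strategy as the paper: pass from $A$ to a power $A'$ that is still decomposable and non-central but has projective order coprime to $p_0$, then invoke Lemma~\ref{Lem:n3qPrimeAvoidADecomposable} to reach a pivot matrix of order $p_0$ in two further steps. The only cosmetic difference is in the choice of power---the paper picks a prime $p_1\neq p_0$ dividing $k$ and sets $A'=A^{k/p_1}$ (so $A'$ has projective order exactly $p_1$), whereas you set $A'=A^{p_0^e}$ (stripping only the $p_0$-part); either choice lands in the hypotheses of Lemma~\ref{Lem:n3qPrimeAvoidADecomposable} and the rest is identical.
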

\begin{proof}
Suppose $A$ has projective order $k$. Since $k$ is not a power of $p_0$, we can find a prime factor $p_1$ of $k$ distinct from $p_0$.  Then $A'=A^{\frac{k}{p_1}}$ will have projective order $p_1$. Also note that $A'$ is a power of a decomposable matrix, hence it is decomposable itself. By Lemma~\ref{Lem:n3qPrimeAvoidADecomposable}, in the reduced power graph of $\PGL_3(\F_q)$, the image of $A'$ has distance at most $2$ to the image of a pivot matrix with multiplicative order $p_0$. So the image of $A$ has distance at most $3$ to the image of a pivot matrix with multiplicative order $p_0$.
\end{proof}

\begin{lem}
\label{Lem:n3qEvenANPJ}
If $q\neq 2$ is a power of $2$. Fix any prime factor $p_0$ of $q-1$. Suppose a matrix $A\in\GL_3(\F_q)-Z(\GL_n(\F_q))$ is similar to $\begin{bmatrix}a&1&\\&a&1\\&&a\end{bmatrix}$ for some $a\in\F_q^*$, then in the reduced power graph of $\PGL_3(\F_q)$, the image of $A$ has distance at most $3$ to the image of a pivot matrix with multiplicative order $p_0$.
\end{lem}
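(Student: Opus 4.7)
The plan is to reduce to the Jordan-pivot case by squaring $A$, using the fact that we are in characteristic $2$. Since $A$ is similar to $aI+N$ where $N$ is the $3\times 3$ nilpotent Jordan block, and since the cross term $2aN$ vanishes in characteristic $2$, we get $A^2$ similar to $a^2 I + N^2$. Since $N^2$ is the rank-one matrix $E_{13}$ satisfying $(N^2)^2=0$, the matrix $A^2$ is similar to $\begin{bmatrix}a^2&&1\\&a^2&\\&&a^2\end{bmatrix}$, which (after a permutation of basis vectors) is a Jordan pivot matrix. It is not a scalar, so its image in $\PGL_3(\F_q)$ is a legitimate vertex of the reduced power graph, adjacent to the image of $A$.

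Next I would observe that $A^2$ is decomposable in the sense of the preceding definition (every Jordan pivot matrix is block-diagonal with a $1\times 1$ and a $2\times 2$ block), and that its projective order equals $2$: a direct computation gives $(A^2)^2 = a^4 I + 2a^2 N^2 + N^4 = a^4 I$ in characteristic $2$. Because $q$ is an even prime power, $q-1$ is odd, so any prime factor $p_0$ of $q-1$ is odd and hence coprime to $2$. Thus $A^2$ fulfills the hypotheses of Lemma~\ref{Lem:n3qPrimeAvoidADecomposable}.

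Applying that lemma produces a $p_0$-th root $E$ of a scalar multiple of $A^2$ such that $E^{k}$ is a pivot matrix $P$ of multiplicative order $p_0$, where $k=2$ is the projective order of $A^2$. Translating this into the reduced power graph of $\PGL_3(\F_q)$: scalar multiples collapse to the same vertex, so the image of $E$ satisfies $E^{p_0}$ equal to the image of $A^2$, placing $[E]$ and $[A^2]$ at distance at most $1$; likewise $[P]=[E^k]$ and $[E]$ are at distance at most $1$. Chaining these with the edge from $[A]$ to $[A^2]$ yields a path of length at most $3$ from $[A]$ to $[P]$.

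The main potential pitfall is just bookkeeping: one must verify at each step that the intermediate vertices are genuinely non-identity elements of $\PGL_3(\F_q)$, i.e.\ that neither $A^2$ nor $E$ is scalar. For $A^2$ this is immediate from $N^2\ne 0$; for $E$ it follows from the explicit block form $\begin{bmatrix}1&\\&x^{\ast}C'\end{bmatrix}$ constructed in the proof of Lemma~\ref{Lem:n3qPrimeAvoidADecomposable}, whose $2\times 2$ block has multiplicative order a multiple of $p_0>1$. No further computation is required, since the heavy lifting is already done in the cited lemma.
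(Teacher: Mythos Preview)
Your argument is correct and follows essentially the same route as the paper: square $A$ in characteristic $2$ to land on a Jordan pivot matrix (the paper writes it as $(a^{-1}A)^2$, but this has the same image in $\PGL_3(\F_q)$ as your $A^2$), observe this is decomposable with projective order $2$ coprime to $p_0$, and then invoke Lemma~\ref{Lem:n3qPrimeAvoidADecomposable} for the remaining distance $2$. Your added verification that the intermediate vertices are non-scalar is a welcome bit of care that the paper leaves implicit.
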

\begin{proof}
Note that $A'=(a^{-1}A)^2$ is similar to $\begin{bmatrix}1&&1\\&1&\\&&1\end{bmatrix}$, which is a decomposable matrix with multiplicative order $2$. By Lemma~\ref{Lem:n3qPrimeAvoidADecomposable}, in the reduced power graph of $\PGL_3(\F_q)$, the image of $A'$ has distance at most $2$ to the image of a pivot matrix with multiplicative order $p_0$. So the image of $A$ has distance at most $3$ to the image of a pivot matrix with multiplicative order $p_0$.
\end{proof}

\begin{lem}
\label{Lem:n3qEvenNotMersenneAPower}
If $q\neq 2$ is a power of $2$ and $q-1$ is not prime. Fix any prime factor $p_0$ of $q-1$. Suppose the projective order of a decomposable matrix $A\in\GL_3(\F_q)-Z(\GL_n(\F_q))$ is a power of $p_0$, then in the reduced power graph of $\PGL_3(\F_q)$, the image of $A$ has distance at most $4$ to the image of a pivot matrix with multiplicative order $p_0$.
\end{lem}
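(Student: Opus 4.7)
The plan is to reduce to Lemma~\ref{Lem:n3qEvenANotPower} by producing a decomposable matrix $B\in\GL_3(\F_q)$ whose image in $\PGL_3(\F_q)$ is adjacent to the image of $A$, but whose projective order is not a power of $p_0$. If I can build such a $B$, then Lemma~\ref{Lem:n3qEvenANotPower} gives distance at most $3$ from the image of $B$ to a pivot matrix of multiplicative order $p_0$, and walking one further edge back to the image of $A$ yields the bound of $4$.

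To get started, I need a prime $p_1$ dividing $q-1$ with $p_1\neq p_0$. This is where the hypothesis ``$q-1$ not prime'' is used, combined with the fact that $2^n-1$ is never a proper prime power when $n\geq 2$ (which follows from Mihailescu's theorem): together these imply that $q-1$ is not a prime power, and therefore has at least two distinct prime factors.

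For the construction of $B$, using decomposability I would write $A$ as similar to $\begin{bmatrix}a&\\&C\end{bmatrix}$ via some $P$. A short computation shows that the projective order $k$ of $A$ equals the multiplicative order of $a^{-1}C$, and by hypothesis $k$ is a power of $p_0$, hence coprime to $p_1$. Corollary~\ref{Cor:IrredJordanCoprimeRoot} applied to $a^{-1}C\in\GL_2(\F_q)$ with the prime $p_1$ then yields a $p_1$-th root $C'$ whose multiplicative order is a multiple of $p_1$; since $(C')^{p_1 k}=I$ and no smaller exponent can work, the order of $C'$ equals $p_1 k$ exactly. Taking $B$ to be the conjugate of $\begin{bmatrix}1&\\&C'\end{bmatrix}$ by $P^{-1}$, I obtain a decomposable matrix whose projective order is the multiplicative order of $C'$ (namely $p_1 k$, not a power of $p_0$), and which satisfies $B^{p_1}=a^{-1}A$. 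In particular the image of $A$ is a power of the image of $B$ in $\PGL_3(\F_q)$, and the two images are distinct because their projective orders $k$ and $p_1 k$ differ, so they are adjacent in the reduced power graph.

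The main obstacle in this plan is the bookkeeping of multiplicative versus projective orders: one needs to check both that the $p_1$-th root $C'$ produced by Corollary~\ref{Cor:IrredJordanCoprimeRoot} has precisely the order needed, and that the $1\times 1$ identity block of $B$ prevents the projective order of $B$ from dropping below $p_1 k$. Once these calculations are carried out, the application of Lemma~\ref{Lem:n3qEvenANotPower} to $B$ closes the argument.
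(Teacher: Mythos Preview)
Your argument is correct and rests on the same key idea as the paper's: since $q$ is even and $q-1$ is not prime, Lemma~\ref{Lem:ConseqPP} (or, as you observe, Mih\u{a}ilescu) forces $q-1$ to have a second prime factor $p_1\neq p_0$, and one then ``switches primes'' using Corollary~\ref{Cor:IrredJordanCoprimeRoot}. The packaging is slightly different. The paper applies Lemma~\ref{Lem:n3qPrimeAvoidADecomposable} twice: first with the prime $p_1$ (the projective order of $A$, being a power of $p_0$, is coprime to $p_1$) to reach in two steps a pivot matrix of multiplicative order $p_1$, and then with the prime $p_0$ to reach in two further steps a pivot of order $p_0$, for $2+2=4$. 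You instead build the first root $B$ explicitly and then invoke Lemma~\ref{Lem:n3qEvenANotPower} on $B$ for the remaining three steps, giving $1+3=4$. Unwinding the lemmas, both routes trace a path of the same shape (root, power, root, power); the paper's version simply avoids the hand verification that the order of $C'$ is exactly $p_1k$, since Lemma~\ref{Lem:n3qPrimeAvoidADecomposable} already absorbs that bookkeeping.
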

\begin{proof}
Since $q$ is even and $q-1$ is not prime, by Lemma~\ref{Lem:ConseqPP}, $q-1$ is not a prime power. So we can find a prime factor $p_1$ of $q-1$ distinct from $p_0$.

If the projective order of $A$ is a power of $p_0$, then it is coprime to $p_1$. By Lemma~\ref{Lem:n3qPrimeAvoidADecomposable}, in the reduced power graph of $\PGL_3(\F_q)$, the image of $A$ has distance at most $2$ to the image of a pivot matrix with multiplicative order $p_1$. However, this pivot matrix has multiplicative order coprime to $p_0$. So again by Lemma~\ref{Lem:n3qPrimeAvoidADecomposable}, the image of this pivot matrix has distance at most $2$ to the image of a pivot matrix with multiplicative order $p_0$. So all in all, the image of $A$ has distance at most $4$ to the image of a pivot matrix with multiplicative order $p_0$.
\end{proof}

\begin{lem}
\label{Lem:n3qEvenMersenneAPower}
If $q\neq 2$ is a power of $2$ and $p_0=q-1$ is a prime.Suppose the projective order of a matrix $A\in\GL_3(\F_q)-Z(\GL_n(\F_q))$ is a power of $p_0$, then $A$ is diagonalizable over $\F_q$ or its characteristic polynomial is irreducible. In particular, either $A$ is a pivot matrix itself, or $A$ is as described in Proposition~\ref{prop:ConnObstDiag} or Proposition~\ref{prop:ConnObstIrred}.
\end{lem}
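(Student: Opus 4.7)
The plan is to classify $A$ up to similarity using Lemma~\ref{Lem:NonJordayTypeClassify}, and then eliminate the LP possibility using the arithmetic $q^2-1 = p_0(q+1)$.

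First I would observe that if the projective order of $A$ is $p_0^m$, then $A^{p_0^m}=\lambda I$ for some $\lambda\in\F_q^*$, and raising to the $(q-1)$-th power gives $A^{p_0^m(q-1)}=I$. So the multiplicative order of $A$ divides $p_0^{m+1}$; in particular it is coprime to the characteristic $2$. Applying Lemma~\ref{Lem:NonJordayTypeClassify}, the generalized Jordan canonical form of $A$ is a block-diagonal matrix each of whose blocks is either $1\times 1$ or a companion matrix of an irreducible polynomial. For $n=3$ the only possibilities are: three $1\times 1$ blocks (so $A$ is diagonalizable over $\F_q$), one $1\times 1$ block together with a $2\times 2$ companion block (LP), or a single $3\times 3$ companion block (i.e., the characteristic polynomial of $A$ is irreducible).

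Next I would rule out the LP case. Suppose $A$ is similar to $\begin{bmatrix}a&\\&C\end{bmatrix}$ with $C$ the companion matrix of an irreducible degree-$2$ polynomial. Then $C^{p_0^m}$ is a scalar multiple of $I_2$, so the multiplicative order of $C$ divides $p_0^{m+1}$. Since $C\in\GL_2(\F_q)$, the order of $C$ also divides $q^2-1 = p_0(q+1)$. Because $p_0=q-1$ is an odd prime and $q+1 = p_0+2$, we have $\gcd(p_0,q+1)=1$, hence the order of $C$ divides $p_0 = q-1$. But then Corollary~\ref{Cor:CompanionPower} applied with $t=1$ forces $C$ to be a scalar matrix, contradicting the irreducibility of its characteristic polynomial.

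Finally, in the remaining cases $A$ is either diagonalizable over $\F_q$ or has irreducible characteristic polynomial. Since $A\notin Z(\GL_3(\F_q))$, in the diagonalizable subcase $A$ has either two or three distinct eigenvalues: two makes $A$ itself a pivot matrix, and three puts $A$ in LLL form, so that Proposition~\ref{prop:ConnObstDiag} applies (we have $n=3<q$ since $q\geq 4$, and $q-1$ is prime). In the irreducible subcase, $n=3$ is prime and Proposition~\ref{prop:ConnObstIrred} applies. The main obstacle is the LP elimination, which crucially uses $p_0=q-1$ rather than merely some prime factor of $q-1$; the coprimality $\gcd(p_0,q+1)=1$ is what pins the order of $C$ down to a divisor of $q-1$, after which Corollary~\ref{Cor:CompanionPower} delivers the contradiction.
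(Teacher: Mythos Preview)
Your proof is correct and follows essentially the same route as the paper: reduce to block-diagonal form via Lemma~\ref{Lem:NonJordayTypeClassify}, then eliminate the $2\times 2$ companion block by observing that $p_0\nmid q+1$ forces its order to divide $q-1$, whence Corollary~\ref{Cor:CompanionPower} gives the contradiction. You are a bit more explicit than the paper in checking the hypotheses (coprimality to the characteristic, $n<q$ for Proposition~\ref{prop:ConnObstDiag}) and in spelling out the ``In particular'' clause, but the substance is the same.
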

\begin{proof}
Since the projective order of $A$ is a power of $p_0$, which is odd, by Lemma~\ref{Lem:NonJordayTypeClassify}, the generalized Jordan canonical form of $A$ is block diagonal with companion matrices of irreducible polynomials as diagonal blocks. 

If $C$ is one of these blocks and it is $m\times m$ for some $m>1$, then by Corollary~\ref{Cor:CompanionPower}, its multiplicative order must divide $q^m-1$, but cannot divide $q-1$. Hence if the projective order of a matrix $A\in\GL_3(\F_q)-Z(\GL_n(\F_q))$ is a power of $p_0$, then $q^m-1$ must be a multiple of $p_0^2$. Since $q$ is $1$ modulus the odd prime $p_0$, clearly $q+1$ is cannot be a multiple of $p_0$. So $q^2-1$ is not a multiple of $p_0^2$. So $m\neq 2$. Hence all diagonal blocks in the generalized Jordan canonical form of $A$ must be $1\times 1$ or $3\times 3$. If one block is $3\times 3$, then $A$ has irreducible polynomial. If all blocks are $1\times 1$, then $A$ is diagonalizable over $\F_q$.
\end{proof}

\section{Upper Bounds when $q-1$ is not a prime power}

Let $q\neq 2$ be a power of a prime $p$. Suppose $q-1$ is not a prime power. Fix any prime factor $p_0$ of $q-1$. Our goal here is to find a short path (distance at most $4$) in the projectively reduced power graph of $\GL_3(\F_q)$ between any two pivot matrices whose multiplicative order is $p_0$.

Pick any $b\in\F_q$ such that $x\neq 0,1$. In the group $\GL_3(\F_q)$, let $S_1,S_2,S_3$ be the centralizers of $\begin{bmatrix}1&&\\&b&\\&&b\end{bmatrix},\begin{bmatrix}b&&\\&1&\\&&b\end{bmatrix},\begin{bmatrix}b&&\\&b&\\&&1\end{bmatrix}$ respectively. Let $S_J$ be the centralizer of $\begin{bmatrix}1&&\\&1&1\\&&1\end{bmatrix}$. Finally, let $S=S_2\cup S_3\cup S_J$.

\begin{lem}
\label{Lem:n3q-1NotPowerCentralGenerate}
Any $X\in\GL_3(\F_q)$, then $X=X_1X_2X_3$ for some matrices $X_1,X_3\in S_1$ and $X_2\in S$.
\end{lem}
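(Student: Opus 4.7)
The plan is to work with the block form of $X$ adapted to the structure of $S_1$. Writing $X = \begin{bmatrix} \alpha & \beta \\ \gamma^T & D \end{bmatrix}$ with $\alpha \in \F_q$, $\beta, \gamma \in \F_q^2$ (as row vectors), and $D \in M_2(\F_q)$, and an element of $S_1$ as $\begin{bmatrix}\alpha_i & 0 \\ 0 & M_i\end{bmatrix}$ with $\alpha_i \in \F_q^*$ and $M_i \in \GL_2(\F_q)$, we obtain
\[
X_1 X X_3 = \begin{bmatrix} \alpha_1 \alpha_3 \alpha & \alpha_1 \beta M_3 \\ \alpha_3 M_1 \gamma^T & M_1 D M_3 \end{bmatrix}.
\]
Membership in $S_2$, $S_3$, or $S_J$ is characterized by specific zero patterns (with $S_J$ also requiring the $(2,2)$ and $(3,3)$ entries to coincide), so the task is to choose $M_1, M_3$ (and the scalars) so that the product matches one of these patterns.

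The key scalar invariant is $\gamma^* D \beta^*$, where $\beta^* = (-\beta_2, \beta_1)^T$ and $\gamma^* = (-\gamma_2, \gamma_1)$ are the perpendicular rotations satisfying $\beta \beta^* = 0$ and $\gamma^* \gamma^T = 0$. A cofactor expansion of $\det X$ along its first row yields the identity
\[
\gamma^* D \beta^* = \alpha \det D - \det X.
\]
In particular, since $\det X \neq 0$, the vanishing $\gamma^* D \beta^* = 0$ automatically forces $\alpha \neq 0$ and $D \in \GL_2(\F_q)$.

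The case analysis splits as follows. When $\beta, \gamma \neq 0$ and $\gamma^* D \beta^* \neq 0$, I aim for the $S_3$ form: the four required zero entries translate into $v_2 \parallel \beta^*$ and $u_2 \parallel \gamma^*$ (for the second column of $M_3$ and the second row of $M_1$), together with $v_1 \in (\gamma^* D)^\perp$ and $u_1 \in (D\beta^*)^\perp$, making $M_1 D M_3$ diagonal by construction; the invertibility of $M_1$ and $M_3$ reduces precisely to the lines $(D\beta^*)^\perp$ and $\langle \gamma^*\rangle$ being distinct, which is exactly $\gamma^* D \beta^* \neq 0$. The degenerate subcases $\beta = 0$ or $\gamma = 0$ also yield to the $S_3$ form, with some constraints now vacuous, allowing one simply to diagonalize $D$ via $M_1 D M_3$. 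In the remaining case $\beta, \gamma \neq 0$ with $\gamma^* D \beta^* = 0$, the $S_3$ and $S_2$ attempts break down (their bottom-right blocks would have a forced zero diagonal entry), but the $S_J$ form works: setting $v_1 \parallel \beta^*$ and $u_2 \parallel \gamma^*$ makes the $(3,2)$ zero automatic from the vanishing invariant, and the equality of the $(2,2)$ and $(3,3)$ entries of $M_1 D M_3$ reduces to the single scalar equation $d_1 (u_1 D \beta^*) = c_2 (\gamma^* D v_2)$ in the free scalars $c_2, d_1$.

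The main obstacle throughout is ensuring invertibility of the $\GL_2$ blocks $M_1$ and $M_3$, and the invariant $\gamma^* D \beta^*$ governs exactly this. In the generic case, its non-vanishing separates the constraint lines on $u_1$ versus $u_2$ (and on $v_1$ versus $v_2$), leaving enough freedom for each of $M_1, M_3$ to be invertible. In the degenerate case, its vanishing forces the coincidence $\langle \gamma^*\rangle = (D\beta^*)^\perp$, which in turn ensures $u_1 D \beta^* \neq 0$ whenever $u_1 \not\parallel u_2$, and symmetrically $\gamma^* D v_2 \neq 0$ whenever $v_2 \not\parallel v_1$; both conditions are therefore subsumed by the invertibility of $M_1, M_3$, so the residual scalar equation for $c_2, d_1$ always has a nonzero solution.
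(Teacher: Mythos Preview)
Your argument is correct, and it takes a somewhat different route from the paper's. Both proofs hinge on a single scalar invariant attached to the block decomposition of $X$, but the paper uses $\bs v^{\T}B^{-1}\bs w$ (defined only when the lower $2\times 2$ block $B$ is invertible), whereas your $\gamma^{*}D\beta^{*}$ is defined in all cases and satisfies the clean identity $\gamma^{*}D\beta^{*}=\alpha\det D-\det X$; when $D$ is invertible the two invariants differ by a factor of $\det D$, so the case splits agree. The paper handles the singular-$D$ case separately and lands it in $S_2$, while your Case~1 absorbs it directly into $S_3$ (since $\det D=0$ and $\det X\neq 0$ force $\gamma^{*}D\beta^{*}\neq 0$). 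Consequently you never invoke $S_2$ at all, so you in fact establish the slightly sharper statement $\GL_3(\F_q)=S_1(S_3\cup S_J)S_1$. The trade-off is that the paper's proof exhibits explicit factorizations in each case, whereas yours is a more structural perpendicular-line argument. Two cosmetic points: your ``free scalars $c_2,d_1$'' in the $S_J$ case appear to be the scaling factors on $v_1\parallel\beta^{*}$ and $u_2\parallel\gamma^{*}$ (so $c_1,d_2$ in the natural indexing), and the degenerate subcases $\beta=0$ or $\gamma=0$ still carry the constraint $u_2\gamma^{T}=0$ (resp.\ $\beta v_2=0$), not merely the diagonalization of $D$ --- but your general mechanism handles these without change, so the terse treatment is harmless.
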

\begin{proof}
Suppose $X=\begin{bmatrix}a&\bs v^{\T}\\ \bs w& B\end{bmatrix}$ for $\bs v,\bs w\in\F_q^2$ and a $2\times 2$ matrix $B$ over $\F_q$.

Suppose $B$ is not invertible. Since $X$ is invertible, we cannot have $B=\begin{bmatrix}0&0\\0&0\end{bmatrix}$. Hence $B$ has rank $1$, and we can find $B=B_1\begin{bmatrix}1&0\\0&0\end{bmatrix}B_2$ for some invertible $B_1,B_2\in\GL_2(\F_q)$. Then 
\[
X=\begin{bmatrix}1&\\&B_1\end{bmatrix}\begin{bmatrix}a&x&y\\c&1&0\\d&0&0\end{bmatrix}\begin{bmatrix}1&\\&B_2\end{bmatrix}.
\]

Since $X$ is invertible, we must have $d,y\neq 0$. So we have
\[
X=\begin{bmatrix}1&\\&B_1\end{bmatrix}
\begin{bmatrix}1&&\\&1&\frac{c}{d}\\&&1\end{bmatrix}
\begin{bmatrix}a&0&y\\0&1&0\\d&0&0\end{bmatrix}
\begin{bmatrix}1&&\\&1&\\&\frac{x}{y}&1\end{bmatrix}
\begin{bmatrix}1&\\&B_2\end{bmatrix}\in S_1S_2S_1\subseteq S_1SS_1.
\]

Suppose $\bs v=\bs w=\bs 0$. Then $X\in S_1\subseteq S_1SS_1$.

Suppose $\bs v=\bs 0$ but $\bs w\neq\bs 0$. Then $B$ must be invertible. Pick $Y\in\GL_2(\F_q)$ such that $BY^{-1}\begin{bmatrix}1\\0\end{bmatrix}=\bs w$. Then
\[
X=\begin{bmatrix}1&\\&BY^{-1}\end{bmatrix}
\begin{bmatrix}a&&\\1&1&\\&&1\end{bmatrix}\begin{bmatrix}1&\\&Y\end{bmatrix}\in S_1S_3S_1\subseteq S_1SS_1.
\]

Similarly, if $\bs w=\bs 0$ but $\bs v\neq \bs 0$, then again $X\in S_1S_3S_1\subseteq S_1SS_1$.

From now on, we assume that $B$ is invertible and $\bs v,\bs w\neq\bs 0$. Pick $Y\in\GL_2(\F_q)$ such that $BY^{-1}\begin{bmatrix}1\\0\end{bmatrix}=\bs w$. Suppose $\bs v^{\T}Y^{-1}=\begin{bmatrix}x&y\end{bmatrix}$. Then
\[
X=\begin{bmatrix}1&\\&BY^{-1}\end{bmatrix}
\begin{bmatrix}a&x&y\\1&1&\\&&1\end{bmatrix}\begin{bmatrix}1&\\&Y\end{bmatrix}.
\]

Here, $x$ is actually determined by $\bs v,\bs w,B$ alone. Indeed we have
\[
x=\begin{bmatrix}x&y\end{bmatrix}\begin{bmatrix}1\\0\end{bmatrix}=\bs v^{\T}Y^{-1}YB^{-1}\bs w=\bs v^{\T}B^{-1}\bs w.
\]

If $\bs v^{\T}B^{-1}\bs w\neq 0$, then we further have
\[
X=\begin{bmatrix}1&\\&BY^{-1}\end{bmatrix}
\begin{bmatrix}1&&\\&1&-\frac{y}{x}\\&&1\end{bmatrix}
\begin{bmatrix}a&x&0\\1&1&\\&&1\end{bmatrix}
\begin{bmatrix}1&&\\&1&\frac{y}{x}\\&&1\end{bmatrix}
\begin{bmatrix}1&\\&Y\end{bmatrix}\in S_1S_3S_1\subseteq S_1SS_1.
\]

Now suppose $\bs v^{\T}B^{-1}\bs w=0$. Then we have
\[
X=\begin{bmatrix}1&\\&BY^{-1}\end{bmatrix}
\begin{bmatrix}a&0&y\\1&1&\\&&1\end{bmatrix}\begin{bmatrix}1&\\&Y\end{bmatrix}\in S_1S_JS_1\subseteq S_1SS_1.
\]
\end{proof}

\begin{cor}
\label{Cor:n3qNotPowerPivotDistance}
Let $q\neq 2$ be any prime power such that $q-1$ is not a prime power. Let $p_0$ be any prime factor of $q-1$. Then in the projectively reduced power graph of $\GL_3(\F_q)$, any two pivot matrices with multiplicative order $p_0$ has distance at most $4$.
\end{cor}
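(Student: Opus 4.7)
The plan is to reduce the pivot-to-pivot distance bound to a small number of model cases by combining Lemma~\ref{Lem:n3q-1NotPowerCentralGenerate} with the observation that conjugation by an element of a centralizer acts on the projectively reduced power graph as an automorphism that fixes the centralized matrix.

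Let $P$ and $P'$ be two pivot matrices of multiplicative order $p_0$. Since all pivots of a fixed eigenvalue pattern are $\GL_3(\F_q)$-conjugate, and pivots of related patterns are joined by inversion-adjacency in the graph, we may assume $P = \mathrm{diag}(1,b,b)$ with $b$ of order $p_0$ and write $P' = YPY^{-1}$ for some $Y \in \GL_3(\F_q)$. Applying Lemma~\ref{Lem:n3q-1NotPowerCentralGenerate} we decompose $Y = Y_1 Y_2 Y_3$ with $Y_1, Y_3 \in S_1$ and $Y_2 \in S_2 \cup S_3 \cup S_J$. Because $Y_3$ centralizes $P$ and conjugation by $Y_1 \in S_1$ is a graph automorphism fixing $P$, we obtain
\[
d(P, P') \;=\; d\!\left(P,\, Y_2 P Y_2^{-1}\right).
\]
The problem thus reduces to showing $d(P,\, Y_2 P Y_2^{-1}) \le 4$ in each of the three cases $Y_2 \in S_2$, $Y_2 \in S_3$, $Y_2 \in S_J$.

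For each case, $Y_2$ centralizes a corresponding reference matrix $P_\ast \in \{P_2^0, P_3^0, J_0\}$, so conjugation by $Y_2$ acts as a graph automorphism fixing $P_\ast$. This symmetry lets us build a symmetric $4$-path of the form
\[
P \;\text{---}\; A \;\text{---}\; B \;\text{---}\; Y_2 A Y_2^{-1} \;\text{---}\; Y_2 P Y_2^{-1},
\]
where $A$ is a matrix adjacent to $P$ in the graph (for example, a diagonal root satisfying $A^k = P$), its $Y_2$-conjugate $Y_2 A Y_2^{-1}$ is automatically adjacent to $Y_2 P Y_2^{-1}$, and $B$ is a bridging matrix adjacent to both $A$ and $Y_2 A Y_2^{-1}$. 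The existence of such an $A$ together with a compatible $B$ relies crucially on the hypothesis that $q-1$ is not a prime power: by Lemma~\ref{Lem:ConseqPP}, $q-1$ admits at least two distinct prime factors, say $p_0$ and $p_1$, which gives enough roots of unity in $\F_q^\ast$ to construct $A$ of order divisible by both $p_0$ and $p_1$ and a bridging $B$ whose powers realize both $A$ and $Y_2 A Y_2^{-1}$.

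The main obstacle will be the case $Y_2 \in S_J$. The centralizer of the Jordan pivot mixes a torus with a unipotent radical and lacks the clean Levi decomposition enjoyed by $S_2$ and $S_3$; hence the bridging matrix $B$ must respect simultaneously the diagonal structure inherited from $A$ and the Jordan-block structure carried by $Y_2$. We address this by combining $p_0$-th roots from the diagonal torus with the order-$p$ unipotent element arising from $J_0 - I$, producing a $B$ that is adjacent in the graph to both $A$ and $Y_2 A Y_2^{-1}$. Having verified the $4$-path in each of the three cases, the corollary follows.
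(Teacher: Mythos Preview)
Your overall strategy matches the paper's: apply Lemma~\ref{Lem:n3q-1NotPowerCentralGenerate} to factor the conjugating element, absorb the $S_1$-factors, and bridge through a matrix centralized by the middle factor. The gap is in your opening reduction. You assert that ``pivots of related patterns are joined by inversion-adjacency'' and hence that you may take $P=\mathrm{diag}(1,b,b)$ and $P'=YPY^{-1}$. That reduction would be fine in the reduced power graph of $\PGL_3(\F_q)$, where every order-$p_0$ pivot is projectively a power of $\mathrm{diag}(1,b,b)$; but the corollary is about the projectively reduced power graph of $\GL_3(\F_q)$, where scalars are \emph{not} quotiented out. There, two order-$p_0$ pivots need be neither conjugate nor adjacent: for $p_0\ge 5$ and $\zeta$ a primitive $p_0$-th root of unity, $\mathrm{diag}(\zeta,\zeta^2,\zeta^2)$ and $\mathrm{diag}(\zeta,\zeta^3,\zeta^3)$ share the same eigenspace decomposition yet neither is a power of the other. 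Consequently your symmetric path $P\text{---}A\text{---}B\text{---}Y_2AY_2^{-1}\text{---}Y_2PY_2^{-1}$ only treats the conjugate case, and even the normalization $P=\mathrm{diag}(1,b,b)$ is unavailable in $\GL_3$.

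The paper handles this by never assuming conjugacy: it keeps distinct diagonal forms $D_A=\mathrm{diag}(a_1,a_2,a_2)$ and $D_B=\mathrm{diag}(b_1,b_2,b_2)$, and picks the bridge $D$ to lie simultaneously in $S_1$ and in the center of whichever of $S_2,S_3,S_J$ contains $X_2$ --- explicitly $D=\mathrm{diag}(x,1,x)$, $\mathrm{diag}(x,x,1)$, or the Jordan pivot $\left[\begin{smallmatrix}1&&\\&1&1\\&&1\end{smallmatrix}\right]$, with $x\in\F_q^*$ of prime order $p_1\ne p_0$. Because $D$ has order coprime to $p_0$ and commutes with everything in $S_1$ (hence with both $D_A$ and $D_B$), each of $D_A,D_B$ is a power of $D_AD$ (resp.\ $D_BD$) and so lies at distance~$2$ from $D$; since $X_2$ centralizes $D$, the conjugation is absorbed at the midpoint, yielding the \emph{asymmetric} length-$4$ path $D_A\text{---}D_AD\text{---}D\text{---}DD_B\text{---}D_B$ (suitably conjugated). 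Note in particular that the $S_J$ case you single out as the hardest is in fact the simplest: the bridge is the Jordan pivot itself, of order $p$.
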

\begin{proof}
Suppose $A,B$ are two pivot matrices in $\GL_3(\F_q)$ with multiplicative order $p_0$. Say $A=X_A\begin{bmatrix}a_1&&\\&a_2&\\&&a_2\end{bmatrix}X_A^{-1}$ and $B=X_B\begin{bmatrix}b_1&&\\&b_2&\\&&b_2\end{bmatrix}X_B^{-1}$. Let $X=X_A^{-1}X_B$. By Lemma~\ref{Lem:n3q-1NotPowerCentralGenerate}, $X=X_1X_2X_3$ for some matrices $X_1,X_3\in S_1$ and $X_2\in S$.

Note that $\begin{bmatrix}a_1&&\\&a_2&\\&&a_2\end{bmatrix},\begin{bmatrix}b_1&&\\&b_2&\\&&b_2\end{bmatrix}$ both have centralizer $S_1$. Since $q-1$ is not a prime power, we can find a prime factor $p_1$ of $q-1$ different from $p_0$. Let $x\in\F_q^*$ be any element with multiplicative order $p_1$. If $X_2\in S_2$, $S_3$ or $S_J$, then we set $D=\begin{bmatrix}x&&\\&1&\\&&x\end{bmatrix}$, $\begin{bmatrix}x&&\\&x&\\&&1\end{bmatrix}$ or $\begin{bmatrix}1&&\\&1&1\\&&1\end{bmatrix}$ respectively. 

Then $D$ has multiplicative order $p_1$ or $p$, which is coprime to $p_0$ either way. Furthermore, we always have $\begin{bmatrix}a_1&&\\&a_2&\\&&a_2\end{bmatrix}D=D\begin{bmatrix}a_1&&\\&a_2&\\&&a_2\end{bmatrix}$ and $\begin{bmatrix}b_1&&\\&b_2&\\&&b_2\end{bmatrix}D=D\begin{bmatrix}b_1&&\\&b_2&\\&&b_2\end{bmatrix}$. Consequently, $\begin{bmatrix}a_1&&\\&a_2&\\&&a_2\end{bmatrix}$ and $D$ are both powers of $\begin{bmatrix}a_1&&\\&a_2&\\&&a_2\end{bmatrix}D$, and in the same manner, $\begin{bmatrix}b_1&&\\&b_2&\\&&b_2\end{bmatrix}$ and $D$ are both powers of $\begin{bmatrix}b_1&&\\&b_2&\\&&b_2\end{bmatrix}D$. Therefore, in the projectively reduced power graph of $\GL_3(\F_q)$, $\begin{bmatrix}a_1&&\\&a_2&\\&&a_2\end{bmatrix}$ and $\begin{bmatrix}b_1&&\\&b_2&\\&&b_2\end{bmatrix}$ both have distance at most $2$ to $D$.

So we have path
\begin{align*}
&A=X_A\begin{bmatrix}a_1&&\\&a_2&\\&&a_2\end{bmatrix}X_A^{-1}=X_AX_1\begin{bmatrix}a_1&&\\&a_2&\\&&a_2\end{bmatrix}X_1^{-1}X_A^{-1}\\
\to &X_AX_1DX_1^{-1}X_A^{-1}=X_AX_1X_2DX_2^{-1}X_1^{-1}X_A^{-1}\\
\to &X_AX_1X_2\begin{bmatrix}b_1&&\\&b_2&\\&&b_2\end{bmatrix}X_2^{-1}X_1^{-1}X_A^{-1}\\
=&X_AX_1X_2X_3\begin{bmatrix}b_1&&\\&b_2&\\&&b_2\end{bmatrix}X_3^{-1}X_2^{-1}X_1^{-1}X_A^{-1}\\
=&X_AX\begin{bmatrix}b_1&&\\&b_2&\\&&b_2\end{bmatrix}X^{-1}X_A^{-1}=X_B\begin{bmatrix}b_1&&\\&b_2&\\&&b_2\end{bmatrix}X_B^{-1}=B.
\end{align*}

Here each arrow means distance at most $2$. So $A,B$ have distance at most $4$ between them.
\end{proof}

\begin{prop}
If $q$ is an odd prime power, and $q-1$ is not a prime power, then in the reduce power graph of $\PGL_3(\F_q)$, the pivot component has diameter at most $8$.
\end{prop}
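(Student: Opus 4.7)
The plan is to combine Corollary~\ref{Cor:n3qOddPath2Pivot} and Corollary~\ref{Cor:n3qNotPowerPivotDistance} through a ``hub'' consisting of pivot matrices of multiplicative order $2$. Since $q$ is odd, $2$ is a prime factor of $q-1$, and since $q-1$ is not a prime power by hypothesis, Corollary~\ref{Cor:n3qNotPowerPivotDistance} applies with the choice $p_0=2$. This is the only choice that interacts well with Corollary~\ref{Cor:n3qOddPath2Pivot}, so I would fix it throughout.

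First I would take two arbitrary vertices $[A]$ and $[B]$ in the pivot component of the reduced power graph of $\PGL_3(\F_q)$, lifting them to matrices $A, B \in \GL_3(\F_q) - Z(\GL_3(\F_q))$. Because each image lies in the pivot component, each already has a path to the image of some pivot matrix, so the second (``no path'') alternative of Corollary~\ref{Cor:n3qOddPath2Pivot} is excluded. Invoking that corollary would then produce pivot matrices $A', B'$ of multiplicative order $2$ with $d([A],[A']) \leq 2$ and $d([B],[B']) \leq 2$ in the reduced power graph of $\PGL_3(\F_q)$.

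Next I would apply Corollary~\ref{Cor:n3qNotPowerPivotDistance} with $p_0 = 2$ to conclude that $A'$ and $B'$ have distance at most $4$ in the projectively reduced power graph of $\GL_3(\F_q)$. Since the projection from the projectively reduced power graph of $\GL_3(\F_q)$ to the reduced power graph of $\PGL_3(\F_q)$ is a graph quotient (by the proposition in Section~2.1), we get $d([A'],[B']) \leq 4$ in the latter graph as well. The triangle inequality then yields $d([A],[B]) \leq 2 + 4 + 2 = 8$, as desired.

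The main obstacle is already absorbed into the two corollaries above; in particular, the centralizer decomposition (Lemma~\ref{Lem:n3q-1NotPowerCentralGenerate}) that feeds Corollary~\ref{Cor:n3qNotPowerPivotDistance} is where the real work happens. The present proposition is essentially a three-segment assembly, and the only care required is checking that every vertex in the pivot component lifts to a matrix outside $Z(\GL_3(\F_q))$, which is automatic since the projectively reduced power graph excludes the center.
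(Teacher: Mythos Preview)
Your proof is correct and essentially identical to the paper's: both use Corollary~\ref{Cor:n3qOddPath2Pivot} to reach pivot matrices of order $2$ within distance $2$, then Corollary~\ref{Cor:n3qNotPowerPivotDistance} with $p_0=2$ for the central segment of length at most $4$, assembling to $2+4+2=8$. Your explicit exclusion of the ``no path'' alternative via membership in the pivot component is a clean way to invoke the corollary; the paper instead notes decomposability via Figure~\ref{Fig:n3q-1NotPowerJordanConn}, but this amounts to the same thing.
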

\begin{proof}
Suppose $A,B$ are any two matrices in the pivot component. By Figure~\ref{Fig:n3q-1NotPowerJordanConn}, all matrices in the pivot component are decomposable. Since $q$ is odd, by Corollary~\ref{Cor:n3qOddPath2Pivot}, in the reduce power graph of $\PGL_3(\F_q)$, images $A,B$ will respectively have distance at most $2$ to images of some pivot matrices $A',B'$ with multiplicative order $2$. Then by Corollary~\ref{Cor:n3qNotPowerPivotDistance}, images of $A',B'$ have distance at most $4$ between them. So in total, $A,B$ have distance at most $2+2+4=8$ between them.
\end{proof}

\begin{prop}
If $q$ is power of $2$, and $q-1$ is not a prime, then in the reduce power graph of $\PGL_3(\F_q)$, the pivot component has diameter at most $10$.
\end{prop}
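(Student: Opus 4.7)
The plan is to mirror the preceding proposition's argument but with distance bookkeeping adapted to even characteristic. Since $q$ is a power of $2$ with $q-1$ not prime, Lemma~\ref{Lem:ConseqPP} forces $q-1$ to not be a prime power, so I fix two distinct prime factors $p_0, p_1$ of $q-1$. Take any $A, B$ whose images lie in the pivot component; by Lemma~\ref{Lem:n3ClassifyPivotComponent}, each of $A, B$ is decomposable or, since $q$ is even, an NPJ matrix.

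For any such $M \in \{A, B\}$ and any prime $p \mid q-1$, the distance in the reduced power graph of $\PGL_3(\F_q)$ from the image of $M$ to the image of some pivot matrix of multiplicative order $p$ is at most $2$ when $M$ is decomposable with projective order coprime to $p$ (Lemma~\ref{Lem:n3qPrimeAvoidADecomposable}); at most $3$ when $M$ is decomposable with projective order not a power of $p$ (Lemma~\ref{Lem:n3qEvenANotPower}); at most $3$ when $M$ is NPJ (Lemma~\ref{Lem:n3qEvenANPJ}); and at most $4$ when $M$ is decomposable with projective order a power of $p$ (Lemma~\ref{Lem:n3qEvenNotMersenneAPower}). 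Since the projective order of a non-central matrix is a power of at most one prime, at most one of $p_0, p_1$ triggers the worst-case bound of $4$ for $A$, and likewise for $B$; call such a prime \emph{bad} for the matrix in question. NPJ matrices, having projective order $2$ and $q-1$ odd, are never bad for any $p \mid q-1$.

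I would then choose $p^* \in \{p_0, p_1\}$ according to the bad primes. If some $p^*$ is non-bad for both $A$ and $B$, then each distance from $A, B$ to a pivot of order $p^*$ is at most $3$, and Corollary~\ref{Cor:n3qNotPowerPivotDistance} provides distance at most $4$ between any two pivots of order $p^*$, for a total of at most $3 + 4 + 3 = 10$. Otherwise $p_0$ is bad for one matrix (say $A$) and $p_1$ is bad for the other ($B$); choosing $p^* = p_0$, the distance from $A$ is at most $4$, while the projective order of $B$ is a power of $p_1$ and hence coprime to $p_0$, so Lemma~\ref{Lem:n3qPrimeAvoidADecomposable} sharpens the distance from $B$ to at most $2$. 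Again the total is $4 + 4 + 2 = 10$.

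The main obstacle is recognizing the symmetry in the ``both bad'' subcase: the very condition making $p_0$ bad for $A$ (namely, that $A$'s projective order is a power of $p_0$) simultaneously makes $p_1$ coprime to $A$'s projective order, and likewise for $B$, so the $+1$ loss in one of the two outer distances is always offset by a $-1$ savings in the other. Once this trade-off is spotted, every case collapses to the desired bound of $10$; the rest is a routine invocation of the distance lemmas listed above.
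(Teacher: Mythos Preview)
Your argument is correct and follows essentially the same route as the paper: fix two distinct prime factors $p_0,p_1$ of $q-1$, split into the case where some $p^*$ avoids the worst bound for both matrices (giving $3+4+3$) versus the case where each prime is ``bad'' for a different matrix (giving $4+4+2$), invoking Lemmas~\ref{Lem:n3qPrimeAvoidADecomposable}, \ref{Lem:n3qEvenANotPower}, \ref{Lem:n3qEvenANPJ}, \ref{Lem:n3qEvenNotMersenneAPower} and Corollary~\ref{Cor:n3qNotPowerPivotDistance} exactly as the paper does. One small slip: an NPJ matrix in characteristic $2$ has projective order $4$, not $2$ (its square is a Jordan pivot, not a scalar), but since $4$ is still coprime to every prime factor of the odd number $q-1$, your conclusion that NPJ matrices are never ``bad'' stands and the argument is unaffected.
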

\begin{proof}
Suppose $A,B$ are any two matrices in the pivot component. Then by Lemma~\ref{Lem:n3ClassifyPivotComponent}, each of them is either decomposable, or an NPJ matrix. 

Let $p_0,p_1$ be any two distinct prime factors of $q-1$. Suppose $A,B$ are both not a decomposable matrix whose projective order is a power of $p_i$. Then by Lemma~\ref{Lem:n3qEvenANotPower} and Lemma~\ref{Lem:n3qEvenANPJ}, in the reduce power graph of $\PGL_3(\F_q)$, the images of $A$ and $B$ will respectively have distance at most $3$ to the images of some pivot matrices $A'$ and $B'$ with multiplicative order $p_i$. Then by Corollary~\ref{Cor:n3qNotPowerPivotDistance}, the distance between images of $A',B'$ is at most $4$. Therefore the total distance between images of $A,B$ is at most $3+3+4=10$.

If the above assumption does not happen, then one of $A,B$ is a decomposable matrix whose projective order is a power of $p_0$, and the other is a decomposable matrix whose projective order is a power of $p_1$. Say $A$ has projective order a power of $p_0$ and $B$ has projective order a power of $p_1$. Then by Lemma~\ref{Lem:n3qEvenNotMersenneAPower}, the image of $A$ has distance at most $4$ to the image of a pivot matrix $A'$ with multiplicative order $p_0$. By Lemma~\ref{Lem:n3qPrimeAvoidADecomposable}, the image of $B$ has distance at most $2$ to the image of a pivot matrix $B'$ with multiplicative order $p_0$. Then by Corollary~\ref{Cor:n3qNotPowerPivotDistance}, the distance between images of $A',B'$ is at most $4$. Therefore the total distance between images of $A,B$ is at most $4+2+4=10$.
\end{proof}

\section{Upper bound when $q-1$ is a prime power}

\begin{prop}
Let $q$ be a Fermat prime or $q=9$. Then the reduced power graph of $\PGL_3(\F_q)$ has diameter at most $12$.
\end{prop}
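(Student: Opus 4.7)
Following the framework of the preceding section, by Lemma~\ref{Lem:n3ClassifyPivotComponent} and the diagrams of Section~3, only the pivot component of the reduced power graph requires a new diameter bound. Since $q$ is odd, Corollary~\ref{Cor:n3qOddPath2Pivot} gives that every matrix in the pivot component is within distance $2$ of some order-$2$ pivot, so it suffices to show that any two order-$2$ pivots are at distance at most $8$ in the projectively reduced power graph of $\GL_3(\F_q)$; this will yield the total bound $2 + 8 + 2 = 12$.

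For the pivot-to-pivot bound, I will adapt the proof of Corollary~\ref{Cor:n3qNotPowerPivotDistance}. Write two order-$2$ pivots as $A = X_A P X_A^{-1}$ and $B = X_B P X_B^{-1}$, and decompose $X_A^{-1} X_B = X_1 X_2 X_3$ with $X_1, X_3 \in S_1$ and $X_2 \in S_2 \cup S_3 \cup S_J$ via Lemma~\ref{Lem:n3q-1NotPowerCentralGenerate}. The subcase $X_2 \in S_J$ is unchanged: $D = J$ has order $p$ (odd, coprime to $2$), and the coprime-order hop of the old proof yields distance at most $4$. The difficulty is in the $S_2$ and $S_3$ subcases, since the old proof required an odd prime factor $p_1$ of $q-1$, while $q-1$ is now a power of $2$ and $Z(S_2) \cap S_1 = \{\mathrm{diag}(\lambda,\mu,\lambda)\}$ contains no nontrivial projective element of odd order.

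To bypass this obstruction, I will exploit that $q+1$ carries an odd prime factor $r$ under the hypothesis: for $q = 9$ take $r = 5$, and for any Fermat prime $q = 2^{2^k}+1 \neq 3$ we have $q+1 = 2(2^{2^k-1}+1)$ with $2^{2^k-1}+1 \geq 3$ odd. Pick $C \in \GL_2(\F_q)$ of multiplicative order $r$ with irreducible quadratic minimal polynomial; such $C$ exists because $r$ divides $q^2-1$ but not $q-1$. Then the LP matrix $\begin{bmatrix}-1 & 0 \\ 0 & C\end{bmatrix}$ has $r$-th power equal to $\mathrm{diag}(-1,1,1)$, producing a direct edge between any order-$2$ pivot and a suitably conjugate LP matrix; moreover, $\begin{bmatrix}-1 & 0 \\ 0 & C\end{bmatrix}^{r+1} = \begin{bmatrix}1 & 0 \\ 0 & C\end{bmatrix}$ is a power of odd projective order $r$, also an edge-neighbour of the LP matrix. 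The chain for the $S_2$ subcase is then organized as
\[
A \longrightarrow L_A \longrightarrow D_A \longrightarrow D_B \longrightarrow L_B \longrightarrow B,
\]
with the outer four arrows each of length $1$ (two via the edge $L^r = \text{pivot}$ and two via the edge $L^{r+1} = D$), while the middle arrow $D_A \to D_B$ between two odd-order conjugate matrices is bounded by $4$ using Corollary~\ref{Cor:n3qNotPowerPivotDistance}-style centralizer generation (now valid because the obstructing prime is $r$, while $2 \mid q-1$ is available as the companion coprime prime in the odd-order setting). Summing gives pivot-to-pivot distance at most $4 + 4 = 8$; the $S_3$ subcase is handled symmetrically.

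The main obstacle is verifying the centralizer-generation bound $d(D_A, D_B) \leq 4$ for the two odd-order LP-type matrices, which requires a decomposition of the conjugator $X_B X_A^{-1}$ compatible with the centralizer $\F_q^* \oplus \F_q[C] \subseteq \GL_3(\F_q)$ of $D_A$, together with choosing order-$2$ intermediate elements (available because $2 \mid q-1$) that sit in the appropriate intersections. Once this analogue of Lemma~\ref{Lem:n3q-1NotPowerCentralGenerate} is in place with $S_1$ replaced by the LP-matrix centralizer, the chain above assembles as described, and the overall diameter bound of $12$ follows by combining with the $2+2$ contribution of Corollary~\ref{Cor:n3qOddPath2Pivot}.
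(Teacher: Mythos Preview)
Your reduction to a distance-$8$ bound between order-$2$ pivots via Corollary~\ref{Cor:n3qOddPath2Pivot} is exactly right, and matches the paper. But from there the paper's argument is one line: Proposition~\ref{Prop:Pivot3} (imported from \cite{previous}) already asserts that any two pivot matrices in the projectively reduced power graph of $\GL_3(\F_q)$ are at distance at most $8$ whenever $q\neq 2$. That gives $2+8+2=12$ immediately, with no case analysis on $q-1$.

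Your attempt to rebuild that bound from scratch by adapting Corollary~\ref{Cor:n3qNotPowerPivotDistance} is unnecessary, and as written it does not go through. You openly flag the gap yourself: the step $d(D_A,D_B)\leq 4$ requires an analogue of Lemma~\ref{Lem:n3q-1NotPowerCentralGenerate} in which $S_1$ is replaced by the centralizer of the LP matrix $\mathrm{diag}(1,C)$, i.e.\ by $\F_q^*\oplus\F_q[C]$. That decomposition is neither stated nor proved, and it is not obvious; the centralizer of an LP matrix is a genuinely different subgroup from $S_1$, and the elementary row-reduction argument of Lemma~\ref{Lem:n3q-1NotPowerCentralGenerate} does not transfer automatically. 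Without it your chain $A\to L_A\to D_A\to D_B\to L_B\to B$ has an unjustified middle link. There is also a boundary issue: the hypothesis ``$q$ a Fermat prime'' includes $q=3$, but your argument needs an odd prime factor of $q+1$, which fails for $q=3$ since $q+1=4$. The paper's one-line proof via Proposition~\ref{Prop:Pivot3} handles all these cases uniformly.
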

\begin{proof}
Suppose $A,B$ are any two matrices in the pivot component. By Figure~\ref{Fig:n3q-1OddPowerJordanConn} and Figure~\ref{Fig:n3q=3JordanConn}, all matrices in the pivot component are decomposable. Since $q$ is odd, by Corollary~\ref{Cor:n3qOddPath2Pivot}, in the reduce power graph of $\PGL_3(\F_q)$, images $A,B$ will respectively have distance at most $2$ to images of some pivot matrices $A',B'$ with multiplicative order $2$. Then by Proposition~\ref{Prop:Pivot3}, images of $A',B'$ have distance at most $8$ between them. So in total, $A,B$ have distance at most $2+2+8=12$ between them.
\end{proof}

\begin{prop}
Let $q\neq 2$ be a power of $2$ such that $q-1$ is prime. Then the reduced power graph of $\PGL_3(\F_q)$ has diameter at most $13$.
\end{prop}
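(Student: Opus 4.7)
The plan is to route distances through pivot matrices, using Proposition~\ref{Prop:Pivot3} to pay at most $8$ between any two pivots, while giving NPJ matrices special treatment since they sit slightly farther from the pivots than the other types in the pivot component.

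The main step is a sublemma: for any non-NPJ matrix $X$ in the pivot component, the image of $X$ in the reduced power graph of $\PGL_3(\F_q)$ has distance at most $2$ from the image of some pivot matrix. By Lemma~\ref{Lem:n3ClassifyPivotComponent}, $X$ is decomposable, and since LLL matrices are isolated in the pivot component, $X$ is of pivot, LLP, Jordan pivot, or LP type. The pivot case is trivial. If $X$ is LLP, then in characteristic $2$ a direct computation shows $X^2$ is a pivot matrix, giving distance $1$. If $X$ is a Jordan pivot, its projective order is $2$, which is coprime to the odd prime $p_0 = q-1$, so Lemma~\ref{Lem:n3qPrimeAvoidADecomposable} gives distance $2$. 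The LP case is the main obstacle: Lemma~\ref{Lem:n3qEvenMersenneAPower} rules out projective order being a pure power of $p_0$ (an LP matrix is neither diagonalizable over $\F_q$ nor has irreducible characteristic polynomial), so I would write the projective order as $k = p_0^a m$ with $\gcd(m,p_0)=1$ and $m \geq 2$. When $a = 0$, Lemma~\ref{Lem:n3qPrimeAvoidADecomposable} gives distance $2$. When $a \geq 1$, the projective order of $X^m$ is $p_0^a$, which by Lemma~\ref{Lem:n3qEvenMersenneAPower} forces $X^m$ to be diagonalizable over $\F_q$ (the $\F_q$-eigenvalue $a^m$ from the upper-left block precludes an irreducible characteristic polynomial); combining with Lemma~\ref{Lem:CompanionPower}, which forces the minimal polynomial of the $2 \times 2$ block of $X^m$ to be irreducible, diagonalizability forces that block to be a scalar, so $X^m$ is a pivot matrix at distance $1$ from $X$.

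With the sublemma in hand, I would split into three cases for $(A,B)$ in the pivot component. If neither is NPJ, the sublemma plus Proposition~\ref{Prop:Pivot3} yields distance at most $2 + 8 + 2 = 12$. If both are NPJ, then $A^2, B^2$ are Jordan pivots at distance $1$ from $A, B$ respectively, and Proposition~\ref{Prop:PivotJordan} gives distance at most $8$ between $A^2$ and $B^2$, for a total of $1 + 8 + 1 = 10$. The tight case is one NPJ and one non-NPJ: if $A$ is NPJ, then $A \to A^2$ reaches a Jordan pivot in one step whose projective order $2$ is coprime to $p_0$, so Lemma~\ref{Lem:n3qPrimeAvoidADecomposable} reaches a pivot matrix $P_A$ in two further steps; the sublemma gives a pivot $P_B$ within distance $2$ of $B$; Proposition~\ref{Prop:Pivot3} gives $P_A$ and $P_B$ within distance $8$. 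The total is $1 + 2 + 8 + 2 = 13$, matching the claimed bound.
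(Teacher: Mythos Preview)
Your proof is correct and follows the same overall plan as the paper: route through pivot matrices via Proposition~\ref{Prop:Pivot3}, handle the both-NPJ case with Jordan pivots and Proposition~\ref{Prop:PivotJordan}, and bound the mixed case by $1+2+8+2=13$.

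The only substantive difference lies in how the key sublemma---that every non-NPJ matrix in the pivot component lies within distance $2$ of a pivot matrix---is justified. The paper first uses Lemma~\ref{Lem:n3qEvenMersenneAPower} to conclude that such a matrix is either already a pivot or has projective order not a power of $q-1$, and then invokes Lemma~\ref{Lem:n3qEvenANotPower}; but that lemma as stated yields only distance $\leq 3$, so the paper's citation is at best elliptical. Your type-by-type argument supplies the needed precision: squaring an LLP matrix in characteristic $2$ lands directly on a pivot; a Jordan pivot has projective order $2$, coprime to the odd prime $p_0=q-1$, so Lemma~\ref{Lem:n3qPrimeAvoidADecomposable} applies; and for LP matrices your dichotomy on whether $p_0$ divides the projective order---using Lemma~\ref{Lem:n3qEvenMersenneAPower} together with Lemma~\ref{Lem:CompanionPower} to force $X^m$ to be a genuine pivot when $p_0\mid k$---is exactly what pushes the bound from $3$ down to $2$. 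In this respect your write-up is more complete than the paper's at this step, while the global architecture of the two proofs is identical.
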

\begin{proof}
Suppose $A,B$ are any two matrices in the pivot component. By Figure~\ref{Fig:n3q-1EvenPowerJordanConn}, all matrices in the pivot component are decomposable or NJP matrices, and they cannot be LLL matrices. 

Suppose $A$ is not an NJP matrix. By Lemma~\ref{Lem:n3qEvenMersenneAPower}, either $A$ is a pivot matrix, or the projective order of $A$ is not a power of the prime $q-1$. Then by Lemma~\ref{Lem:n3qEvenANotPower}, the $A$ has distance at most $2$ to the image of a pivot matrix $A'$.

Now, if $B$ is also not an NPJ matrix, then similarly the image of $B$ has distance at most $2$ to the image of a pivot matrix $B'$. If $B$ is an NPJ matrix, by Lemma~\ref{Lem:n3qEvenANPJ}, the image of $B$ has distance at most $3$ to the image of a pivot matrix $B'$. 

By Proposition~\ref{Prop:Pivot3}, images of $A',B'$ have distance at most $8$ between them. So in total, images of $A,B$ have distance at most $2+3+8=13$ between them.

Similarly, if $B$ is not an NPJ matrix, images of $A,B$ have distance at most $2+3+8=13$ between them.

Finally, suppose $A,B$ are both NPJ matrices. Then $A^2$ and $B^2$ will be Jordan pivot matrices. By Proposition~\ref{Prop:PivotJordan}, images of $A^2,B^2$ have distance at most $8$ between them. So in total, images of $A,B$ have distance at most $1+1+8=10$ between them.

All in all, images of $A,B$ have distance at most $13$ between them.
\end{proof}

\begin{prop}
The reduced power graph of $\PGL_3(\F_3)$ has diameter at most $11$.
\end{prop}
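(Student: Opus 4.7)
The plan is to mimic the proof of the Fermat-prime proposition, reducing the problem to a short path between two pivot matrices of multiplicative order $2$ via Proposition~\ref{Prop:Pivot3}, but to shave one step off the naive $2+8+2=12$ bound by exploiting a special feature of $q=3$: among the decomposable matrices appearing in the pivot component, only Jordan pivot matrices have odd projective order.

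First I would take any two matrices $A,B$ whose images lie in the pivot component. By Lemma~\ref{Lem:n3ClassifyPivotComponent}, since $q=3$ is odd, each of $A,B$ is decomposable. LLL matrices do not occur over $\F_3$ because three distinct eigenvalues in $\F_3^*=\{1,-1\}$ is impossible, so each of $A,B$ is a pivot, LP, LLP, or Jordan pivot matrix.

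The key lemma I would then establish is that every decomposable matrix $X\in\GL_3(\F_3)-Z(\GL_3(\F_3))$ with odd projective order is a Jordan pivot matrix. Writing $X$ as similar to $\begin{bmatrix}a&\\&C\end{bmatrix}$, the projective order of $X$ equals the multiplicative order of $a^{-1}C$ inside $\GL_2(\F_3)$. Since $|\GL_2(\F_3)|=48$, the only non-trivial odd possibility is $3$, giving $(a^{-1}C)^3=I$. In characteristic $3$ this rewrites as $(a^{-1}C-I)^3=0$, and as the minimal polynomial of a $2\times 2$ matrix has degree at most $2$, the minimal polynomial must be $(x-1)^2$ (the linear option $(x-1)$ being ruled out by $X\notin Z$). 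Hence $a^{-1}C$ is similar to $\begin{bmatrix}1&1\\&1\end{bmatrix}$, making $X$ similar to $\begin{bmatrix}a&&\\&a&a\\&&a\end{bmatrix}$, a Jordan pivot matrix.

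With this key lemma together with Corollary~\ref{Cor:n3qOddPath2Pivot} (itself proved via Lemma~\ref{Lem:n3qOddAEven} and Lemma~\ref{Lem:n3qPrimeAvoidADecomposable}), any non-Jordan-pivot matrix in the pivot component must have even projective order and so its image lies within distance $1$ of a pivot matrix of multiplicative order $2$, whereas the image of a Jordan pivot matrix lies within distance $2$ of such a pivot. A short case split finishes the argument: when both $A,B$ are Jordan pivot matrices, Proposition~\ref{Prop:PivotJordan} already gives distance at most $8$; otherwise at most one of $A,B$ is a Jordan pivot, and combining the two reductions with Proposition~\ref{Prop:Pivot3} (distance at most $8$ between pivot matrices) yields the desired bound $1+8+2=11$. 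The only delicate step is the key lemma; everything else is bookkeeping against results already established.
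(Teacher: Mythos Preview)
Your proposal is correct and follows essentially the same route as the paper: the same case split on whether both $A,B$ are Jordan pivot matrices (handled by Proposition~\ref{Prop:PivotJordan}), and otherwise the same $1+8+2$ count via Proposition~\ref{Prop:Pivot3} and Corollary~\ref{Cor:n3qOddPath2Pivot}. Your unified ``key lemma'' (odd projective order forces Jordan pivot) simply packages what the paper checks type by type --- LP order divides $8$, and for LLP the cube $A^3$ is pivot --- so the arguments are equivalent in content.
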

\begin{proof}
Suppose $A,B$ are any two matrices in the pivot component. If both matrices are Jordan pivot matrices, by Proposition~\ref{Prop:PivotJordan}, images of $A,B$ have distance at most $8$ between them.

Now WLOG suppose $A$ is not a Jordan pivot matrix. Then by Figure~\ref{Fig:n3q=3JordanConn}, $A$ is an LP matrix or an LLP matrix. If $A$ is an LP matrix, then its multiplicative order must divides $3^2-1=8$. So it must have even projective order. By Lemma~\ref{Lem:n3qOddAEven}, it has distance at most $1$ to the image of a pivot matrix $A'$. On the other hand, if $A$ is an LLP matrix, then $A'=A^3$ is a pivot matrix. Either way, the image of a pivot matrix $A'$ has distance $1$ to the image of $A$.

By Figure~\ref{Fig:n3q=3JordanConn}, $B$ is a decomposable matrix. Since $q$ is odd, by Corollary~\ref{Cor:n3qOddPath2Pivot}, image $B$ will have distance at most $2$ to image of some pivot matrix $B'$.

By Proposition~\ref{Prop:Pivot3}, images of $A',B'$ have distance at most $8$ between them. So in total, images of $A,B$ have distance at most $1+2+8=11$ between them.
\end{proof}

\section{Connection restrictions to pivot matrices}

First, let us discuss the cases when two matrices of the same Jordan type are connected by an edge in the reduced power graph of $\PGL_3(\F_q)$.

\begin{lem}
\label{Lem:n3SameTypeEdge}
In the reduced power graph of $\PGL_3(\F_q)$, if the image of a matrix $A$ and the image of a matrix $B$ are connected by an edge, and $A,B$ have the same Jordan type, then they have identical invariant subspaces, eigenspaces and generalized eigenspaces.
\end{lem}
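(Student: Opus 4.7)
The plan is to pass through the commutative algebra $\F_q[A]$ generated by $A$. After pulling the edge back from $\PGL_3(\F_q)$ to $\GL_3(\F_q)$, we may (after possibly swapping the roles of $A$ and $B$) assume $B=\mu A^k$ for some $\mu\in\F_q^*$ and nonzero integer $k$. In particular $B\in\F_q[A]$, so $\F_q[B]\subseteq\F_q[A]$. Since $\dim_{\F_q}\F_q[X]$ equals the degree of the minimal polynomial of $X$, and since this degree is an invariant of the Jordan type ($2$ for pivot and Jordan pivot, $3$ for each of the other non-identity types in our classification), the hypothesis that $A$ and $B$ share a Jordan type gives $\dim\F_q[A]=\dim\F_q[B]$ and hence $\F_q[A]=\F_q[B]$. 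In particular $A$ is itself a polynomial in $B$.

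The equality $\F_q[A]=\F_q[B]$ immediately yields the claim for invariant subspaces: a subspace $V\subseteq\F_q^3$ is $A$-invariant if and only if it is $B$-invariant, because each of $A,B$ is a polynomial in the other.

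For eigenspaces and generalized eigenspaces I would work over $\overline{\F_q}$. Let $V_\alpha=\ker(A-\alpha I)^3$ denote the generalized eigenspace of $A$ for an eigenvalue $\alpha$, and let $W_\beta$ denote the analogous generalized eigenspace of $B$. Every polynomial in $A$ stabilizes $V_\alpha$, so $B$ does; and since $A$ acts on $V_\alpha$ with sole eigenvalue $\alpha$, the matrix $B=\mu A^k$ acts on $V_\alpha$ with sole eigenvalue $\mu\alpha^k$. Hence $V_\alpha\subseteq W_{\mu\alpha^k}$, and by the same reasoning $\ker(A-\alpha I)\subseteq\ker(B-\mu\alpha^k I)$.

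The main obstacle is promoting these inclusions to equalities, which reduces to showing that the map $\alpha\mapsto\mu\alpha^k$ is injective on the set of distinct eigenvalues of $A$ (taken in $\overline{\F_q}$). If two distinct eigenvalues collapsed, then two distinct generalized eigenspaces of $A$ would be forced into a single generalized eigenspace of $B$, strictly reducing the number of distinct eigenvalues of $B$ and thereby changing its Jordan type --- contradicting the hypothesis. A quick check through the classification confirms that each Jordan type determines the multiset of dimensions of generalized eigenspaces, so once injectivity is in hand, the inclusions $V_\alpha\subseteq W_{\mu\alpha^k}$ must be equalities by dimension, and likewise for the eigenspaces.
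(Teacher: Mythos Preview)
Your proof is correct and takes a somewhat different route from the paper's. The paper's argument is a direct one-way containment plus counting: from $xB=A^k$ one sees that every invariant subspace of $A$ is an invariant subspace of $B$, and since the Jordan type fixes the (finite) number of invariant subspaces, the two collections coincide; the paper then asserts the same for eigenspaces and generalized eigenspaces. You instead first prove the symmetric statement $\F_q[A]=\F_q[B]$ by comparing minimal-polynomial degrees, which handles invariant subspaces cleanly in both directions, and then explicitly isolate the key point for eigenspaces---that $\alpha\mapsto\mu\alpha^k$ is injective on the spectrum---before finishing with a dimension count. Your version is longer but more transparent: the paper's phrase ``all eigenspaces of $A$ are eigenspaces of $B$'' tacitly presupposes this injectivity (without it an eigenspace of $A$ is merely \emph{contained in} one of $B$), so your argument makes explicit a step the paper glosses over. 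The paper's route, in turn, avoids the minimal-polynomial degree check and treats all three kinds of subspace with a single counting sentence.
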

\begin{proof}
If the image of a matrix $A$ and the image of a matrix $B$ are connected by an edge, then WLOG say $xB=A^k$ for some positive integer $k$ and some $x\in\F_q^*$. Then all invariant subspaces of $A$ are invariant subspaces of $A^k=xB$, and thus also invariant subspaces of $B$. Similarly, all eigenspaces of $A$ are eigenspaces of $B$, and all generalized eigenspaces of $A$ are generalized eigenspaces of $B$.

Suppose $A,B$ have the same Jordan type. Then they have the same finite number of invariant subspaces, eigenspaces and generalized eigenspaces. Hence they have identical invariant subspaces, eigenspaces and generalized eigenspaces.
\end{proof}

Next, let us discuss the cases when a matrix and a pivot matrix are connected by an edge in the reduced power graph of $\PGL_3(\F_q)$.

\begin{lem}
\label{Lem:n3LLLPivotEdge}
In the reduced power graph of $\PGL_3(\F_q)$, if the image of an LLL matrix $A$ and the image of a pivot matrix $B$ are connected by an edge, then each eigenspace of $B$ is spanned by some eigenspaces of $A$.
\end{lem}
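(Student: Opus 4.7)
The plan is to unpack the edge condition and rule out one of the two possible directions. By definition of the reduced power graph of $\PGL_3(\F_q)$, the edge between the images of $A$ and $B$ means that either $A^k = xB$ or $B^k = yA$ for some positive integer $k$ and some scalar $x$ or $y$ in $\F_q^*$. I will treat these two cases separately.

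First, I rule out the case $B^k = yA$. The pivot matrix $B$ has only two distinct eigenvalues $a,b$, so $B^k$ has eigenvalues $a^k$ (with multiplicity $2$) and $b^k$ (with multiplicity $1$), giving at most two distinct eigenvalues. But $yA$ must be similar to a scalar multiple of an LLL matrix and therefore has three distinct eigenvalues, a contradiction. So this direction cannot occur.

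This leaves the case $A^k = xB$. Here I argue that the three $1$-dimensional eigenspaces $L_1,L_2,L_3$ of $A$ (with distinct eigenvalues $\lambda_1,\lambda_2,\lambda_3$) are automatically eigenspaces of $A^k$ (for eigenvalues $\lambda_i^k$), and hence of $xB$ and of $B$. Now $B$ has a $2$-dimensional eigenspace for $a$ and a $1$-dimensional eigenspace for $b$. Since $L_1\oplus L_2\oplus L_3=\F_q^3$, each $L_i$ must sit inside one of these two eigenspaces. Exactly two of the $L_i$ correspond to $x^{-1}a$-type eigenvalues after the $k$-th-power scaling, and the third lies in the $1$-dimensional eigenspace for $b$. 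Thus the $2$-dimensional eigenspace of $B$ is the span of two of $A$'s eigenlines, and the $1$-dimensional eigenspace of $B$ is the remaining eigenline, which is the desired conclusion.

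The argument is short and the main subtlety is simply noticing that the direction $B^k=yA$ is incompatible with $A$ being LLL; beyond that, the key observation is the standard fact that eigenspaces of $A$ remain eigenspaces of any polynomial in $A$ (and of any nonzero scalar multiple), which makes the bookkeeping of which eigenspaces of $A$ glue into which eigenspace of $B$ entirely routine.
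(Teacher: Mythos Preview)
Your proof is correct and follows essentially the same approach as the paper: diagonalize $A$, push the power through, and read off that the eigenspaces of $B$ are unions of eigenlines of $A$. In fact you are slightly more careful than the paper, which simply asserts $xB=A^k$ without explicitly ruling out the direction $B^k=yA$; your eigenvalue-count argument for that step is a welcome addition.
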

\begin{proof}
If the image of an LLL matrix $A$ and the image of a pivot matrix $B$ are connected by an edge, then $xB=A^k$ for some positive integer $k$ and some $x\in\F_q^*$. 

Since $A$ is an LLL matrix, $A=X\begin{bmatrix}a_1&&\\&a_2&\\&&a_3\end{bmatrix}X^{-1}$ for some invertible $X$ and some $a_1,a_2,a_3\in\F_q^*$. Then $B=X\begin{bmatrix}x^{-1}a_1^k&&\\&x^{-1}a_2^k&\\&&x^{-1}a_3^k\end{bmatrix}X^{-1}$. 

Since $A$ is an LLL matrix, the eigenspaces of $A$ are the three lines spanned by the three columns of $X$ respectively. Since $B$ is a pivot matrix, its eigenspaces are a line spanned by a column of $X$, and a plane spanned by the other two column of $X$. So we are done.
\end{proof}

\begin{lem}
\label{Lem:n3LPPivotEdge}
In the reduced power graph of $\PGL_3(\F_q)$, if the image of an LP matrix $A$ and the image of a pivot matrix $B$ are connected by an edge, then the invariant subspaces of $A$ are exactly the eigenspaces of $B$.
\end{lem}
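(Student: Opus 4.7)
The plan is to mimic the approach of Lemma~\ref{Lem:n3LLLPivotEdge}. An edge between the images of $A$ and $B$ in the reduced power graph means that one image is a power of the other, so either $xA = B^k$ or $xB = A^k$ for some $x \in \F_q^*$ and positive integer $k$. I would first rule out $xA = B^k$: the pivot matrix $B$ has all eigenvalues in $\F_q$, hence so does $B^k$, but $xA$ is still an LP matrix and so has two eigenvalues in $\F_{q^2} \setminus \F_q$. So we must be in the case $xB = A^k$.

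Next I would put $A$ in canonical form. Write $A = X \begin{bmatrix} a & \\ & C \end{bmatrix} X^{-1}$, where $C$ is the companion matrix of an irreducible quadratic over $\F_q$ with roots $\alpha, \alpha^q \in \F_{q^2} \setminus \F_q$. Then $A^k = X \begin{bmatrix} a^k & \\ & C^k \end{bmatrix} X^{-1}$ has eigenvalues $a^k, \alpha^k, \alpha^{qk}$. Since $xB = A^k$ has all eigenvalues in $\F_q$, we need $\alpha^k \in \F_q$, and since $\alpha^{qk}$ is its Galois conjugate this forces $\alpha^{qk} = \alpha^k$. Hence $C^k$ is diagonalizable with a single eigenvalue $\alpha^k$ of multiplicity two, and so $C^k = \alpha^k I_2$. (This is essentially Corollary~\ref{Cor:CompanionPower}.)

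With $C^k$ scalar, we get $xB = X \begin{bmatrix} a^k & \\ & \alpha^k I_2 \end{bmatrix} X^{-1}$, so $B$ is conjugate via $X$ to $\mathrm{diag}(x^{-1}a^k, x^{-1}\alpha^k, x^{-1}\alpha^k)$. Since $B$ is a pivot matrix we have $a^k \neq \alpha^k$, so its two eigenspaces are exactly the line spanned by the first column of $X$ and the plane spanned by the last two columns of $X$ --- precisely the eigenline and the invariant plane of the LP matrix $A$. These are the only non-trivial $A$-invariant subspaces, because $C$ being irreducible means the plane admits no proper invariant subspace over $\F_q$, and the only $A$-eigenvectors over $\F_q$ lie on the eigenline. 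This gives the desired coincidence.

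The only step requiring any real thought is the middle one --- arguing that $\alpha^k \in \F_q$ forces $C^k$ to be scalar. Everything else is reading off canonical form data, paralleling the LLL case in Lemma~\ref{Lem:n3LLLPivotEdge}.
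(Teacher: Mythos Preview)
Your proof is correct and follows essentially the same approach as the paper: write $A$ in block form, observe that $C^k$ must be a scalar multiple of the identity (the paper invokes Lemma~\ref{Lem:CompanionPower} here, while you argue via the Galois conjugate eigenvalues, which amounts to the same thing), and then read off the eigenspaces of $B$ from the columns of $X$. You are slightly more thorough than the paper in two places---you explicitly rule out the direction $xA=B^k$, and you justify that the line and plane are the \emph{only} nontrivial invariant subspaces of $A$---but these are refinements of the same argument rather than a different route.
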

\begin{proof}
If the image of an LP matrix $A$ and the image of a pivot matrix $B$ are connected by an edge, then a power of $A$ must be $xB$ for some $x\in\F_q^*$. 

Since $A$ is an LP matrix, $A=X\begin{bmatrix}a&\\&C\end{bmatrix}X^{-1}$ for some invertible $X$, some $a\in\F_q^*$ and a companion matrix $C$ to some irreducible polynomial. Then $B=X\begin{bmatrix}x^{-1}a^k&\\&x^{-1}C^k\end{bmatrix}X^{-1}$. Since $B$ is a pivot matrix, $x^{-1}C^k$ must be diagonalizable over $\F_q$ as well. But by Lemma~\ref{Lem:CompanionPower}, $C^k$ is either not diagonalizable over $\F_q$, or a multiple of identity. So we must have $C^k=\begin{bmatrix}y&\\&y\end{bmatrix}$ for some $y\in\F_q^*$, and $B=X\begin{bmatrix}x^{-1}a^k&&\\&x^{-1}y&\\&&x^{-1}y\end{bmatrix}X^{-1}$ with $x^{-1}a^k\neq x^{-1}y$.

Now the invariant subspaces of $A$ are the line spanned by the first column of $X$, and the plane spanned by the last two columns of $X$. But these are exactly the eigenspaces of $B$.
\end{proof}

\begin{lem}
\label{Lem:n3LLPPivotEdge}
In the reduced power graph of $\PGL_3(\F_q)$, if the image of an LLP matrix $A$ and the image of a pivot matrix $B$ are connected by an edge, then the generalized subspaces of $A$ are exactly the eigenspaces of $B$.
\end{lem}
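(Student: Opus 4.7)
The plan is to mirror the argument of Lemma~\ref{Lem:n3LPPivotEdge}. An edge in the reduced power graph between the images of $A$ and $B$ means that either $xB = A^k$ or $yA = B^m$ for some scalars $x,y \in \F_q^*$ and positive integers $k,m$. I would first rule out the second direction: any power of the pivot (hence diagonalizable) matrix $B$ is diagonalizable, so $A = y^{-1}B^m$ would be diagonalizable over $\F_q$. But an LLP matrix contains a nontrivial $2\times 2$ Jordan block in its canonical form and so is not diagonalizable, a contradiction.

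So I may assume $xB = A^k$. Fix a conjugating matrix $X$ so that $A = X \begin{bmatrix}a&&\\&b&1\\&&b\end{bmatrix} X^{-1}$ with $a,b\in\F_q^*$ and $a \neq b$. A direct computation gives
\[
A^k = X \begin{bmatrix}a^k&&\\&b^k&kb^{k-1}\\&&b^k\end{bmatrix} X^{-1}.
\]
Since $x^{-1}A^k = B$ must be a pivot matrix, its bottom-right $2\times 2$ block is forced to be a scalar matrix; because $b \neq 0$ in $\F_q$, this forces $k \equiv 0 \pmod{p}$, where $p$ is the characteristic of $\F_q$. In addition one needs $a^k \neq b^k$, so that $B$ has two distinct eigenvalues. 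Then
\[
B = X \begin{bmatrix}x^{-1}a^k&&\\&x^{-1}b^k&\\&&x^{-1}b^k\end{bmatrix} X^{-1},
\]
and comparing this to the canonical form of $A$ shows that the generalized eigenspaces of $A$ — the line spanned by the first column of $X$ (for eigenvalue $a$) and the plane spanned by the last two columns of $X$ (for eigenvalue $b$) — coincide exactly with the eigenspaces of $B$.

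The only mildly subtle point is the characteristic condition $p \mid k$; once one observes that the hypothesis forces $kb^{k-1}=0$ in $\F_q$ with $b\neq 0$, this is immediate, and the remainder of the argument is bookkeeping parallel to the LP case. No auxiliary lemma in the spirit of Lemma~\ref{Lem:CompanionPower} is needed here, because the $2\times 2$ Jordan block can be exponentiated in closed form.
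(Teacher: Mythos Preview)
Your proof is correct and follows essentially the same approach as the paper's: write $A$ in its Jordan form, take the $k$-th power explicitly, and observe that diagonalizability of $B$ forces the superdiagonal entry $kb^{k-1}$ to vanish, after which the eigenspaces of $B$ are read off directly from the columns of $X$. The only difference is that you explicitly rule out the direction $yA=B^m$ (via the observation that powers of a diagonalizable matrix remain diagonalizable), whereas the paper simply asserts that a power of $A$ must equal $xB$; your version is slightly more careful on this point but otherwise identical.
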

\begin{proof}
If the image of an LLP matrix $A$ and the image of a pivot matrix $B$ are connected by an edge, then a power of $A$ must be $xB$ for some $x\in\F_q^*$. 

Since $A$ is an LLP matrix, $A=X\begin{bmatrix}a_1&&\\&a_2&1\\&&a_2\end{bmatrix}X^{-1}$ for some invertible $X$ and some $a_1,a_2\in\F_q^*$. Then $B=X\begin{bmatrix}x^{-1}a_1^k&&\\&x^{-1}a_2^k&x^{-1}ka_2^{k-1}\\&&x^{-1}a_2^k\end{bmatrix}X^{-1}$. Since $B$ is a pivot matrix, $\begin{bmatrix}x^{-1}a_2^k&x^{-1}ka_2^{k-1}\\&x^{-1}a_2^k\end{bmatrix}$ must be diagonalizable over $\F_q$ as well, so $x^{-1}ka_2^{k-1}=0$ and $B=X\begin{bmatrix}x^{-1}a_1^k&&\\&x^{-1}a_2^k&\\&&x^{-1}a_2^k\end{bmatrix}X^{-1}$ with $x^{-1}a_1^k\neq x^{-1}a_2^k$.

Now the generalized subspaces of $A$ are the line spanned by the first column of $X$, and the plane spanned by the last two columns of $X$. But these are exactly the eigenspaces of $B$.
\end{proof}

\begin{lem}
\label{Lem:n3PivotPivotDistance3}
In the reduced power graph of $\PGL_3(\F_q)$, if the image of a pivot matrix $A$ and the image of a pivot matrix $B$ have distance at most $3$, then either they have the same eigenspaces, or the $2$-dimensional eigenspace of $A$ contains the $1$-dimensional eigenspace of $B$ and vice versa.
\end{lem}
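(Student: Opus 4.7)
The plan is as follows. The distance-$0$ case is trivial and the distance-$1$ case follows immediately from Lemma~\ref{Lem:n3SameTypeEdge}: two adjacent matrices of the same Jordan type share all invariant subspaces, so $L_A = L_B$ and $P_A = P_B$. Here I write $L_A, P_A$ for the $1$- and $2$-dimensional eigenspaces of $A$, and similarly $L_B, P_B$. For the remaining cases I would consider a path $A - M_1 - M_2 - B$ of length at most $3$ and establish the dichotomy by tracking invariant subspaces along the path.

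First I would pin down the possible Jordan types of the intermediate vertices. Since pivot matrices are diagonalizable and powers of diagonalizable matrices are diagonalizable, pivot matrices are never adjacent to Jordan pivot or NPJ matrices; consulting the connection diagrams, any neighbor of a pivot matrix in the pivot component must be of type pivot, LLL, LP, or LLP. Lemmas~\ref{Lem:n3SameTypeEdge}, \ref{Lem:n3LLLPivotEdge}, \ref{Lem:n3LPPivotEdge}, and \ref{Lem:n3LLPPivotEdge} then rigidly tie the invariant-subspace structure of $M_1$ to $\{L_A, P_A\}$: for instance, $L_A$ is one eigenline of an LLL $M_1$ with the other two spanning $P_A$; or $\{L_A, P_A\}$ are precisely the proper nontrivial invariant subspaces of an LP $M_1$; or the generalized eigenspaces of an LLP $M_1$ are exactly $L_A$ and $P_A$.

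Next I would propagate this rigidity from $M_1$ to $M_2$. Adjacency $M_1 - M_2$ forces $M_1$ and $M_2$ to commute (one is a scalar times a power of the other, hence a polynomial in the other). A direct inspection of the centralizer of each Jordan type of $M_1$ --- diagonal matrices in the common eigenbasis for LLL, or block-diagonal of the form $\gamma \oplus N$ with $N$ arbitrary, $N \in \F_q[C]$, or $N \in \F_q[J]$ respectively for pivot, LP, and LLP --- shows that any matrix commuting with $M_1$ preserves both $L_A$ and $P_A$. In the LP case one passes to $\bar{\F}_q$ to note that $M_2$ preserves each of the two Galois-conjugate eigenlines of $M_1$ whose sum is $P_A$, so $P_A$ is preserved over $\F_q$. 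In particular $M_2$ preserves $L_A$ and $P_A$, and by the same type-exclusion as before $M_2$ itself must be of type pivot, LLL, LP, or LLP.

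Finally, applying Lemmas~\ref{Lem:n3SameTypeEdge}, \ref{Lem:n3LLLPivotEdge}, \ref{Lem:n3LPPivotEdge}, and \ref{Lem:n3LLPPivotEdge} to the edge $M_2 - B$ pins down how $L_B, P_B$ fit inside the invariant-subspace structure of $M_2$, and I would finish by cross-referencing with the fact that $L_A, P_A$ also belong to that structure. In the LP and LLP cases for $M_2$ the invariant subspaces are so scarce that $L_A = L_B$ and $P_A = P_B$ are immediately forced. When $M_2$ is LLL, both $(L_A, P_A)$ and $(L_B, P_B)$ correspond to partitioning the three eigenlines of $M_2$ into a singleton and a pair, yielding either equality or the ``swapped'' configuration $L_A \subset P_B$ and $L_B \subset P_A$. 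When $M_2$ is pivot with eigenspaces $L_B, P_B$, the requirement that $L_A, P_A$ also be invariant under $M_2$ gives four sub-cases, three of which violate $L_A \not\subset P_A$ or $L_B \not\subset P_B$, leaving the two alternatives of the lemma. The main obstacle is simply keeping the case analysis organized; the only subtle step is handling the LP case over $\bar{\F}_q$ in the propagation step, while the remaining sub-cases are short lattice-theoretic checks on invariant subspaces.
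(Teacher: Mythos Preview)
Your approach is correct but takes a different route from the paper's. The paper observes directly from the connection diagrams that in a path $A - A' - B' - B$ the two intermediate vertices $A', B'$ must have the \emph{same} Jordan type among LLL, LP, or LLP (there are no cross-type edges among these three); Lemma~\ref{Lem:n3SameTypeEdge} then forces $A'$ and $B'$ to share all eigenspaces, invariant subspaces, and generalized eigenspaces, so one simply applies Lemmas~\ref{Lem:n3LLLPivotEdge}--\ref{Lem:n3LLPPivotEdge} once on each side and compares, giving three short cases indexed by the common type of $A',B'$. You instead bypass the same-type observation and propagate invariants via the centralizer of $M_1$: this is a valid alternative and even handles more uniformly the degenerate situation where $M_1$ or $M_2$ is itself pivot (which the paper's proof leaves implicit), at the cost of a four-way case split on the type of $M_2$ plus a lattice check inside each case. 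Two minor remarks on your write-up: the detour through $\bar{\F}_q$ in the LP propagation is unnecessary, since the centralizer of $\mathrm{diag}(a,C)$ is already block diagonal over $\F_q$ (because $a$ is not an eigenvalue of $C$); and in the LLP case for $M_2$ there are in fact four proper nontrivial invariant subspaces rather than two, so forcing $L_A = L_B$ still requires noting that both invariant planes contain the repeated-eigenvalue eigenline, whence $L_A \not\subset P_A$ excludes that eigenline. Your sub-case count when $M_2$ is pivot also seems off by one (I find two contradictory sub-cases and two valid ones), but the conclusion is unaffected.
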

\begin{proof}
Suppose in the reduced power graph of $\PGL_3(\F_q)$, the image of $A$ has an edge to the image of $A'$, which has an edge to the image of $B'$, which has an edge to the image of $B'$. By going through the diagrams from Figure~\ref{Fig:n3q-1NotPowerJordanConn} to Figure~\ref{Fig:n3q=3JordanConn}, $A',B'$ must have the same Jordan type and must be LLL matrices or LP matrices or LLP matrices.

If $A',B'$ are both LP or both LLP matrices, then they have the same invariant subspaces and generalized eigenspaces by Lemma~\ref{Lem:n3SameTypeEdge}. Then by Lemma~\ref{Lem:n3LPPivotEdge} and Lemma~\ref{Lem:n3LLPPivotEdge}, $A,B$ must have the same eigenspaces.

If $A',B'$ are both LLL matrices, then they have the same eigenspaces by Lemma~\ref{Lem:n3SameTypeEdge}. Say the three $1$-dimensional eigenspaces are $W_1,W_2,W_3$. By Lemma~\ref{Lem:n3LLLPivotEdge}, eigenspaces of $A$ and of $B$ are spanned by these. By exhausting all possibilities, either $A,B$ have the same eigenspaces, or the $2$-dimensional eigenspace of $A$ contains the $1$-dimensional eigenspace of $B$ and vice versa.
\end{proof}

Here are some special restrictions when $q=9$ or $q$ is a Fermat prime.

\begin{lem}
\label{Lem:n3q9FermatLLLPivotEdge}
Suppose $q=9$ or $q$ is a Fermat prime. For an LLL matrix $A\in\GL_3(\F_q)$, let $k$ be the projective order of $A$. Then $A^{\frac{k}{2}}$ is a pivot matrix. Furthermore, in the reduced power graph of $\PGL_3(\F_q)$, suppose the image $A$ is connected to the image of a pivot matrix $B$ by an edge, then $A^{\frac{k}{2}}$ and $B$ must have identical eigenspaces.
\end{lem}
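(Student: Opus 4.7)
The plan is to exploit the fact that $q-1$ is a power of $2$ (which is the defining feature of $q=9$ and of Fermat primes), so that every element of $\F_q^*$ has multiplicative order a power of $2$. Write $A=X\,\mathrm{diag}(a_1,a_2,a_3)\,X^{-1}$ for distinct $a_1,a_2,a_3\in\F_q^*$ and let $k_{ij}:=\mathrm{ord}(a_i/a_j)$. Then the projective order $k$ of $A$ equals $\mathrm{lcm}(k_{12},k_{13},k_{23})$, which, as a lcm of powers of $2$, is simply the maximum; since the $a_i$ are distinct, $k\geq 2$ is itself a power of $2$.

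For the first claim, I will show $A^{k/2}$ is a pivot matrix by a case analysis of the three orders $k_{ij}$. By the ultrametric property of $2$-adic valuations applied to the identity $(e_1-e_2)+(e_2-e_3)=(e_1-e_3)$ (writing $a_i=\xi^{e_i}$ for a generator $\xi$ of $\F_q^*$), the multiset $\{k_{12},k_{23},k_{13}\}$ either consists of three equal values or of two copies of the maximum together with one strictly smaller value. If all three equal $k$, then $(a_i/a_j)^{k/2}=-1$ for every pair, forcing $a_1^{k/2}=-a_2^{k/2}=a_3^{k/2}=-a_1^{k/2}$, which is absurd in characteristic not $2$ (and the only case here with $q$ even would be excluded since Fermat primes and $9$ are odd). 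So, WLOG, $k_{12}=k_{23}=k$ and $k_{13}\mid k/2$, giving $a_1^{k/2}=a_3^{k/2}\neq a_2^{k/2}$, so that $A^{k/2}$ is pivot with $2$-dimensional eigenspace spanned by columns $1$ and $3$ of $X$ and $1$-dimensional eigenspace spanned by column $2$ of $X$.

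For the second claim, suppose the image of $A$ is adjacent to the image of a pivot matrix $B$. Then either $A$ is a scalar multiple of a power of $B$, or $B$ is a scalar multiple of a power of $A$. The former would give $A$ at most two distinct eigenvalues, contradicting $A$ being an LLL matrix; so $B=cA^m$ for some scalar $c$ and integer $m$. The eigenvalues of $B$ are $ca_i^m$, and for $B$ to be pivot exactly one pair must coincide, i.e.\ exactly one $k_{ij}$ must divide $m$. From the structure established in Step~$1$ (with $k_{12}=k_{23}=k$ and $k_{13}\mid k/2$), the only divisibility that can occur in isolation is $k_{13}\mid m$, so the coinciding pair is again $(1,3)$ and $B$'s $2$-dimensional eigenspace is spanned by columns $1,3$ of $X$, matching the eigenspaces of $A^{k/2}$.

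The main obstacle is the combinatorial step ruling out the ``all three $k_{ij}$ equal'' case and verifying that, among the three pairwise orders, the identity of the ``small'' pair is an invariant of $A$ independent of the exponent $m$ used to reach $B$. Once that is pinned down via the $2$-adic ultrametric argument, both the existence of the pivot power $A^{k/2}$ and the coincidence of eigenspaces with any pivot neighbor $B$ fall out simultaneously.
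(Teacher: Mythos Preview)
Your argument is correct, and in fact extracts slightly more structure than the paper's: you pin down in advance \emph{which} two eigenlines of $A$ coalesce under any pivot-producing power. The paper handles the first claim more quickly, observing only that $a_1^{k/2},a_2^{k/2},a_3^{k/2}$ are three square roots of the common value $a_1^{k}=a_2^{k}=a_3^{k}$ and hence cannot all be distinct, so $A^{k/2}$ is pivot. For the second claim the paper takes a less direct route: given $xB=A^{t}$ with $k\nmid t$, it chooses a power of $2$, call it $t'$, so that $\tfrac{k}{2}\mid tt'$ but $k\nmid tt'$, sets $B'=(xB)^{t'}$, observes that $B'$ is simultaneously a nontrivial power of $B$ and of the pivot matrix $A^{k/2}$, and then invokes Lemma~\ref{Lem:n3SameTypeEdge} (same Jordan type on an edge $\Rightarrow$ same eigenspaces) to conclude that $B$, $B'$, and $A^{k/2}$ all share eigenspaces. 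Your $2$-adic analysis of the pairwise orders $k_{ij}$ replaces this detour: once you know the unique ``small'' pair, any pivot power of $A$ must merge exactly that pair, so $B$ and $A^{k/2}$ automatically have the same eigenspaces, with no appeal to Lemma~\ref{Lem:n3SameTypeEdge}. The paper's version is shorter for the first claim; yours is more self-contained and more informative for the second.
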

\begin{proof}
Since $A$ is an LLL matrix, $A=X\begin{bmatrix}a_1&&\\&a_2&\\&&a_3\end{bmatrix}X^{-1}$ for some invertible $X$ and some $a_1,a_2,a_3\in\F_q^*$. Since $k$ is the projective order of $A$, and $q-1$ is a power of $2$, $k$ must be a power of $2$. So $a_1^k=a_2^k=a_3^k$, but $a_1^{\frac{k}{2}},a_2^{\frac{k}{2}},a_3^{\frac{k}{2}}$ cannot all be identical, and they are all square roots of the same element in $\F_q^*$. Since each element in $\F_q^*$ has at most two square roots, $A^{\frac{k}{2}}$ cannot have three distinct eigenvalues. So it is indeed a pivot matrix.

Now, if the image of an LLL matrix $A$ and the image of a pivot matrix $B$ are connected by an edge, then $xB=A^{t}$ for some positive integer $t$ and some $x\in\F_q^*$. Since $B$ is not a scalar multiple of identity, $k$ cannot divide $t$. So let $t'$ be the power of $2$ such that $\frac{k}{2}$ divides $tt'$ but $k$ does not divide $tt'$, and set $B'=(xB)^{t'}$. Then $B'$ is a power of $A$ that is not a scalar multiple of identity, but it is a power of $A^{\frac{k}{2}}$. Hence it is a pivot matrix. So the image of $B$ and the image of $B'$ are connected by an edge, and the image of $B'$ and the image of $A^{\frac{k}{2}}$ are connected by an edge. By Lemma~\ref{Lem:n3SameTypeEdge}, they all of the same eigenspaces.
\end{proof}

\begin{lem}
\label{Lem:n3q9FermatLLLParallelEdge}
Suppose $q=9$ or $q$ is a Fermat prime. Suppose in the reduced power graph of $\PGL_3(\F_q)$, the image of an LLL matrix $A$ and the image of an LLL matrix $B$ are connected by an edge. Let $k_A,k_B$ be the projective order of $A,B$ respectively. Then the pivot matrices $A^{\frac{k_A}{2}}$ and $B^{\frac{k_B}{2}}$ must have identical eigenspaces.
\end{lem}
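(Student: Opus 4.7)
The plan is to exploit two structural facts: the edge forces $A$ and $B$ to share an eigenbasis, and $\F_q^*$ is cyclic of $2$-power order (since $q-1$ is a power of $2$ in both cases considered), so the "collapsing pattern" of eigenvalues under a $k/2$-th power is controlled entirely by $2$-adic valuations.

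First I would unpack the edge. Since the images of $A$ and $B$ are adjacent in the reduced power graph of $\PGL_3(\F_q)$, WLOG (swapping the roles of $A$ and $B$ otherwise) we have $xB = A^t$ for some $x \in \F_q^*$ and some positive integer $t$. Writing $A = X\,\mathrm{diag}(a_1, a_2, a_3)\,X^{-1}$ with $a_1, a_2, a_3$ distinct, this forces $B = X\,\mathrm{diag}(x^{-1}a_1^t, x^{-1}a_2^t, x^{-1}a_3^t)\,X^{-1}$. So $A$ and $B$ are simultaneously diagonalized by $X$ and share the same three eigenlines $\langle X e_1\rangle, \langle X e_2\rangle, \langle X e_3\rangle$.

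Next I would do the $2$-adic analysis for $A$. Set $o_{ij} := v_2(\mathrm{ord}(a_i/a_j)) \ge 1$ for distinct $i,j$, and let $M = \max_{i \neq j} o_{ij}$, so $k_A = 2^M$. The identity $a_i^{k_A/2} = a_j^{k_A/2}$ is equivalent to $o_{ij} \le M - 1$. The $2$-adic ultrametric inequality applied to discrete logarithms $\alpha_i = \log a_i$ in the cyclic group $\F_q^*$ shows that at least two of the three values $o_{12}, o_{13}, o_{23}$ must equal $M$; combined with Lemma~\ref{Lem:n3q9FermatLLLPivotEdge}, which guarantees $A^{k_A/2}$ is a pivot matrix (so exactly one pair of eigenvalues collapses), this pins down a unique pair $(i,j)$ with $o_{ij} < M$. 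The $2$-dimensional eigenspace of $A^{k_A/2}$ is then $\langle X e_i, X e_j\rangle$ and its $1$-dimensional eigenspace is the remaining eigenline.

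Finally I would repeat this for $B$. Since $b_i/b_j = (a_i/a_j)^t$, setting $s = v_2(t)$ gives $o'_{ij} := v_2(\mathrm{ord}(b_i/b_j)) = o_{ij} - \min(o_{ij}, s)$; the hypothesis that $B$ is LLL forces $b_i \ne b_j$ for all pairs, hence $o_{ij} > s$ for all pairs, so uniformly $o'_{ij} = o_{ij} - s$. Therefore $k_B = 2^{M - s}$ and the unique pair $(i,j)$ with $o'_{ij} < M - s$ is the same unique pair with $o_{ij} < M$. Consequently $B^{k_B/2}$ has the same $2$-dimensional and $1$-dimensional eigenspaces as $A^{k_A/2}$, as required. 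The main technical hurdle is the ultrametric bookkeeping in the third step: checking that under the hypothesis that $A^{k_A/2}$ is a pivot matrix, exactly one of the three pairwise valuations $o_{ij}$ is strictly less than $M$ (neither zero nor two such pairs), and that this "distinguished pair" is invariant under the transformation $a_i \mapsto x^{-1} a_i^t$.
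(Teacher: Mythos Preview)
Your argument is correct, but it takes a different and considerably longer route than the paper's. The paper dispatches this lemma in one line by reducing to Lemma~\ref{Lem:n3q9FermatLLLPivotEdge}: from $xB=A^t$ one gets that $B^{k_B/2}$ is a scalar multiple of a power of $A$, hence the image of the pivot matrix $B^{k_B/2}$ is adjacent to the image of the LLL matrix $A$ in the reduced power graph, and then the second assertion of Lemma~\ref{Lem:n3q9FermatLLLPivotEdge} immediately gives that $A^{k_A/2}$ and $B^{k_B/2}$ share eigenspaces. You instead reprove this conclusion from scratch via a $2$-adic analysis of the pairwise ratios $a_i/a_j$, identifying the unique ``collapsing pair'' and showing it is invariant under $a_i\mapsto x^{-1}a_i^t$.

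Both approaches are valid. The paper's is shorter because it recycles work already done; yours is more self-contained and makes the mechanism transparent (the shift $o'_{ij}=o_{ij}-s$ preserves which pair has the strictly smaller valuation). Note that your appeal to Lemma~\ref{Lem:n3q9FermatLLLPivotEdge} just to know that $A^{k_A/2}$ is pivot is actually unnecessary: your own ultrametric observation that at least two of $o_{12},o_{13},o_{23}$ equal $M$, combined with the fact that if all three equalled $M$ then all three ratios $(a_i/a_j)^{2^{M-1}}$ would be $-1$ (forcing two of the $a_i^{2^{M-1}}$ to coincide, a contradiction), already gives that exactly one $o_{ij}$ is below $M$.
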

\begin{proof}
If the image of a matrix $A$ and the image of a matrix $B$ are connected by an edge, then WLOG say $xB=A^k$ for some positive integer $k$ and some $x\in\F_q^*$. Then $B^{\frac{k_B}{2}}$ is a scalar multiple of a power of $A$, and it is a pivot matrices. So by Lemma~\ref{Lem:n3q9FermatLLLPivotEdge}, $A^{\frac{k_A}{2}}$ and $B^{\frac{k_B}{2}}$ must have identical eigenspaces.
\end{proof}

\begin{lem}
\label{Lem:n3q9FermatPivotPivotPath}
Suppose $q\neq 2$ and $q-1$ is a prime power. In the reduced power graph of $\PGL_3(\F_q)$, suppose the image of a pivot matrix $A$ is connected to the image of a pivot matrix $B$ by an path that does not include any Jordan pivot matrix, then $A$ and $B$ must have identical eigenspaces.
\end{lem}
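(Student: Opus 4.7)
The plan is to define, for every vertex on the path, a canonical ``pivot shadow'' consisting of an ordered pair (a line, a plane) of subspaces of $\F_q^3$, and to verify that this pair is preserved across every edge of the path. Concluding that $A$ and $B$ have the same pair will finish the proof, since for pivot matrices the pair is literally the eigenspace decomposition. First I would note that when $q \neq 2$ and $q-1$ is a prime power, by Figures~\ref{Fig:n3q-1OddPowerJordanConn}, \ref{Fig:n3q-1EvenPowerJordanConn}, and \ref{Fig:n3q=3JordanConn}, the pivot component contains only LLL, pivot, LP, LLP, Jordan pivot, and (when $q$ is even with $q-1$ prime) NPJ matrices, and NPJ matrices are adjacent only to Jordan pivot matrices. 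A path avoiding Jordan pivot matrices therefore also avoids NPJ matrices, so it is supported on the four Jordan types LLL, pivot, LP, LLP.

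Next I would attach the pair $(\ell_M, \pi_M)$ as follows: for a pivot matrix $M$, take its $1$-dimensional and $2$-dimensional eigenspaces; for an LP matrix, take its eigenline and invariant plane; for an LLP matrix, take its $1$-dimensional and $2$-dimensional generalized eigenspaces; for an LLL matrix $M$ with projective order $k_M$, invoke Lemma~\ref{Lem:n3q9FermatLLLPivotEdge} to see that $M^{k_M/2}$ is a pivot matrix and take its eigenspaces. Preservation across an edge $M_i M_{i+1}$ is then checked type by type against the admissible edge set, which after inspecting the figures reduces to the seven cases LLL-LLL, LLL-pivot, pivot-pivot, LP-pivot, LLP-pivot, LP-LP, and LLP-LLP (no LP-LLP, LP-LLL, or LLP-LLL edges are possible, since powers of LP, LLP, or LLL matrices never produce the other Jordan types). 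The same-type edges are handled by Lemma~\ref{Lem:n3SameTypeEdge} applied to invariant subspaces (for LP-LP), generalized eigenspaces (for LLP-LLP), and eigenspaces (for pivot-pivot), while LLL-LLL is handled by Lemma~\ref{Lem:n3q9FermatLLLParallelEdge}. The mixed edges with a pivot matrix on one side are handled by Lemmas~\ref{Lem:n3LPPivotEdge}, \ref{Lem:n3LLPPivotEdge}, and \ref{Lem:n3q9FermatLLLPivotEdge}, respectively.

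The main obstacle I expect is dealing with LLL matrices in a uniform way, since an LLL matrix has three distinguished eigenlines and no natural line-plane decomposition of its own. The trick is exactly that $q-1$ is a prime power: in the odd-$q$ prime-power cases where LLL matrices actually lie in the pivot component (namely $q = 9$ or $q$ a Fermat prime with $q \neq 3$), $q-1$ is a power of $2$, so $M^{k_M/2}$ forces two of the three eigenvalues of $M$ to coincide while keeping the third distinct, yielding a canonical pivot shadow; the existing Lemmas~\ref{Lem:n3q9FermatLLLPivotEdge} and \ref{Lem:n3q9FermatLLLParallelEdge} then transfer this shadow correctly across edges touching LLL vertices. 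In the remaining prime-power cases ($q = 3$, and $q$ even with $q-1$ prime) no LLL vertex lies in the pivot component at all, so the verification becomes strictly easier and the eigenspace pair is directly read off the pivot, LP, and LLP vertices using Lemmas~\ref{Lem:n3SameTypeEdge}, \ref{Lem:n3LPPivotEdge}, and \ref{Lem:n3LLPPivotEdge}.
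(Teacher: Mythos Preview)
Your proposal is correct and follows essentially the same approach as the paper: the paper first reduces WLOG to the case where $A,B$ are the only pivot matrices on the path, observes (via the same absence of LP--LLP, LP--LLL, LLP--LLL edges that you noted) that the interior vertices then all share one Jordan type, and handles the three resulting cases using exactly Lemmas~\ref{Lem:n3SameTypeEdge}, \ref{Lem:n3LPPivotEdge}, \ref{Lem:n3LLPPivotEdge}, \ref{Lem:n3q9FermatLLLPivotEdge}, and \ref{Lem:n3q9FermatLLLParallelEdge}. Your edge-by-edge ``pivot shadow'' invariant is just a repackaging of the same case analysis without the preliminary reduction, invoking the identical lemmas.
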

\begin{proof}
WLOG we can assume that there is no other pivot matrices on this path other than $A$ and $B$. By Figure~\ref{Fig:n3q-1OddPowerJordanConn} and Figure~\ref{Fig:n3q-1EvenPowerJordanConn}, then all matrices in this path between $A$ and $B$ must have the same Jordan type. They must all be LP matrices, or all be LLP matrices, or all be LLL matrices.

Suppose they are all LP matrices. Then by Lemma~\ref{Lem:n3SameTypeEdge}, since all of them are connected by edges, they all have the same invariant subspaces. By Lemma~\ref{Lem:n3LPPivotEdge}, the eigenspaces of $A$ and $B$ must be identical to these invariant subspaces. So $A$ and $B$ must have identical eigenspaces.

Suppose they are all LLP matrices. Then by Lemma~\ref{Lem:n3SameTypeEdge}, since all of them are connected by edges, they all have the same generalized eigenspaces. By Lemma~\ref{Lem:n3LLPPivotEdge}, the eigenspaces of $A$ and $B$ must be identical to these generalized eigenspaces. So $A$ and $B$ must have identical eigenspaces.

Finally, suppose they are all LLL matrices. Note that in this case, $q$ must be odd, so $q=9$ or $q$ is a Fermat prime by Lemma~\ref{Lem:ConseqPP}. Suppose the path goes from the image of $A$ to the image of $A_1$, to the image of $A_2$, \dots, to the image of $A_t$, and finally to the image of $B$. Let $k_i$ be the projective order of $A_i$. By Lemma~\ref{Lem:n3q9FermatLLLParallelEdge}, all $A_i^{\frac{k_i}{2}}$ have the same eigenspaces. And by Lemma~\ref{Lem:n3q9FermatLLLPivotEdge}, $A$ and $B$ must also have the same eigenspaces as these matrices.
\end{proof}

\section{Lower Bounds when $q-1$ is not a prime power}

Let $q\neq 2$ be a power of a prime $p$. Suppose $q-1$ is not a prime power. When $q$ is odd, let $p_0$ be any odd prime factor of $q-1$, and pick any $x\in\F_q^*$ with multiplicative order $p_0$. When $q$ is even, pick any multiplicative generator $x\in\F_q^*$, and set $p_0=q-1$. Note that either way, we must have $x\neq 0,1$, and both $x$ and $x^2$ have multiplicative order $p_0$.

Set $A=\begin{bmatrix}1&&\\&x&\\&&x^2\end{bmatrix}$. Pick any $x'\neq 0,1,-2$ in $\F_q$. By our assumption, $q>3$, so this is possible. Set $X=\begin{bmatrix}x'&1&1\\1&x'&1\\1&1&x'\end{bmatrix}$, which has determinant $(x'+2)(x'-1)^2\neq 0$, so it is invertible. Finally, set $B=XAX^{-1}$. Note that $A,B$ must be LLL matrices with projective and multiplicative order $p_0$. I claim that in the reduce power graph of $\PGL_3(\F_q)$, images of $A,B$ will have distance at least $8$ when $q$ is odd, and at least $10$ when $q$ is even.

\begin{lem}
\label{Lem:n3qNotPowerBadLLLExample}
Let $q\neq 2$ be a power of a prime $p$. Suppose $q-1$ is not a prime power. If $q$ is odd, in the reduce power graph of $\PGL_3(\F_q)$, the distance between the image of $A$ or $B$ to the image of any pivot matrix is at least $2$. If $q$ is even, this distance is at least $3$. 
\end{lem}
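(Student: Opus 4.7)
The plan is to exploit that $B = XAX^{-1}$ is conjugate to $A$, so conjugation by $X$ is a graph automorphism of the reduced power graph that permutes the pivot matrices; it therefore suffices to prove the distance bounds for $A$.

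For the odd-$q$ case ($p_0$ an odd prime factor of $q-1$), I will show $A$ is not adjacent to any pivot matrix. An edge between $A$ and a pivot matrix $P$ means $A = cP^k$ or $P = cA^k$ for some $c\in\F_q^*$ and $k\in\Z$. The first possibility fails on eigenvalue counts, since $cP^k$ has at most two distinct eigenvalues while $A$ has three. The second gives $P = \mathrm{diag}(c, cx^k, cx^{2k})$; any pairwise equality among these entries forces $x^k = 1$ (using that the odd prime $p_0$ absorbs the factor of $2$), which makes $A^k = I$ and hence $P$ a scalar matrix, not a pivot matrix.

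For the even-$q$ case ($p_0 = q-1$, $x$ a generator of $\F_q^*$), I need to rule out length-$2$ paths from $A$ to pivot matrices. The first step is to show that any matrix $C$ adjacent to $A$ in the reduced power graph must itself be diagonal in the standard basis with three distinct diagonal entries. If $C = cA^k$, this is immediate, and two equal entries would force $x^k = 1$ (in characteristic $2$, $x^{2k} = 1 \Leftrightarrow x^k = 1$), making $C$ scalar. If $A = cC^k$, then $C^k$ has three distinct eigenvalues on the coordinate axes, so $C$ preserves each axis and hence is diagonal; equal entries of $C$ would force $x = 1$ or $x^2 = 1$, both impossible.

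The second step shows that such a diagonal LLL matrix $C = \mathrm{diag}(\alpha, \beta, \gamma)$ cannot be adjacent to a pivot matrix $P$. Again the subcase $C = dP^\ell$ fails by eigenvalue count, so assume $P = dC^\ell$. Setting $y = \beta/\alpha$ and $z = \gamma/\alpha$, the main observation is the identity $z = y^2$: this is obvious in the subcase $C = cA^k$ (where $y = x^k$, $z = x^{2k}$), and in the subcase $A = cC^k$ it follows from writing $y = x^a, z = x^b$ and using $y^k = x, z^k = x^2$ to get $ka \equiv 1, kb \equiv 2 \pmod{q-1}$, so $b \equiv 2a$. Once $z = y^2$ is known, in characteristic $2$ the three pairwise-equality conditions $y^\ell = 1$, $z^\ell = 1$, $(z/y)^\ell = 1$ all coincide, forcing the diagonal entries of $dC^\ell$ to be either all equal (scalar) or all distinct (LLL), never exactly two equal. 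The main obstacle is the identity $z = y^2$ in the second subcase, which relies on $x$ being a generator so that the equations $y^k = x$ and $z^k = x^2$ determine $y$ and $z$ inside $\langle x \rangle$; once established, everything reduces to elementary cyclic-group arithmetic in $\F_q^*$.
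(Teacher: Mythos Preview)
Your proof is correct and follows essentially the same route as the paper's. Both arguments hinge on the observation that for any diagonal LLL matrix $C$ in the orbit of $A$ (under power/root and scalar multiple), the pairwise ratios of its diagonal entries all become trivial simultaneously when raised to any power, so that $dC^\ell$ can never have exactly two equal eigenvalues.

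The only organizational difference is this: the paper first eliminates the subcase $C=cA^k$ by noting that then $P=dC^\ell$ would itself be a scalar multiple of a power of $A$, already excluded by the distance-$\geq 2$ argument; it then treats only the root subcase $A=cC^k$ and observes that each eigenvalue ratio of $C$ is a $k$-th root of a generator of $\F_q^*$, hence itself a generator. You instead keep both subcases and extract the cleaner relation $z=y^2$, which together with $|\F_q^*|$ odd forces the three equality conditions $y^\ell=1$, $y^{2\ell}=1$, $y^\ell=1$ to coincide without needing $y$ to be a generator. This makes your Step~2 slightly more uniform at the cost of a little extra casework; the underlying mechanism is the same.
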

\begin{proof}
It is enough to prove it for $A$, as $A,B$ are similar matrices. 

Note that $A$ has eigenvalue $1,x,x^2$. So for any $y\in\F_q^*$, the matrix $yA^k$ has eigenvalues $y,yx^k,yx^{2k}$. But since $x$ has odd multiplicitive order $p_0$, $yA^k$ has all three eigenvalues distinct when $k$ is not a multiple of $p_0$, and all three eigenvalues identical when $k$ is a multiple of $p_0$. So it is either an LLL matrix or a scalar multiple of identity. 

Also note that roots of an LLL matrix must still be an LLL matrix. Therefore, powers and roots of scalar multiples of $A$ cannot be a pivot matrix. So the distance between the image of $A$ to the image of any pivot matrix is at least $2$.

Suppose $q$ is even. Suppose for contradiction that, in the reduce power graph of $\PGL_3(\F_q)$, the image of $A$ is connected to the image of $A'$ by an edge, and the image of $A'$ is connected to the image of a pivot matrix $B$ by an edge. Then $B$ must be a scalar multiple of a power of $A'$. Since $B$ cannot be a scalar multiple of a power of $A$, therefore $A'$ must be the root of a scalar multiple of $A$, and must be an LLL matrix.

Suppose $A'$ has eigenvalues $x',y',z'$, and $(A')^k=yA$ for some $y\in\F_q^*$ and some positive integer $k$. Then $(x')^k=y,(y')^k=yx,(z')^k=yx^2$. So $x'(y')^{-1},y'(z')^{-1},z'(x')^{-1}$ are all roots of $x$ or $x^2$, both are multiplicative generators of $\F_q^*$. Therefore $x'(y')^{-1},y'(z')^{-1},z'(x')^{-1}$ are all multiplicative generators of $\F_q^*$, and they all have multiplicative order $q-1$. So powers of $A'$ will either have all three eigenvalues distinct, or all three eigenvalues identical. So if $B$ is a scalar multiple of a power of $A'$, then it must have identical eigenvalues and become a multiple of identity. But $B$ is supposed to be a pivot matrix, contradiction.
\end{proof}

\begin{prop}
Let $q\neq 2$ be a power of a prime $p$. Suppose $q-1$ is not a prime power. If $q$ is odd, in the reduce power graph of $\PGL_3(\F_q)$, the distance between the image of $A$ and the image of $B$ is at least $8$. If $q$ is even, this distance is at least $10$. 
\end{prop}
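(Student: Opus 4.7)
The plan is to analyze an arbitrary path $A = V_0, V_1, \dots, V_d = B$ in the projectively reduced power graph of $\GL_3(\F_q)$ and decompose its length into three segments: the prefix from $A$ up to the first pivot matrix on the path, the middle segment between the first and last pivot matrices, and the suffix from the last pivot to $B$. Lemma~\ref{Lem:n3qNotPowerBadLLLExample} already bounds the prefix and suffix lengths below by $2$ in the odd case and by $3$ in the even case, so the remaining work is to establish a lower bound of $4$ on the middle segment by pinning down the eigenspaces of the first and last pivot matrices on the path.

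The first step would be a rigidity claim: every vertex on the initial segment of the path, up to but not including the first pivot matrix, is an LLL matrix diagonal in the standard basis whose eigenspaces are the three coordinate lines $\langle e_1\rangle, \langle e_2\rangle, \langle e_3\rangle$ inherited from $A$. Adjacency in the projective reduced power graph forces commutativity, so every neighbor of $A = \mathrm{diag}(1, x, x^2)$ must be diagonal in the standard basis; checking both adjacency directions $C = yA^k$ and $A = yC^k$ rules out any collapse of eigenvalues into a pivot configuration, since such a collapse would force $x = 1$. The same local analysis propagates: whenever the current vertex is an LLL matrix diagonal in the standard basis, its next neighbor is again diagonal in the standard basis, and is either another such LLL matrix (with the same three eigenspaces) or a pivot matrix whose $1$-dimensional eigenspace is a coordinate line $\langle e_r\rangle$ and whose $2$-dimensional eigenspace is the span of the other two coordinate lines. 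Since $A$ and $B$ have distinct eigenspaces (none of the columns of $X$ is a coordinate line), the path cannot consist only of LLL matrices, so a first pivot matrix $P_A$ really does appear on the path, and its eigenspaces are constrained as described. A symmetric argument applied to the reversed path from $B$ produces a last pivot matrix $P_B$ on the path whose eigenspaces are spanned by columns of $X$.

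The final step would be to bound $d(P_A, P_B) \ge 4$ using Lemma~\ref{Lem:n3PivotPivotDistance3}: if the distance were at most $3$, then either $P_A$ and $P_B$ would share all eigenspaces, or the $2$-dimensional eigenspace of each would contain the $1$-dimensional eigenspace of the other. The first option would force some column of $X$ to be a scalar multiple of some $e_r$, while the second would force some column of $X$ to lie in a coordinate $2$-plane $\mathrm{span}(e_s, e_t)$. Both are impossible because every column of $X$ has all three entries in $\{x', 1\}$, which are nonzero by the choice $x' \neq 0$. Summing the three segment bounds then yields $d \ge 2 + 4 + 2 = 8$ in the odd case and $d \ge 3 + 4 + 3 = 10$ in the even case. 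The main obstacle will be the rigidity step, which demands a careful case analysis on the two adjacency directions and on the eigenvalue collapses that powers and roots of LLL matrices can produce.
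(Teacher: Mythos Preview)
Your proposal is correct and follows essentially the same approach as the paper: locate the first and last pivot matrices on the path, use Lemma~\ref{Lem:n3qNotPowerBadLLLExample} to bound the prefix and suffix, constrain the eigenspaces of those pivots via the coordinate lines (resp.\ columns of $X$), and then invoke Lemma~\ref{Lem:n3PivotPivotDistance3} to force the middle segment to have length at least $4$. The only difference is cosmetic: where the paper cites Lemma~\ref{Lem:n3SameTypeEdge} and Lemma~\ref{Lem:n3LLLPivotEdge} to pin down the eigenspaces of the first and last pivots, you re-derive that rigidity directly from commutativity and the eigenvalue structure of $A$, which is fine.
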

\begin{proof}
Suppose for contradiction that there is a path from the image of $A$ to the image of $B$ in the reduce power graph of $\PGL_3(\F_q)$, with distance at most $7$ for $q$ odd or distance at most $9$ for $q$ even.

First note that $A,B$ do not have the same eigenspaces. So by Lemma~\ref{Lem:n3SameTypeEdge}, any path between them in the reduce power graph of $\PGL_3(\F_q)$ cannot be entirely made of LLL matrices. By Figure~\ref{Fig:n3q-1NotPowerJordanConn}, our path must contain a pivot matrix.

Let $A'$ be the first pivot matrix on this path starting from $A$, and let $B'$ be the last pivot matrix on this path. By Lemma~\ref{Lem:n3qNotPowerBadLLLExample}, the distance between $A$ and $A'$ is at least $2$ for $q$ odd, and at least $3$ for $q$ even. The same is true for $B$ and $B'$. Hence either way, the distance between $A'$ and $B'$ is at most $3$.

Now by construction, $A$ must be connected to $A'$ via a series of LLL matrices. By Lemma~\ref{Lem:n3LLLPivotEdge}, eigenspaces of $A'$ are spanned by eigenspaces of $A$. Let $\bs e_1,\bs e_2,\bs e_3$ be the three columns of the identity matrix, then the eigenspaces of $A'$ must be $\mathrm{span}(\bs e_1)$ and $\mathrm{span}(\bs e_2,\bs e_3)$, or $\mathrm{span}(\bs e_2)$ and $\mathrm{span}(\bs e_3,\bs e_1)$, or $\mathrm{span}(\bs e_3)$ and $\mathrm{span}(\bs e_1,\bs e_2)$. These are the only three possibilities.

Similarly, eigenspaces of $B'$ are spanned by eigenspaces of $B$. Let $\bs x_1,\bs x_2,\bs x_3$ be the three columns of the matrix $X$, then the one dimensional eigenspace of $B'$ must be $\mathrm{span}(\bs x_1)$ or $\mathrm{span}(\bs x_2)$ or $\mathrm{span}(\bs x_3)$. Neither of them can coinside with the one dimensional eigenspace of $A'$, and neither of them can be contained in the two dimensional eigenspace of $A'$. But this is impossible by Lemma~\ref{Lem:n3PivotPivotDistance3}. So we are done.
\end{proof}

\section{Connection restrictions to Jordan pivot matrices}

\begin{lem}
\label{Lem:n3JordanPivotLinePlane}
If $A\in\GL_3(\F_q)$ is a Jordan pivot matrix with eigenvalue $a$, then the column space of $A-aI$ is contained in the kernel of $A-aI$.
\end{lem}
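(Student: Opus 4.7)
The plan is to unpack what it means for $A$ to be a Jordan pivot matrix with eigenvalue $a$ and then observe that the stated containment follows almost immediately. Recall from the paper's classification that such an $A$ has generalized Jordan canonical form $\begin{bmatrix}a&&\\&a&1\\&&a\end{bmatrix}$, which is the natural generalization (to arbitrary eigenvalue) of the Jordan pivot condition that $A-aI$ has rank one and $(A-aI)^{2}=0$.

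First I would verify, either directly from the canonical form or by citing the rank--nilpotency characterization, that $(A-aI)^{2}=0$. Once this is in hand, the conclusion is one line: for any vector $\bs v\in\F_q^{3}$, the vector $(A-aI)\bs v$ lies in the column space of $A-aI$; applying $A-aI$ to it gives $(A-aI)^{2}\bs v = 0$, so $(A-aI)\bs v\in\Ker(A-aI)$. Since every element of the column space of $A-aI$ arises as $(A-aI)\bs v$ for some $\bs v$, the column space is contained in the kernel.

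There is no real obstacle here; the only small thing to be careful about is justifying the identity $(A-aI)^{2}=0$ from the Jordan pivot definition, which in the original definition was stated with eigenvalue $1$ but extends verbatim to eigenvalue $a$ via conjugation. I would simply note that the canonical form $\begin{bmatrix}a&&\\&a&1\\&&a\end{bmatrix}$ satisfies $(A-aI)^{2}=0$ by direct computation, and conjugation-invariance of this identity finishes the reduction.
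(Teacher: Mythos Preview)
Your proposal is correct and matches the paper's own proof, which is the single line ``This is because $(A-aI)^2$ is zero by the definition of a Jordan pivot matrix.'' You have simply spelled out the implication $(A-aI)^2=0 \Rightarrow \mathrm{Im}(A-aI)\subseteq\Ker(A-aI)$ in more detail, and correctly noted the minor point that the definition with eigenvalue $1$ extends to eigenvalue $a$ via the canonical form.
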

\begin{proof}
This is because $(A-aI)^2$ is zero by the definition of a Jordan pivot matrix.
\end{proof}

\begin{lem}
\label{Lem:n3LLPJordanPivotEdge}
In the reduced power graph of $\PGL_3(\F_q)$, if the image of an LLP matrix $A$ and the image of a Jordan pivot matrix $B$ are connected by an edge. Suppose $A$ has simple eigenvalue $a_1\in\F_q^*$ and repeated eigenvalue $a_2\in\F_q^*$, and $B$ has an eigenvalue $b\in\F_q^*$. Then the column space of $B-bI$ is the eigenspace of $A$ for $a_2$, and the kernel of $B-bI$ is spanned by the two eigenspaces of $A$.
\end{lem}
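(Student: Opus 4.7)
The plan is to exploit the edge condition by writing one matrix as a scalar multiple of a power of the other, then reduce everything to an explicit calculation in a Jordan basis of $A$, in the same spirit as Lemma~\ref{Lem:n3LLPPivotEdge}.

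First I would rule out the possibility that $xA = B^k$ for some positive integer $k$ and some $x\in\F_q^*$. Since $B$ is a Jordan pivot matrix with sole eigenvalue $b$, every power $B^k$ has $b^k$ as its only eigenvalue, so $xA$ would have only one eigenvalue. But $A$ is LLP with two distinct eigenvalues $a_1\neq a_2$, a contradiction. Hence the edge must come from $xB = A^k$ for some positive integer $k$ and some $x\in\F_q^*$.

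Next, I would choose an invertible $Y$ with columns $\bs y_1,\bs y_2,\bs y_3$ realizing $A = Y\begin{bmatrix}a_1&&\\&a_2&1\\&&a_2\end{bmatrix}Y^{-1}$, so that
\[
A^k = Y\begin{bmatrix}a_1^k&&\\&a_2^k&ka_2^{k-1}\\&&a_2^k\end{bmatrix}Y^{-1}.
\]
For $x^{-1}A^k$ to be a Jordan pivot matrix, the three diagonal entries must be equal (forcing $a_1^k = a_2^k$) and the off-diagonal entry $ka_2^{k-1}$ must be nonzero; otherwise $x^{-1}A^k$ would either be a scalar multiple of identity or an LLP/pivot matrix. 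The eigenvalue of $B$ is then $b = x^{-1}a_2^k$, and
\[
B - bI = x^{-1}ka_2^{k-1}\, Y\begin{bmatrix}0&&\\&0&1\\&&0\end{bmatrix}Y^{-1}.
\]
From this explicit form one reads off that the column space of $B-bI$ equals $\mathrm{span}(\bs y_2)$ and its kernel equals $\mathrm{span}(\bs y_1,\bs y_2)$. Comparing with the Jordan form of $A$, the eigenspace of $A$ for $a_1$ is $\mathrm{span}(\bs y_1)$, and the eigenspace for $a_2$ is $\mathrm{span}(\bs y_2)$ (the eigenvector at the top of the $a_2$-block). Thus the column space of $B-bI$ is exactly the $a_2$-eigenspace of $A$, and the kernel of $B-bI$ is the span of both eigenspaces of $A$, which is what the lemma asserts.

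The main obstacle is not deep; it is simply verifying that only the direction $xB = A^k$ can occur, which pins down how to compare the Jordan coordinates of $A$ and $B$. Once this direction is fixed, the remainder is a routine computation with the $2\times 2$ Jordan block in the middle of the normal form, entirely parallel to the proof of Lemma~\ref{Lem:n3LLPPivotEdge}.
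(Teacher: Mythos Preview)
Your argument is correct and follows essentially the same route as the paper's own proof: both pass to a Jordan basis for $A$, compute $A^k$ explicitly, and read off the column space and kernel of $B-bI$ from the resulting matrix. The only difference is that you spell out why the edge must come from $xB=A^k$ rather than $xA=B^k$, whereas the paper simply asserts this direction (implicitly relying on the connection diagrams).
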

\begin{proof}
If the image of an LLP matrix $A$ and the image of a Jordan pivot matrix $B$ are connected by an edge, then a power of $A$ must be $xB$ for some $x\in\F_q^*$. 

Since $A$ is an LLP matrix, $A=X\begin{bmatrix}a_1&&\\&a_2&1\\&&a_2\end{bmatrix}X^{-1}$ for some invertible $X$. Then $B=X\begin{bmatrix}x^{-1}a_1^k&&\\&x^{-1}a_2^k&x^{-1}ka_2^{k-1}\\&&x^{-1}a_2^k\end{bmatrix}X^{-1}$ for some positive integer $k$. Since $B$ is a Jordan pivot matrix with eigenvalue $b$, therefore all three diagonal entries here must be $b$.

Now the column space of $B-bI$ and the eigenspace of $A$ for $a_2$ are both the span of the second column of $X$. The two eigenspaces of $A$ are the span of the first column of $X$ and the span of the second column of $X$, while the kernel of $B-bI$ is spanned by the first two columns of $X$. So we are done.
\end{proof}

\begin{lem}
\label{Lem:n3NPJJordanPivotEdge}
In the reduced power graph of $\PGL_3(\F_q)$, if the image of an NPJ matrix $A$ and the image of a Jordan pivot matrix $B$ are connected by an edge. Suppose $A$ has an eigenvalue $a\in\F_q^*$ and $B$ has an eigenvalue $b\in\F_q^*$. Then the column space of $B-bI$ is the eigenspace of $A$ for $a$, and the kernel of $B-bI$ is the same as the kernel of of $(A-aI)^2$.
\end{lem}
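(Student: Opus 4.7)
The approach is to analyze the algebraic edge relation directly and then read off the two subspaces from the Jordan canonical form of $A$. An edge between the images means either $xA = B^k$ or $xB = A^k$ for some $x \in \F_q^*$ and some positive integer $k$. First I would rule out $xA = B^k$: since $(B - bI)^2 = 0$ by definition of a Jordan pivot matrix, the expansion
\[
B^k = b^k I + k b^{k-1}(B - bI)
\]
shows that $B^k - b^k I$ always has rank at most $1$. Thus $xA = B^k$ would force some $A - cI$ to have rank at most $1$, contradicting the fact that $A$ being an NPJ matrix makes the minimum of $\mathrm{rank}(A - cI)$ equal to $2$, attained uniquely at $c = a$. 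So the edge must arise from $xB = A^k$.

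Next, place $A$ in canonical NPJ form and set $M := A - aI$, the standard Jordan nilpotent of size $3$, satisfying $M^3 = 0$ and $M^2 \neq 0$. Since $I$ commutes with $M$, the binomial theorem gives
\[
A^k = a^k I + k a^{k-1} M + \binom{k}{2} a^{k-2} M^2.
\]
Matching eigenvalues of $A^k$ and $xB$ forces $xb = a^k$, so subtracting the scalar part yields
\[
x(B - bI) = k a^{k-1} M + \binom{k}{2} a^{k-2} M^2.
\]

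The main step is to argue that the rank-$1$ hypothesis on $B - bI$ kills the coefficient of $M$. In the canonical basis, $M$ has rank $2$ while $M^2$ has rank $1$ with column space contained in that of $M$; a direct inspection shows that $cM + dM^2$ has rank $2$ whenever $c \neq 0$, regardless of $d$. Since $x(B - bI)$ has rank exactly $1$ (and is nonzero, because $B$ is not a scalar multiple of identity), we must have $k a^{k-1} = 0$ in $\F_q$ and $\binom{k}{2} a^{k-2} \neq 0$. Hence $x(B - bI)$ is a nonzero scalar multiple of $M^2 = (A - aI)^2$. The conclusion is then immediate: the column space of $B - bI$ equals the column space of $(A - aI)^2$, which for a single size-$3$ Jordan block is the one-dimensional eigenspace of $A$ for $a$ (both are spanned by the first basis vector in canonical form); and the kernel of $B - bI$ equals the kernel of $(A - aI)^2$. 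The only place where care is needed is the rank count for $cM + dM^2$, but this becomes transparent once $A$ is in canonical form.
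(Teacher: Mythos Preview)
Your argument is correct and follows essentially the same route as the paper: put $A$ in NPJ canonical form, expand $A^k$ via the binomial theorem, and read off the column space and kernel of $B-bI$ from the resulting matrix. The only differences are that you explicitly rule out the direction $xA=B^k$ by a rank count (the paper silently uses the classification of powers of Jordan pivot matrices for this), and you deduce $ka^{k-1}=0$ from the rank-$1$ hypothesis rather than asserting ``$k$ must be even'' as the paper does; the latter phrasing is really specific to characteristic $2$, so your version is in fact the cleaner one.
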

\begin{proof}
If the image of an NPJ matrix $A$ and the image of a Jordan pivot matrix $B$ are connected by an edge, then $xB=A^k$ for some $x\in\F_q^*$ and some positive integer $k$, and $k$ must be an even number. 

Since $A$ is an NPJ matrix, $A=X\begin{bmatrix}a&1&\\&a&1\\&&a\end{bmatrix}X^{-1}$ for some invertible $X$. Then $B=X\begin{bmatrix}x^{-1}a^k&&\frac{k}{2}(k-1)a^{k-2}\\&x^{-1}a^k&\\&&x^{-1}a_2^k\end{bmatrix}X^{-1}$. Since $B$ is a Jordan pivot matrix with an eigenvalue $b$, therefore all three diagonal entries here must be $b$.

Now the column space of $B-bI$ and the eigenspace of $A$ for $a$ are both the span of the first column of $X$. The kernel of $B-bI$ and the kernel of $(A-aI)^2$ are both the span of the first two columns of $X$. So we are done.
\end{proof}

\begin{lem}
\label{Lem:n3JordanPivotJordanPivotPath}
In the reduced power graph of $\PGL_3(\F_q)$, suppose the image of a Jordan pivot matrix $A$ is connected to the image of a Jordan pivot matrix $B$ by an path that does not include any pivot matrix. Suppose $A$ has an eigenvalue $a\in\F_q^*$ and $B$ has an eigenvalue $b\in\F_q^*$. Then the column space of $A-aI$ is the same as the column space of $B-bI$, and the kernel of $A-aI$ is the same as the kernel of of $B-bI$.
\end{lem}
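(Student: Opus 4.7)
The plan is to reduce to the case where no Jordan pivot matrix appears strictly between $A$ and $B$ on the path, and then leverage the type restrictions given by the connection diagrams together with Lemmas~\ref{Lem:n3SameTypeEdge}, \ref{Lem:n3LLPJordanPivotEdge}, and \ref{Lem:n3NPJJordanPivotEdge}. The reduction is immediate: if some other Jordan pivot matrix $C$ (with eigenvalue $c$) lies on the path, then the subpaths from $A$ to $C$ and from $C$ to $B$ both avoid pivot matrices, so by induction on path length the column spaces and kernels match at the joints and propagate across.

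So assume no Jordan pivot matrix lies strictly between $A$ and $B$. Inspecting Figures~\ref{Fig:n3q-1NotPowerJordanConn}--\ref{Fig:n3q=2JordanConn}, the only vertices adjacent to a Jordan pivot matrix in the pivot component are pivot matrices, LLP matrices, NPJ matrices (edge present only when $q$ is even), and other Jordan pivot matrices. Since pivots and interior Jordan pivots are excluded, every intermediate vertex on the path is an LLP matrix or an NPJ matrix. Moreover, the diagrams show no edge directly connecting an LLP matrix to an NPJ matrix, so either the intermediate vertices are all LLP, or they are all NPJ (the latter forcing $q$ even). The degenerate case where the path is a single edge $A$--$B$ is handled directly: writing $xB = A^k$ and expanding $A^k = a^k I + k a^{k-1}(A-aI)$ using $(A-aI)^2 = 0$, one reads off that $xb = a^k$ and $B-bI = (ka^{k-1}/x)(A-aI)$, from which Col and Ker agree.

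In the ``all LLP'' case, apply Lemma~\ref{Lem:n3SameTypeEdge} to consecutive LLP matrices along the path: all of them share identical eigenspaces and generalized eigenspaces. In particular, the eigenspace for the repeated eigenvalue is distinguished as the unique eigenspace contained in the $2$-dimensional generalized eigenspace, so it is common to all the LLP matrices on the path. Now apply Lemma~\ref{Lem:n3LLPJordanPivotEdge} to the LLP neighbour of $A$ and to the LLP neighbour of $B$: the first yields that Col$(A-aI)$ equals this common eigenspace and Ker$(A-aI)$ equals the common span of the two eigenspaces, and the second gives the same identification for Col$(B-bI)$ and Ker$(B-bI)$. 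The two pairs therefore coincide.

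The ``all NPJ'' case (in characteristic $2$) is entirely parallel: by Lemma~\ref{Lem:n3SameTypeEdge}, consecutive NPJ matrices have identical invariant subspaces, so the unique $1$-dimensional invariant subspace (the eigenspace) and the unique $2$-dimensional invariant subspace (the kernel of $(X-\lambda I)^2$) are common to all NPJ vertices on the path. Applying Lemma~\ref{Lem:n3NPJJordanPivotEdge} at both ends matches Col$(A-aI)$ and Col$(B-bI)$ with the common eigenspace, and Ker$(A-aI)$ and Ker$(B-bI)$ with the common kernel of $(X-\lambda I)^2$, completing the proof. The only mildly subtle point is checking that LLP and NPJ cannot alternate along the path and that the ``repeated eigenvalue'' eigenspace of an LLP matrix is invariantly pinned down by the generalized-eigenspace data; both are straightforward once the connection diagrams and Jordan-type data are inspected.
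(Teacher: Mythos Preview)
Your proof is correct and follows essentially the same approach as the paper: reduce to the case with no intermediate Jordan pivot, use the connection diagrams to see that the intermediate vertices are all LLP or all NPJ, and then apply Lemmas~\ref{Lem:n3SameTypeEdge}, \ref{Lem:n3LLPJordanPivotEdge}, and \ref{Lem:n3NPJJordanPivotEdge} at the two ends. Your write-up is in fact slightly more careful than the paper's in two places: you handle the degenerate single-edge case $A$--$B$ explicitly, and you spell out why the repeated-eigenvalue eigenspace of an LLP matrix is unambiguously determined by the generalized-eigenspace data (a point the paper's proof leaves implicit when it passes from ``same eigenspaces'' to ``same column space'').
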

\begin{proof}
WLOG we can assume that there is no otherJordan pivot matrices on this path other than $A$ and $B$. By checking all diagrams from Figure~\ref{Fig:n3q-1NotPowerJordanConn} to Figure~\ref{Fig:n3q=2JordanConn}, all matrices in this path between $A$ and $B$ must have the same Jordan type. They must all be LLP matrices, or all be NJP matrices.

Suppose they are all LLP matrices. Then by Lemma~\ref{Lem:n3SameTypeEdge}, since all of them are connected by edges, they all have the same eigenspaces. By Lemma~\ref{Lem:n3LLPJordanPivotEdge}, the column space and the kernel of $A-aI$ and the column space and the kernel of $B-bI$ are determined by these same spaces. So we are done.

Suppose they are all NJP matrices. Then by Lemma~\ref{Lem:n3SameTypeEdge}, since all of them are connected by edges, they all have the same invariant subspaces. By Lemma~\ref{Lem:n3NPJJordanPivotEdge}, the column space and the kernel of $A-aI$ and the column space and the kernel of $B-bI$ are determined by these same spaces. So we are done.
\end{proof}

Now we investigate paths between a pivot matrix and a Jordan pivot matrix. We first make an important definition.

\begin{defn}
Let $A$ be a pivot matrix and $B$ be a Jordan pivot matrix with eigenvalue $b$. We say $A,B$ are compatible if the two dimensional eigenspace of $A$ contains the column space of $B-bI$, and the kernel of $B-bI$ contains the one dimensional eigenspace of $A$.
\end{defn}

\begin{lem}
\label{Lem:n3PivotJordanPivotPath}
Let $q\neq 2$ be a prime power such that $q-1$ is also a prime power. In the reduced power graph of $\PGL_3(\F_q)$, suppose the image of a pivot matrix $A$ is connected to the image of a Jordan pivot matrix $B$ by an path. Suppose either $A$ is the only pivot matrix on this path, or $B$ is the only Jordan pivot matrix on this path. Then $A,B$ are compatible to each other.
\end{lem}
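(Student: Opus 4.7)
The plan is to first reduce to a ``core'' case and then analyze the structure of the path. In the first branch of the hypothesis, where $A$ is the only pivot matrix on the path, I would let $B'$ be the first Jordan pivot matrix encountered on the path starting from $A$. The sub-path from $B'$ to $B$ then contains no pivot matrix, so Lemma~\ref{Lem:n3JordanPivotJordanPivotPath} transfers the column space and kernel of $B' - b'I$ to those of $B - bI$, reducing the problem to compatibility of $A$ with $B'$ (a sub-path on which $A$ is the only pivot and $B'$ is the only Jordan pivot). In the second branch, where $B$ is the only Jordan pivot, I would let $A'$ be the last pivot matrix before $B$. Then the sub-path from $A$ to $A'$ contains no Jordan pivot, and Lemma~\ref{Lem:n3q9FermatPivotPivotPath} (whose hypothesis $q-1$ a prime power is exactly what we are given) shows $A$ and $A'$ share the same eigenspaces, again reducing to the core case.

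Next, I would pin down the types of the intermediate vertices on a core path $A = Y_0, Y_1, \ldots, Y_k = B$. By the connection diagrams that cover all our cases (Figures~\ref{Fig:n3q-1OddPowerJordanConn}, \ref{Fig:n3q-1EvenPowerJordanConn}, and \ref{Fig:n3q=3JordanConn}), a pivot matrix is never directly adjacent to a Jordan pivot matrix, so $k \geq 2$. Each intermediate $Y_i$ is neither pivot nor Jordan pivot, hence of type LLL, LP, LLP, or NPJ. From the figures, only LLP and (in even characteristic) NPJ matrices can be adjacent to a Jordan pivot. If $Y_{k-1}$ were NPJ, then inducting backwards along the path --- since NPJ matrices only border other NPJ matrices and Jordan pivots --- every $Y_i$ would have to be NPJ, but then $Y_1$ could not be adjacent to the pivot $A$. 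Hence $Y_{k-1}$ is LLP, and since the only non-pivot non-Jordan-pivot neighbors of an LLP matrix are themselves LLP, a straightforward induction shows every $Y_i$ for $1 \leq i \leq k-1$ is LLP.

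Finally, I would use Lemma~\ref{Lem:n3SameTypeEdge} to conclude that the intermediate LLP matrices all share a common 1-dimensional eigenspace $V_s$ for the simple eigenvalue, a common 1-dimensional eigenspace $V_r$ for the repeated eigenvalue, and a common 2-dimensional generalized eigenspace $W \supseteq V_r$. Lemma~\ref{Lem:n3LLPPivotEdge} applied to the edge $A \leftrightarrow Y_1$ identifies the 1-dimensional eigenspace of $A$ as $V_s$ and the 2-dimensional eigenspace of $A$ as $W$. Lemma~\ref{Lem:n3LLPJordanPivotEdge} applied to the edge $Y_{k-1} \leftrightarrow B$ identifies the column space of $B - bI$ as $V_r$ and the kernel of $B - bI$ as $V_s + V_r$. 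The compatibility conditions are then immediate: $W \supseteq V_r$ by the very definition of a generalized eigenspace, and $V_s + V_r \supseteq V_s$ trivially.

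I expect the main obstacle to be the intermediate-type classification step, in particular ruling out NPJ matrices on the interior of the path in the case where $q$ is even and $q-1$ is prime (so NPJ matrices genuinely can reach Jordan pivots). The payoff is that all intermediate vertices must be LLP, after which the rest of the argument is just a clean chaining of the eigenspace-tracking lemmas already established.
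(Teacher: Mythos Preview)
Your proposal is correct and follows essentially the same approach as the paper's proof: reduce via Lemmas~\ref{Lem:n3q9FermatPivotPivotPath} and~\ref{Lem:n3JordanPivotJordanPivotPath} to the case where $A$ is the only pivot and $B$ the only Jordan pivot, deduce that all intermediate vertices are LLP with common invariant subspaces, and then read off compatibility from Lemmas~\ref{Lem:n3LLPPivotEdge} and~\ref{Lem:n3LLPJordanPivotEdge}. The only difference is that you spell out in detail the argument that the intermediate vertices must all be LLP (in particular, the backwards induction ruling out NPJ when $q$ is even), whereas the paper simply asserts this in one line.
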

\begin{proof}
Using Lemma~\ref{Lem:n3q9FermatPivotPivotPath} and Lemma~\ref{Lem:n3JordanPivotJordanPivotPath}, we can WLOG assume that $A$ is the only pivot matrix on this path, and $B$ is the only Jordan pivot matrix on this path. Then all other matrices on this path must be LLP matrices, and by Lemma~\ref{Lem:n3SameTypeEdge}, they must all have the same invariant subspaces. 

Suppose $C$ is an LLP matrix on this path with simple eigenvalue $c_1$ and repeated eigenvalue $c_2$. Let $L_1$ be the eigenspace of $C$ for $c_1$, $L_2$ be the eigenspace of $C$ for $c_2$, and $P$ be the generalized eigenspace of $C$ for $c_2$. By Lemma~\ref{Lem:n3LLPPivotEdge}, the eigenspaces for $A$ must be $L_1$ and $P$, while the column space of $B-bI$ is $L_2$, and the kernel of $B-bI$ is spanned by $L_1$ and $L_2$. It is easy to see now that $A,B$ are compatible to each other.
\end{proof}

\begin{lem}
\label{Lem:n3PivotPivotNotCompatiblePath}
Let $q\neq 2$ be a prime power such that $q-1$ is also a prime power. Let $A,B\in\GL_3(\F_q)$ be pivot matrices, such that the one dimensional eigenspace of $B$ is contained in the two dimensional eigenspace of $A$. Then in the reduced power graph of $\PGL_3(\F_q)$, their images have distance at least $8$ to each other.
\end{lem}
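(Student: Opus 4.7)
The plan is to argue by contradiction. Suppose the images of $A$ and $B$ are connected by a path of length at most $7$ in the reduced power graph of $\PGL_3(\F_q)$, and extract from this path the subsequence of its vertices whose matrices are pivot ($P$) or Jordan pivot ($J$), obtaining a compressed sequence starting and ending in $P$. A key initial observation is that no pivot matrix is directly connected by an edge to a Jordan pivot matrix: an equation $xB=A^k$ with $A$ pivot forces $xB$ to be diagonalizable, contradicting $B$ being Jordan pivot; and an equation $xA=B^k$ forces $xA$ to be either scalar or a Jordan pivot, again a contradiction. Hence every $P$-$J$ or $J$-$P$ transition in the compressed sequence corresponds to a sub-path of length at least $2$, while every same-type transition corresponds to length at least $1$. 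A short enumeration then shows that within path length $7$ the compressed sequence must be either all $P$'s or of the shape $P^a J^b P^c$ with $a,b,c\geq 1$ and $a+b+c\leq 6$; any sequence with two or more $J$-blocks already requires length at least $8$.

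If the compressed sequence is all $P$, then the sub-path from $A$ to $B$ contains no Jordan pivot, so Lemma~\ref{Lem:n3q9FermatPivotPivotPath} forces $A$ and $B$ to have identical eigenspaces, giving $L_B=L_A\subseteq W_A$, which contradicts $L_A\oplus W_A=\F_q^3$. In the case $P^a J^b P^c$, I would collect block-wise invariants: Lemma~\ref{Lem:n3q9FermatPivotPivotPath} shows that all pivots in the first (resp.\ last) $P$-block share an eigenspace pair $(L_A,W_A)$ (resp.\ $(L_B,W_B)$); Lemma~\ref{Lem:n3JordanPivotJordanPivotPath} shows that all Jordan pivots in the middle block share a common column-space/kernel pair $(V_1,V_2)$, with $V_1\subseteq V_2$ by Lemma~\ref{Lem:n3JordanPivotLinePlane}. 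At each of the two interfaces between a $P$-block and the $J$-block, the relevant extremal pivot is the only pivot and the relevant extremal Jordan pivot is the only Jordan pivot on the corresponding sub-path, so Lemma~\ref{Lem:n3PivotJordanPivotPath} applies and yields the compatibilities
\[
V_1\subseteq W_A\cap W_B,\qquad L_A,\,L_B\subseteq V_2.
\]

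To finish I would combine these containments with the hypothesis $L_B\subseteq W_A$. If $L_A=L_B$ we obtain $L_A\subseteq W_A$, contradicting $L_A\cap W_A=0$; so $L_A\neq L_B$ and $L_A+L_B$ is $2$-dimensional, hence equals $V_2$. Using $L_B\subseteq W_A$ and $L_A\cap W_A=0$, an elementary computation gives $W_A\cap V_2=L_B$, so the containments $V_1\subseteq W_A$ and $V_1\subseteq V_2$ force $V_1=L_B$. But the compatibility $V_1\subseteq W_B$ then gives $L_B\subseteq W_B$, contradicting $L_B\cap W_B=0$. The main technical obstacle is the combinatorial enumeration of admissible compressed sequences under the length-$7$ bound, together with the bookkeeping needed to verify that the hypotheses of Lemmas \ref{Lem:n3q9FermatPivotPivotPath}, \ref{Lem:n3JordanPivotJordanPivotPath} and \ref{Lem:n3PivotJordanPivotPath} are in force at each block boundary.
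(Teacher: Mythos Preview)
Your proof is correct and follows essentially the same approach as the paper: argue by contradiction, use the structural lemmas to reduce to a single Jordan-pivot segment flanked by pivot segments, and derive a linear-algebra contradiction from the compatibility constraints. The paper streamlines your compressed-sequence enumeration by directly taking the first and last Jordan pivot $A',B'$ on the path and observing that the segment between them (length $\leq 3$) is too short to contain a pivot; your final contradiction (forcing $V_1=L_B$ and hence $L_B\subseteq W_B$) is a minor variant of the paper's (forcing $V_2=W_A$ and hence $L_A\subseteq W_A$), and both are valid.
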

\begin{proof}
Suppose for contradiction that there is a path between them with distance at most $7$. Let the eigenspaces of $A$ be $L_A,P_A$ and the eigenspaces of $B$ be $L_B$ and $P_B$, where $L_A,L_B$ are one dimensional and $P_A$ and $P_B$ are two dimensional. Since $P_A$ cannot contain $L_A$, yet it contains $L_B$, therefore $A$ and $B$ do not have the same eigenspaces. So by Lemma~\ref{Lem:n3q9FermatPivotPivotPath}, there must be a Jordan pivot matrix on this path.

Let $A'$ be the first Jordan pivot matrix on this path starting from $A$, and let $B'$ be the last Jordan pivot matrix on this path. By Figure~\ref{Fig:n3q-1OddPowerJordanConn}, Figure~\ref{Fig:n3q-1EvenPowerJordanConn} and Figure~\ref{Fig:n3q=3JordanConn}, the distance between a pivot matrix and a Jordan pivot matrix is at least $2$. Since our path has total distance $7$, the distance between $A'$ and $B'$ is at most $3$. In particular, the path from $A'$ to $B'$ cannot contain a pivot matrix. If $A'$ has eigenvalue $a$ and $B'$ has eigenvalue $b$, then by Lemma~\ref{Lem:n3JordanPivotJordanPivotPath}, $A'-aI$ and $B'-bI$ will have the same column space and same kernel. Let the common column space be $L$ and the common kernel be $P$.

Since $A$ has a path to $A'$ on which $A'$ is the only Jordan pivot matrix, $A$ and $A'$ are compatible by Lemma~\ref{Lem:n3PivotJordanPivotPath}. Similarly, $B$ and $B'$ are compatible. So $L\subseteq P_A\cap P_B$, and therefore $L\neq L_A,L_B$. Now, since $L$ and $L_B$ are two different one dimensional subspaces inside $P_A$, $P_A$ must be the span of them. But similarly, $L_B$ must be inside $P$, and by Lemma~\ref{Lem:n3JordanPivotLinePlane}, $L$ must also be inside $P$. Hence $P$ is also the span of $L$ and $L_B$, and this means $P=P_A$. Then $P$ cannot contains $L_A$, contradiction. So we are done.
\end{proof}

\begin{lem}
\label{Lem:n3PivotJordanPivotNotCompatiblePath}
Let $q\neq 2$ be a prime power such that $q-1$ is also a prime power. Let $A\in\GL_3(\F_q)$ be a pivot matrix and let $B\in\GL_3(\F_q)$ be a Jordan pivot matrix with eigenvalue $b$, such that the two dimensional eigenspace of $A$ is the same as the kernel of $B-bI$. Then in the reduced power graph of $\PGL_3(\F_q)$, their images have distance at least $10$ to each other.
\end{lem}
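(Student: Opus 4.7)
The plan is to reduce the claim to Lemma~\ref{Lem:n3PivotPivotNotCompatiblePath} by extracting a suitable intermediate pivot matrix from any candidate short path, and invoking the compatibility machinery of Lemma~\ref{Lem:n3PivotJordanPivotPath}. First I would record that $A$ and $B$ are not compatible in the sense of the definition preceding Lemma~\ref{Lem:n3PivotJordanPivotPath}. Indeed, if $L_A$ and $P_A$ denote the one and two dimensional eigenspaces of $A$, then the hypothesis gives $P_A = \ker(B - bI)$, while $L_A \not\subseteq P_A$ because $A$ is a pivot matrix with two distinct eigenvalues. So the second clause of the compatibility condition fails.

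Next, I would assume for contradiction that the images of $A$ and $B$ are joined in the reduced power graph of $\PGL_3(\F_q)$ by a path of length at most $9$, which we may take to be a shortest such path. By Lemma~\ref{Lem:n3PivotJordanPivotPath}, since $A, B$ are not compatible, $A$ cannot be the only pivot matrix appearing on this path. Let $A'$ be the last pivot matrix occurring on the path; then $A' \neq A$, and on the sub-path from $A'$ to $B$ the only pivot matrix present is $A'$ itself. Applying Lemma~\ref{Lem:n3PivotJordanPivotPath} to this sub-path shows $A'$ and $B$ are compatible, so in particular the one dimensional eigenspace $L_{A'}$ of $A'$ satisfies $L_{A'} \subseteq \ker(B - bI) = P_A$.

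At this point Lemma~\ref{Lem:n3PivotPivotNotCompatiblePath} applies to the pair of pivot matrices $(A, A')$, with $A'$ playing the role of its ``$B$''; the hypothesis $L_{A'} \subseteq P_A$ is exactly what we just established. This gives that the distance between the images of $A$ and $A'$ is at least $8$. On the other hand, a pivot matrix and a Jordan pivot matrix can never be directly adjacent in the reduced power graph: a power of a pivot matrix is either a pivot matrix or a scalar, and a power of a Jordan pivot matrix is either a Jordan pivot matrix or a scalar; this is also visible from Figures~\ref{Fig:n3q-1OddPowerJordanConn}, \ref{Fig:n3q-1EvenPowerJordanConn}, and \ref{Fig:n3q=3JordanConn}. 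Hence the distance from the image of $A'$ to the image of $B$ is at least $2$. Adding these lower bounds gives a total distance of at least $10$, contradicting the standing assumption that the path has length at most $9$, and the lemma follows.

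The main obstacle is really just the bookkeeping needed to identify the right intermediate pivot matrix $A'$ and to verify that the sub-path from $A'$ to $B$ meets the hypothesis of Lemma~\ref{Lem:n3PivotJordanPivotPath}; once this is done, the inequality is just the sum of the lower bounds $8$ and $2$ coming from Lemmas~\ref{Lem:n3PivotPivotNotCompatiblePath} and the non-adjacency observation.
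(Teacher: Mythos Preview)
Your proof is correct and follows essentially the same route as the paper: pick the last pivot matrix $A'$ on the path (the paper calls it $B'$, defined as the first pivot matrix starting from $B$), use Lemma~\ref{Lem:n3PivotJordanPivotPath} on the sub-path to get $L_{A'}\subseteq\ker(B-bI)=P_A$, then invoke Lemma~\ref{Lem:n3PivotPivotNotCompatiblePath} for the $\geq 8$ and the non-adjacency of pivot and Jordan pivot matrices for the $\geq 2$. Your extra step verifying $A'\neq A$ via the incompatibility of $A,B$ is not strictly needed (it also follows immediately from $L_{A'}\subseteq P_A$ since $L_A\not\subseteq P_A$), but it does no harm.
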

\begin{proof}
Suppose there is a path from $A$ to $B$. Let $B'$ be the first pivot matrix on this path starting from $B$. By Lemma~\ref{Lem:n3PivotJordanPivotPath}, $B$ and $B'$ are compatible. So the one dimensional subspace of $B'$ is contained in the kernel of $B-bI$, which is the two dimensional eigenspace of $A$. By Lemma~\ref{Lem:n3PivotPivotNotCompatiblePath}, $A$ and $B'$ have distance at least $8$, while the path from $B'$ to $B$ has distance at least $2$ by Figure~\ref{Fig:n3q-1OddPowerJordanConn}, Figure~\ref{Fig:n3q-1EvenPowerJordanConn} and Figure~\ref{Fig:n3q=3JordanConn}. Therefore this path from $A$ to $B$ must have a distance at least $10$.
\end{proof}

\section{Lower bounds when $q-1$ is a prime power}

Let $q\neq 2$ be a power of a prime $p$, such that $q-1$ is also the power of a prime.

If $q\neq 3$ and $q-1$ is a power of $2$, $q+1$ cannot be a power of $2$. Pick any odd prime factor $p_0$ of $q+1$. If $q\neq 2$ is a power of $2$ and $q-1$ is a prime, then $q+1$ must be coprime to $q-1$. Pick any prime factor $p_0$ of $q+1$. Finally, if $q=3$, then set $p_0=q^2-1=8$. Either way, $p_0$ is a factor of $q^2-1$ but not a factor of $q-1$.

Let $C$ be a companion matrix to any irreducible polynomial over $\F_q$ of degree $2$, then $\F_q[C]$ is a field with $q^2$ elements. So we can find $C'\in\F_q[C]$ with multiplicative order $p_0$. Since $p_0$ cannot divide $q-1$, $C'$ must have irreducible characteristic polynomial. Set $A=\begin{bmatrix}C'&\\&1\end{bmatrix}$, then $A$ is an LP matrix.

\begin{lem}
\label{Lem:n3qPowerBadLPExample}
Let $q\neq 2,3$ be a power of a prime $p$. Suppose $q-1$ is a prime power. Then in the reduce power graph of $\PGL_3(\F_q)$, the image of $A$ has distance at least $2$ to the image of any pivot matrix.
\end{lem}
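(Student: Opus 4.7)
The goal is to show no pivot matrix is a graph-neighbor of the image of $A$ in the reduced power graph of $\PGL_3(\F_q)$. An edge means one of the two equations $A = yB^k$ or $B = yA^k$ holds for some $y\in\F_q^*$ and positive integer $k$. So the plan is to rule out both directions.

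First I would dispose of the direction ``$A$ is a scalar multiple of a power of a pivot matrix.'' A pivot matrix is diagonalizable over $\F_q$, hence so is any power and any scalar multiple of such a power. But $A$ is an LP matrix, so its characteristic polynomial has an irreducible quadratic factor, and in particular $A$ is not diagonalizable over $\F_q$. This contradiction handles this direction.

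Next I would analyze the direction ``a pivot matrix is a scalar multiple of a power of $A$.'' A direct computation gives $yA^k=\begin{bmatrix}y(C')^k&\\&y\end{bmatrix}$. Because $C'\in\F_q[C]$ has irreducible characteristic polynomial and multiplicative order $p_0$, and $p_0$ is prime in all relevant cases ($q=9$, $q$ a Fermat prime, or $q$ a power of $2$ with $q-1$ prime), I apply Corollary~\ref{Cor:CompanionPower}: $(C')^k$ is a scalar matrix iff $p_0/\gcd(p_0,k)$ divides $q-1$. Since $p_0$ is prime and does not divide $q-1$ by construction, the only way for $(C')^k$ to be a scalar is $p_0\mid k$, in which case $(C')^k = I$. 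So either $p_0\mid k$ and $yA^k=yI$ is a scalar matrix (not a pivot matrix), or $p_0\nmid k$ and $(C')^k$ still has irreducible characteristic polynomial, so $yA^k$ is itself an LP matrix (again not a pivot matrix).

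Combining both directions, the image of $A$ is not joined by an edge to the image of any pivot matrix, so the distance is at least $2$. The only subtle point in this proof is invoking Corollary~\ref{Cor:CompanionPower} with the correct observation that $p_0$ is prime in each subcase of the construction (which is why the hypothesis $q\neq 3$ is needed, since the $q=3$ construction uses $p_0=8$); no substantial obstacle is expected beyond being careful with this case distinction.
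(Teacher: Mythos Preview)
Your argument is correct and matches the paper's proof in structure: both directions are handled exactly as the paper does, with the paper condensing your first direction into the single phrase ``roots of scalar multiples of $A$ cannot be a pivot matrix'' and your second direction into ``by Corollary~\ref{Cor:CompanionPower}, powers of $A$ are either still LP matrices or scalar multiples of identity.'' Your explicit identification of why $q\neq 3$ is needed (so that $p_0$ is prime) is exactly the observation the paper opens with.
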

\begin{proof}
Since $q\neq 2,3$, $p_0$ is a prime number. So by Corollary~\ref{Cor:CompanionPower}, powers of $A$ are either still LP matrices or scalar multiples of identity. We can also see that roots of scalar multiples of $A$ cannot be a pivot matrix. So $A$ has distance at least $2$ to the image of any pivot matrix.
\end{proof}

\begin{prop}
Let $q\neq 3$ be a Fermat prime or $q=9$. Then the reduce power graph of $\PGL_3(\F_q)$ has diameter at least $12$. 
\end{prop}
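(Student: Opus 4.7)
The plan is to exhibit two LP matrices $A$ and $B$ in $\GL_3(\F_q)$ with deliberately incompatible invariant subspaces, and then to show that in the reduced power graph of $\PGL_3(\F_q)$ their images cannot be joined by a path of length less than $12$. For $A$ I use the LP matrix $\begin{bmatrix}C' & \\ & 1\end{bmatrix}$ constructed just before Lemma~\ref{Lem:n3qPowerBadLPExample}; its eigenline is $L_A = \mathrm{span}(\bs e_3)$ and its irreducible invariant plane is $P_A = \mathrm{span}(\bs e_1, \bs e_2)$. For $B$ I take $B = PAP^{-1}$ where $P$ is the permutation matrix swapping $\bs e_1$ and $\bs e_3$, so $B$ is an LP matrix with eigenline $L_B = \mathrm{span}(\bs e_1) \subset P_A$ and invariant plane $P_B = \mathrm{span}(\bs e_2, \bs e_3) \supset L_A$; in particular, $A$ and $B$ are in the pivot component but do not share invariant subspaces.

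Given any path from the image of $A$ to the image of $B$, I would factor it as $A \to \cdots \to A' \to \cdots \to B' \to \cdots \to B$, where $A'$ is the first pivot matrix appearing after $A$ and $B'$ is the last pivot before $B$. Lemma~\ref{Lem:n3qPowerBadLPExample} immediately gives that the distances from $A$ to $A'$ and from $B'$ to $B$ are each at least $2$. The next step is to pin down the eigenspaces of $A'$ and $B'$. By Figure~\ref{Fig:n3q-1OddPowerJordanConn}, in the pivot component for $q=9$ or $q$ a Fermat prime, the only neighbors of the image of an LP matrix are images of other LP matrices or of pivot matrices, because no matrix of type LLL, LLP, Jordan pivot, or NPJ can have a projective power equal to an LP matrix. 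Applying Lemma~\ref{Lem:n3SameTypeEdge} inductively along the intermediate LP segment, every LP matrix between $A$ and $A'$ shares the invariant subspaces $\{L_A, P_A\}$; then Lemma~\ref{Lem:n3LPPivotEdge} forces the eigenspaces of $A'$ to be exactly $\{L_A, P_A\}$. The symmetric argument gives that $B'$ has eigenspaces $\{L_B, P_B\}$.

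Since $L_{B'} = L_B \subseteq P_A = P_{A'}$, the two pivot matrices $A'$ and $B'$ satisfy the hypothesis of Lemma~\ref{Lem:n3PivotPivotNotCompatiblePath}, so the distance between their images is at least $8$. Summing the three segments yields $d(A,B) \geq 2 + 8 + 2 = 12$, matching the upper bound $12$ already established and proving the lower bound.

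The main obstacle I expect is the step asserting that, in the pivot component, no matrix of type LLL, LLP, Jordan pivot, or NPJ is a neighbor of an LP matrix. This amounts to carefully tracking the irreducible quadratic factor of the characteristic polynomial under taking powers and roots (up to scalar), and it is exactly what the pivot-component arrows in Figure~\ref{Fig:n3q-1OddPowerJordanConn} encode; so in practice the verification reduces to a quick diagram check together with Lemma~\ref{Lem:n3SameTypeEdge}, but this is the point at which a careful write-up is needed.
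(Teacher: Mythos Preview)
Your argument is correct and gives a slightly different route than the paper's. The paper takes $B$ to be the Jordan pivot matrix $\begin{bmatrix}1&&1\\&1&\\&&1\end{bmatrix}$, identifies the first pivot $A'$ on a path from $A$ to $B$, observes that the two-dimensional eigenspace of $A'$ coincides with the kernel of $B-I$, and then invokes Lemma~\ref{Lem:n3PivotJordanPivotNotCompatiblePath} to get $d(A',B)\geq 10$; together with $d(A,A')\geq 2$ from Lemma~\ref{Lem:n3qPowerBadLPExample} this yields $12$. You instead choose $B$ to be a second LP matrix, locate two pivot matrices $A',B'$ on the path, and apply Lemma~\ref{Lem:n3PivotPivotNotCompatiblePath} directly to get $d(A',B')\geq 8$, then add $2+2$ from Lemma~\ref{Lem:n3qPowerBadLPExample} applied at both ends. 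Your approach is more symmetric and slightly more elementary, since Lemma~\ref{Lem:n3PivotJordanPivotNotCompatiblePath} is itself derived from Lemma~\ref{Lem:n3PivotPivotNotCompatiblePath}; the paper's choice, on the other hand, only needs to control one end of the path and so avoids repeating the LP-neighbor analysis twice.

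Two small points worth tightening. First, you should state explicitly why a pivot matrix must occur on the path at all: since neighbors of LP matrices are only LP or pivot, an all-LP path would force $A$ and $B$ to share invariant subspaces by Lemma~\ref{Lem:n3SameTypeEdge}, contradicting your construction. Second, your ``because'' clause only addresses one direction (that no LLL, LLP, Jordan pivot, or NPJ matrix has an LP power); you also need that no power of an LP matrix has one of those types, which follows immediately from Lemma~\ref{Lem:CompanionPower}. Both are routine, but a clean write-up should include them.
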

\begin{proof}
Set $B=\begin{bmatrix}1&&1\\&1&\\&&1\end{bmatrix}$, so it is a Jordan pivot matrix. Suppose there is a path from $A$ to $B$. By Figure~\ref{Fig:n3q-1OddPowerJordanConn}, this path must contain a pivot matrix. 

Let $A'$ be the first pivot matrix on this path starting from $A$. Let $\bs e_1,\bs e_2,\bs e_3$ be the three columns of the identity matrix. By Lemma~\ref{Lem:n3LPPivotEdge}, the eigenspace of $A'$ must be the span of $\bs e_1$ and $\bs e_2$, and the span of $\bs e_3$. In particular, the two dimensional eigenspace of $A'$ is the same as the kernel of $B-I$. By Lemma~\ref{Lem:n3PivotJordanPivotNotCompatiblePath}, the images of $A'$ and $B$ has distance at least $10$.

By Lemma~\ref{Lem:n3qPowerBadLPExample}, the images of $A$ and $A'$ has distance at least $2$. So this path from $A$ to $B$ have distance at least $12$.
\end{proof}

\begin{prop}
Let $q\neq 2$ be a power of $2$, and $q-1$ is prime. Then the reduce power graph of $\PGL_3(\F_q)$ has diameter at least $13$. 
\end{prop}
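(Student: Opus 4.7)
The plan is to take $A$ (the LP matrix defined above) together with the NPJ matrix $B=\begin{bmatrix}1&1&\\&1&1\\&&1\end{bmatrix}$, and show their images in the reduced power graph of $\PGL_3(\F_q)$ have distance at least $13$. Note that $B$ has eigenspace $\langle\bs e_1\rangle$ and $\ker((B-I)^2)=\langle\bs e_1,\bs e_2\rangle$.

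First I would establish an NPJ analogue of Lemma~\ref{Lem:n3qPowerBadLPExample}: the image of $B$ has distance at least $3$ to the image of any pivot matrix. Every neighbor of $B$ in the reduced power graph is the image of a scalar multiple of a power or of a root of $B$, and each such matrix is either NPJ or a Jordan pivot; by Figure~\ref{Fig:n3q-1EvenPowerJordanConn}, neighbors of NPJ or Jordan pivot matrices stay within $\{\text{NPJ, Jordan pivot, LLP}\}$, so no pivot matrix lies within distance $2$ of the image of $B$. The same figure shows that removing the pivot matrices from the pivot component disconnects LP matrices from the other three types, so any path between the images of $A$ and $B$ must traverse some pivot matrix. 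Let $A'$ be the first such pivot matrix on the path starting from $A$ and $B'$ the last. Then by Lemma~\ref{Lem:n3qPowerBadLPExample} the image of $A$ has distance at least $2$ to the image of $A'$, while by the NPJ bound just established the image of $B$ has distance at least $3$ to the image of $B'$.

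The core step is to show that the images of $A'$ and $B'$ have distance at least $8$. On the sub-path from $A$ to $A'$, all intermediate matrices are LP (since LP matrices are only edge-connected to LP or pivot); by Lemma~\ref{Lem:n3SameTypeEdge} these LP matrices all share the invariant subspaces of $A$, and Lemma~\ref{Lem:n3LPPivotEdge} then forces the two-dimensional eigenspace of $A'$ to equal $\langle\bs e_1,\bs e_2\rangle$. On the sub-path from $B'$ to $B$, the intermediate matrices are of types NPJ, Jordan pivot, or LLP; combining Lemma~\ref{Lem:n3SameTypeEdge} within each type with Lemma~\ref{Lem:n3NPJJordanPivotEdge}, Lemma~\ref{Lem:n3LLPJordanPivotEdge}, and Lemma~\ref{Lem:n3LLPPivotEdge} across the type transitions, I would propagate the flag $\langle\bs e_1\rangle\subset\langle\bs e_1,\bs e_2\rangle$ along the whole sub-path, and conclude that the one-dimensional eigenspace of $B'$ must lie inside $\langle\bs e_1,\bs e_2\rangle$. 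Hence the one-dimensional eigenspace of $B'$ is contained in the two-dimensional eigenspace of $A'$, and Lemma~\ref{Lem:n3PivotPivotNotCompatiblePath} yields distance at least $8$ between their images. Adding up, the total distance between the images of $A$ and $B$ is at least $2+8+3=13$. The main obstacle is the invariant-subspace bookkeeping on the $B$-side sub-path, since the path may alternate among NPJ, Jordan pivot, and LLP types and loop back through Jordan pivot matrices multiple times before reaching $B'$; the key point is that each individual edge along this sub-path preserves the distinguished flag $\langle\bs e_1\rangle\subset\langle\bs e_1,\bs e_2\rangle$ in the precise sense dictated by the relevant lemma, so the needed constraint on $B'$ persists throughout.
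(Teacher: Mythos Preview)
Your proposal is correct and reaches the same conclusion as the paper, but via a slightly different decomposition of the path. The paper also uses the same LP matrix $A$ and the NPJ matrix $B=\begin{bmatrix}1&1&\\&1&1\\&&1\end{bmatrix}$, and the same first pivot $A'$ with two-dimensional eigenspace $\langle\bs e_1,\bs e_2\rangle$. The difference is on the $B$-side: the paper takes $B'$ to be the \emph{last Jordan pivot} matrix on the path. Since NPJ matrices only neighbor NPJ and Jordan pivot matrices, the sub-path from $B'$ to $B$ then consists entirely of NPJ matrices, and a single application of Lemma~\ref{Lem:n3SameTypeEdge} plus Lemma~\ref{Lem:n3NPJJordanPivotEdge} immediately gives $\ker(B'-bI)=\langle\bs e_1,\bs e_2\rangle$. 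The paper then invokes Lemma~\ref{Lem:n3PivotJordanPivotNotCompatiblePath} to get distance at least $10$ between $A'$ and $B'$, and adds $2$ from $A$ to $A'$ and $1$ from $B'$ to $B$, for a total of $13$.

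Your route instead takes $B'$ to be the last \emph{pivot}, proves a separate distance-$3$ bound from NPJ to pivot, and then must propagate the flag through a sub-path that may oscillate among LLP, Jordan pivot, and NPJ types before applying Lemma~\ref{Lem:n3PivotPivotNotCompatiblePath}. This works (the flag data $L_2=\langle\bs e_1\rangle$, $L_1+L_2=\langle\bs e_1,\bs e_2\rangle$ for LLP, respectively column space and kernel for Jordan pivot, is indeed preserved across each transition by Lemmas~\ref{Lem:n3LLPJordanPivotEdge} and~\ref{Lem:n3NPJJordanPivotEdge}, and the final LLP-to-pivot edge forces the one-dimensional eigenspace of $B'$ into $\langle\bs e_1,\bs e_2\rangle$), but the bookkeeping is heavier. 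The paper's choice of anchor point avoids this by pushing the complexity into the already-proved Lemma~\ref{Lem:n3PivotJordanPivotNotCompatiblePath}, which internally handles the LLP/Jordan-pivot bouncing once and for all.
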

\begin{proof}
Set $B=\begin{bmatrix}1&1&\\&1&1\\&&1\end{bmatrix}$, so it is an NPJ matrix. Suppose there is a path from $A$ to $B$. By Figure~\ref{Fig:n3q-1EvenPowerJordanConn}, this path must contain a pivot matrix and a Jordan pivot matrix. 

Let $A'$ be the first pivot matrix on this path starting from $A$. Let $\bs e_1,\bs e_2,\bs e_3$ be the three columns of the identity matrix. By Lemma~\ref{Lem:n3LPPivotEdge}, the eigenspace of $A'$ must be the span of $\bs e_1$ and $\bs e_2$, and the span of $\bs e_3$. 

Let $B'$ be the last Jordan pivot matrix on this path starting from $A$, and suppose it has an eigenvalue $b\in\F_q^*$. Let $\bs e_1,\bs e_2,\bs e_3$ be the three columns of the identity matrix. By Lemma~\ref{Lem:n3NPJJordanPivotEdge}, the kernel of $B'-bI$ must be the span of $\bs e_1$ and $\bs e_2$, and it is identical to the two dimensional eigenspace of $A'$. By Lemma~\ref{Lem:n3PivotJordanPivotNotCompatiblePath}, the images of $A'$ and $B'$ has distance at least $10$.

By Lemma~\ref{Lem:n3qPowerBadLPExample}, the images of $A$ and $A'$ has distance at least $2$. And since $B$ and $B'$ are not in the same Jordan type, their images have distance at least $1$. So this path from $A$ to $B$ have distance at least $13$.
\end{proof}

\begin{prop}
The reduce power graph of $\PGL_3(\F_3)$ has diameter at least $11$. 
\end{prop}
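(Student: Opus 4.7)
The plan is to exhibit two matrices in the pivot component whose images in the reduced power graph of $\PGL_3(\F_3)$ are at distance at least $11$. For $A$ I would take the LP matrix $\begin{bmatrix}C'&\\&1\end{bmatrix}$ already constructed above, where $C'\in\GL_2(\F_3)$ has multiplicative order $p_0=8$ and irreducible characteristic polynomial; in particular the invariant subspaces of $A$ are $\mathrm{span}(\bs{e_1},\bs{e_2})$ and $\mathrm{span}(\bs{e_3})$. For $B$ I would take the Jordan pivot matrix $\begin{bmatrix}1&0&1\\0&1&0\\0&0&1\end{bmatrix}$; since $B-I$ has rank one with image $\mathrm{span}(\bs{e_1})$ and $(B-I)^2=0$, the kernel $\ker(B-I)$ equals $\mathrm{span}(\bs{e_1},\bs{e_2})$.

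First I would analyze any path between the images of $A$ and $B$ in the reduced power graph. By Figure~\ref{Fig:n3q=3JordanConn}, an LP matrix has edges only to other LP matrices or to pivot matrices, so any path from the image of $A$ to the image of the Jordan pivot $B$ must pass through a pivot matrix. I would let $A'$ be the first pivot matrix encountered along the path starting from $A$; then every earlier matrix on the path is necessarily an LP matrix. By repeated application of Lemma~\ref{Lem:n3SameTypeEdge} along this LP segment, all the intermediate LP matrices share the invariant subspaces of $A$, namely $\mathrm{span}(\bs{e_1},\bs{e_2})$ and $\mathrm{span}(\bs{e_3})$. Then Lemma~\ref{Lem:n3LPPivotEdge}, applied to the final edge from an LP matrix to $A'$, forces the eigenspaces of $A'$ to be exactly these two subspaces.

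The remaining step is the distance bound: since the two-dimensional eigenspace of $A'$ coincides with $\ker(B-I)$, Lemma~\ref{Lem:n3PivotJordanPivotNotCompatiblePath} gives that the distance between the images of $A'$ and $B$ is at least $10$. Because $A$ and $A'$ have different Jordan types, their distance is at least $1$, and adding we get a total path length of at least $11$. Combined with the upper bound $11$ from the previous proposition, this pins down the diameter at exactly $11$.

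The subtlety I expect will need the most care is justifying that the segment from $A$ to the first pivot $A'$ consists purely of LP matrices, so that the chained use of Lemma~\ref{Lem:n3SameTypeEdge} and Lemma~\ref{Lem:n3LPPivotEdge} is legitimate. This follows from a short inspection of Figure~\ref{Fig:n3q=3JordanConn}, since LP matrices are not adjacent to LLP, NPJ, or Jordan pivot matrices in this graph, but it should be stated explicitly before invoking the lemmas.
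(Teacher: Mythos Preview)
Your proof is correct and follows essentially the same route as the paper's: the same choice of $A$ and $B$, the same first pivot matrix $A'$, the same application of Lemma~\ref{Lem:n3LPPivotEdge} to pin down the eigenspaces of $A'$, and the same invocation of Lemma~\ref{Lem:n3PivotJordanPivotNotCompatiblePath} for the distance-$10$ bound. If anything, you are more careful than the paper in explicitly chaining Lemma~\ref{Lem:n3SameTypeEdge} along the LP segment before applying Lemma~\ref{Lem:n3LPPivotEdge}; the paper simply states the conclusion about the eigenspaces of $A'$ directly.
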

\begin{proof}
Set $B=\begin{bmatrix}1&&1\\&1&\\&&1\end{bmatrix}$, so it is a Jordan pivot matrix. Suppose there is a path from $A$ to $B$. By Figure~\ref{Fig:n3q=3JordanConn}, this path must contain a pivot matrix. 

Let $A'$ be the first pivot matrix on this path starting from $A$. Let $\bs e_1,\bs e_2,\bs e_3$ be the three columns of the identity matrix. By Lemma~\ref{Lem:n3LPPivotEdge}, the eigenspace of $A'$ must be the span of $\bs e_1$ and $\bs e_2$, and the span of $\bs e_3$. In particular, the two dimensional eigenspace of $A'$ is the same as the kernel of $B-I$. By Lemma~\ref{Lem:n3PivotJordanPivotNotCompatiblePath}, the images of $A'$ and $B$ has distance at least $10$.

Since $A$ and $A'$ are not in the same Jordan type, their images have distance at least $1$. So this path from $A$ to $B$ have distance at least $11$.
\end{proof}

\bibliographystyle{alpha}
\bibliography{references}

\end{document}